\documentclass{amsart}
\usepackage{amsmath,amssymb,amsthm}
\usepackage{tikz-cd}
\usepackage{color}
\usepackage{url}

\newcommand{\acl}{\operatorname{acl}}

\newcommand{\alg}{\operatorname{alg}}

\newtheorem{theorem}{Theorem}[section]
\newtheorem{prop}[theorem]{Proposition}
\newtheorem{cor}[theorem]{Corollary}
\newtheorem{lemma}[theorem]{Lemma}

\newtheorem{example}[theorem]{Example}

\newtheorem{conv}[theorem]{Convention}
\newtheorem{nota}[theorem]{Notation}

\theoremstyle{definition}
\newtheorem{definition}[theorem]{Definition}

\theoremstyle{remark}
\newtheorem{remark}[theorem]{Remark}

\newcommand{\DCFA}{\operatorname{DCF}_{0,m}\operatorname{A}}
\newcommand{\DCF}{\operatorname{DCF}_{0,m}}

\newcommand{\ord}{\operatorname{ord}}
 
\def\Ind{\setbox0=\hbox{$x$}\kern\wd0\hbox to 0pt{\hss$\mid$\hss} \lower.9\ht0\hbox to 0pt{\hss$\smile$\hss}\kern\wd0}
\def\Notind{\setbox0=\hbox{$x$}\kern\wd0\hbox to 0pt{\mathchardef \nn=12854\hss$\nn$\kern1.4\wd0\hss}\hbox to 0pt{\hss$\mid$\hss}\lower.9\ht0 \hbox to 0pt{\hss$\smile$\hss}\kern\wd0}
\def\ind{\mathop{\mathpalette\Ind{}}}
\def\nind{\mathop{\mathpalette\Notind{}}}

\usepackage{amsmath}
\usepackage{graphicx}
\usepackage[colorlinks=true, allcolors=blue]{hyperref}

\title{Differential equations as difference-differential equations}

\author{Wei Li}
\address{State Key Laboratory of Mathematical Sciences, Academy of Mathematics and Systems Science, Chinese Academy of Sciences}
\email{liwei@mmrc.iss.ac.cn}

\author{Thomas Scanlon}
\address{University of California, Berkeley \\
Department of Mathematics \\
Evans Hall \\
Berkeley, CA 94720-3840 \\
USA}
\email{scanlon@math.berkeley.edu}

\begin{document}
\begin{abstract}
We analyze the behavior of systems of algebraic
differential equations when considered as systems of
difference-differential equations, with special emphasis 
on systems which define minimal sets relative
to the theory $\DCF$ of 
differentially closed fields of characteristic zero with
$m$ distinguished commuting derivations.  We show that if 
$p$ is a minimal type relative to 
$\DCF$ defined by a system
of algebraic partial differential equations and inequations and the 
forking geometry on $p$ is trivial (also called 
``disintegrated''), then $p$ remains minimal when 
regarded as a partial type relative to the theory $\DCFA$ of difference-differentially 
closed fields of characteristic zero with 
$m$ commuting derivations in the sense that every completion 
has $\operatorname{SU}$-rank one.  We illustrate this theorem 
by describing in detail the possible 
difference-differential equations consistent with 
differential equations of the form $y' = y^3 + a_2 y^2 + a_1 y + a_0$ where $a_0$, $a_1$, and $a_2$ are 
differential constants.
\end{abstract}
\maketitle

\section{Introduction}
As a rule of thumb, it is difficult for a function to satisfy both a 
differential equation and a difference equation.  Of course, ``difficult'' 
is not the same as ``impossible''.  
As a trivial example, if $f(x) \equiv c$ is 
a constant function, then it satisfies the differential equation $y' = 0$ and 
the difference equation $\sigma(y) = y$ where $\sigma(f)(x) = f(x+1)$.  
For a less trivial example, the function $f(x) = 2^x$ satisfies the difference 
equation $\sigma(y) = 2 y$ and the differential equation 
$y'' y = (y')^2$, or even $y' = (\ln 2) y$ if we permit ourselves 
$\ln 2$ as a parameter, where here we are writing \(y'\) for 
\(\frac{dy}{dx}\).

Precise forms of this 
principle are proven by Adamczewski, Dreyfus, and
Hardouin in~\cite{ADH-JAMS}.   To give a sense
of their results, let us indicate one formulation.
Let $K = \mathbb{C}(x)$ be the field of rational
functions in one variable over the complex numbers
and let $F$ be the field of meromorphic functions on
the complex plane.  We treat $K$ and $F$ as 
differential fields with the derivation 
$\partial = \frac{d}{dx}$ and as difference fields
with the distinguished automorphism 
$\sigma$ defined by $\sigma(f)(x) := f(x+h)$ for
some fixed nonzero complex number $h$. 
Let $F_0\subset F$ be any field extension of $\mathbb C(x)$ setwise fixed
by $\sigma$ in the sense that 
$\sigma(F_0) = F_0$ with $\{f\in F_0: \sigma(f)=f\}=\mathbb C$ and $F_0\cap \mathbb C(x, \{\operatorname{exp}(\lambda x):\lambda \in \mathbb C\})=\mathbb C(x)$.
They show that if $y \in F_0$ satisfies a nontrivial
linear difference equation with coefficients from 
$K$, that is, if there is some $n \in 
\mathbb{Z}_+$ and a sequence 
$a_0, \ldots, a_{n-1}$ of elements of $K$ for 
which $\sigma^n(y) + \sum_{i=0}^{n-1} a_i \sigma^i(y) = 0$, then either $y \in K$ or $y$ satisfies no
algebraic differential equation with coefficients 
from $K$.

These hypertranscendence results on solutions to linear difference 
equations depend on the choice of the difference-differential fields in 
which the solutions are taken in the sense that 
they would be false if we were to allow \(F\) to be 
an arbitrary difference-differential field extension of \(K\).  
Indeed, if $K$ is a field 
equipped with a derivation $\partial:K \to K$ and 
an endomorphism $\sigma:K \to K$ commuting with $\partial$, then 
for any 
positive integer $e > 0$, there is a difference-differential
field  extension $(M,\sigma,\partial) \supseteq (K,\sigma,\partial)$ and 
some $y \in M$ with $\sigma(y) = y $ and $y$ is a generic solution of a
differential equation of order $e$ with coefficients from $K$.  That is, linear difference 
equations are always \emph{consistent} with differential equations of 
arbitrarily high order, though the solutions witnessing the consistency might 
not be found in the standard difference-differential fields.   On the 
other hand, Bustamante has shown in~\cite{Bustamante-thesis} that 
any solution to the 
nonlinear difference equation $\sigma(y) = y^2 + 1$
which is transcendental over a difference-differential field $K$
and is differentially algebraic over $K$ 
must be constant.  Of course,
in the standard difference-differential fields in which the fixed field of 
$\sigma$ is the constant field of $\partial$, even this pair of equations
has only finitely many solutions. 

In this note, we consider a complementary problem: 
if we know that $y$ is a solution to a nontrivial,
possibly nonlinear, algebraic differential equation, 
what further algebraic difference, or even difference-differential, 
equations might it satisfy?    We answer this question in the 
case that the differential equations define a \emph{minimal set}
relative to the theory of differentially closed fields.  We recall the 
relevant definitions and background results in Section~\ref{sec:background}.  

From the theoretical point of view, the results of this paper may be understood
as elaborations on the following observation. 
 Let $K$ be a difference-differential field of characteristic zero equipped with $m$  derivations $\delta_1,\ldots,\delta_m$ and a distinguished ring automorphism $\sigma$,
 all commuting with each other.
Let $X$ be defined by a system 
of differential equations (and, if you like, inequations) over $K$.  If we now regard
$X$ as being defined by a system of difference-differential equations (in which, 
for now, the operator $\sigma$ does not appear), then we may understand 
all further difference-differential equations on solutions to $X$ by 
considering sets $Y$ defined by 
differential equations on the products $Y \subseteq X \times X^\sigma \times 
\cdots \times X^{\sigma^r}$ where for an automorphism 
$\tau$ we are writing $X^\tau$ for the set of solutions to the system of 
differential equations obtained by applying $\tau$ to the coefficients of the 
equations defining $X$.  

This kind of move should be familiar from the conversion 
of systems of algebraic differential equations to systems of purely algebraic 
equations by replacing expressions like $\delta_1^{\alpha_1} \cdots \delta_m^{\alpha_m} x_j$ with 
a new variable $x_{j;\alpha_1, \ldots, \alpha_m}$.  An even more basic instance may be seen 
when one converts a system of algebraic equations into a system of linear equations
by replacing monomials with new variables.   What makes our reduction from 
difference-differential
equations to differential equations so powerful is that, in some cases, 
the structure theory for sets defined by differential equations is so rigid that 
we may deduce that there are very few consistent systems of additional 
differential equations on the products $Y \subseteq X \times X^\sigma \times 
\cdots \times X^{\sigma^r}$.  

We work out this principle in the case that $X$ is minimal.  The 
trichotomy theorem for differentially closed fields \cite{Hrushovski-Sokolović,Pillay-Ziegler-arcs,LS-Zilber-dichotomy}
breaks this problem into 
three cases which essentially reduce to the cases that
\begin{itemize}
    \item $X$ is (related, technically, \emph{nonorthogonal}, to) the field of constants $\mathsf{C}$ 
defined by $\delta_1 x = 
\cdots = \delta_m x = 0$,
    \item $X$ is (nonorthogonal to) a so-called \emph{Manin kernel} defined by differential equations on  a simple abelian variety coming from the Gauss-Manin connection, or 
    \item $X$ is \emph{geometrically trivial},
    also called \emph{disintegrated}, 
    which means that if \(S\) is a set of solutions 
    from \(X\) and each pair of elements of \(S\) is independent, then
    \(S\) is independent as a set.
\end{itemize}

In the first case, if $X = \mathsf{C}$, when regarded as a system of difference-differential equations, then $X$ carries the structure of a difference field and admits a rich collection of further difference-differential equations.  
The nonorthogonality relation 
allows for some further complication, but we will see that in this first case, 
$X$ admits many further difference-differential equations.

The second case may sound the most complicated, but has the most easily described 
structure.  A Manin kernel $X$ carries the structure of a commutative group. 
In analogy to the description of subgroups of abelian varieties defined by 
difference equations given by Hrushovski~\cite[Section 4]{Hr-MM}  
we show that either there are no new difference-differential equations
consistent with $X$ or $X$ admits many new difference-differential 
equations in the sense that its generic type acquires Lascar rank $\omega$
relative to $\DCFA$ and this happens only in the presence of a definable group.

Despite its name, the most interesting case is the third case of a 
geometrically trivial set.   Suppose that $X$ is defined over the 
algebraically closed difference-differential field $K$.
We will observe that while there
may be some additional difference-differential equations consistent with the 
differential equations defining $X$ (in the sense that they could have 
infinitely many solutions) defined over $K$ itself, 
we will not find further consistent difference-differential equations 
defined over larger difference-differential field extensions.
In model theoretic language, our result is that $X$ remains \emph{minimal} as a partial type 
relative to the theory of existentially closed difference-differential fields.

In some cases, geometric triviality of $X$ may be upgraded to \emph{total disintegration} 
which means that distinct solutions of $X$ are necessarily independent.  Under this 
hypothesis on $X$, the possible difference-differential equations consistent with $X$
are extremely restricted, though some may exist.  

In Section~\ref{sec:examples-of-differential-equations},
we work out computational examples exhibiting these 
various phenomena.  For example, we determine the difference-differential equations 
consistent with the differential equations $y' = y^3 + a_2 y^2 + a_1 y + a_0$ where
$a_0$, $a_1$, and $a_2$ are constant parameters and
$y^3 + a_2 y^2 + a_1 y + a_0$ is irreducible over $\mathbb{Q}(a_0,a_1,a_2)$.  

\subsection*{Acknowledgments}
WL is partially supported by NSFC Grants (12671568, 12288201 and 12122118), the Strategic Priority Research Program of the Chinese Academy of Sciences under Grants XDA0480503 and  XDB0640200, National Key R\&D Program of China 2023YFA1009401. TS is partially supported by NSF grants DMS-1800492 and DMS-2201045. 
The AI tools Claude Fable 5.1 and ChatGPT Astra 6 were used to proofread this document, but were not used to conduct the research. 
WL especially thanks James Freitag, L\'{e}o Jimenez, 
Will Johnson, Omar Le\'{o}n S\'{a}nchez, and Ronnie Nagloo
for helpful discussions that deepened her understanding of model-theoretic
notions. We are particularly grateful to
James Freitag for suggesting that we consider the
$\aleph_0$-categorical case, which led to
Theorem~\ref{thm:omega-cat-dichotomy}.

\section{Background on difference-differential algebra and the model theoretic formalism}
\label{sec:background}
In this section we recall some basic  
differential and difference-differential algebra and 
the structure of definable sets, especially 
minimal sets.   The reader will find a 
detailed introduction to the model theory of 
ordinary differential
fields in~\cite{Marker-dcf} and 
to the model theory of partial differential fields in~\cite{Suer2007}.  
Marker's 
slides~\cite{Marker-sm-slides} give a good overview
of the theory of strongly minimal sets in differentially 
closed fields.

\begin{conv}
\label{conv:char-zero}
For the rest of this 
paper, even if it is not explicitly noted, by ``field''
we will mean ``field of characteristic zero''.
\end{conv}

\begin{definition}
\label{def:delta-and-delta-sigma-rings}  
Let $m \in \mathbb{N}_{\geq 1}$ be a natural number.  We write $\Delta = \{ \delta_1, \ldots, \delta_m\}$
for a set of $m$ symbols for derivations.  By a $\Delta$-ring we mean a commutative ring $R$ 
equipped with commuting derivations $\delta_i^R:R \to R$ for $1 \leq i \leq m$.   From now on we 
will simply write $\delta_i:R \to R$ for the derivation on $R$.  A \(\sigma\)-\(\Delta\)-ring 
is a $\Delta$-ring $(R, \delta_1, \ldots, \delta_m)$ given together with a distinguished 
ring endomorphism $\sigma:R \to R$ which commutes with all of the distinguished derivations 
$\delta_i:R \to R$.  A \(\sigma\)-\(\Delta\)-ring $R$ is
\emph{inversive} if $\sigma$ is an automorphism.  A $\Delta$-field, respectively,  (inversive) 
\(\sigma\)-\(\Delta\)-field, is a 
$\Delta$-ring, respectively, 
(inversive)  \(\sigma\)-\(\Delta\)-ring, whose underlying ring is a field.
\end{definition}

\begin{conv}
\label{conv:inversive-difference}
Henceforth, when we say ``\(\sigma\)-ring''
or ``\(\sigma\)-\(\Delta\)-ring''
or ``\(\sigma\)-whatever'', 
we mean \emph{inversive} 
\(\sigma\)-ring, \emph{inversive}
\(\sigma\)-\(\Delta\)-ring, or 
\emph{inversive} \(\sigma\)-whatever.
That is, we require \(\sigma\) to 
be an \emph{automorphism}.
\end{conv}

\begin{remark}
In the language of our 
introduction, we would refer to a $\Delta$-field 
as a differential field and a \(\sigma\)-\(\Delta\)-field
as a difference-differential field. 
Maintaining ``$\Delta$'' 
as part of the notation recalls that we have 
$m$ distinguished derivations. 
\end{remark}

With the following definition we recall the 
constructions of the rings of differential and 
difference-differential polynomials.

\begin{definition}
\label{def:delta-poly}  
Given a $\Delta$-ring \(R\) and a 
tuple $x = (x_1, \ldots, x_n)$ of variables, the 
$\Delta$-ring
of $\Delta$-polynomials over $R$ is the free $\Delta$-$R$-algebra generated by $x$.  We 
denote this algebra  by $R \{ x\}_\Delta$.  As an 
$R$-algebra, $R \{ x \}_{\Delta}$ is the polynomial ring
in the formal variables $\delta_1^{\ell_1} \cdots \delta_m^{\ell_m} x_i$ for $(\ell_1, \ldots, \ell_m) \in 
\mathbb{N}^m$ and $1 \leq i \leq n$.  
From the universal mapping property of a free algebra, 
whenever $R$ is a $\Delta$-ring, $f:R \to S$ is a homomorphism of
$\Delta$-rings, and $a \in S^n$ is an $n$-tuple
from $S$, then there is a unique \(\Delta\)-ring homomorphism
$\overline{f}:R\{x\}_\Delta \to S$
extending $f$ and having $\overline{f}(x_i) = a_i$ for 
$1 \leq i \leq n$.   For $P \in R\{x\}_{\Delta}$,  
we write $P(a) := \overline{f}(P)$.  

When \(R\) is a 
\(\sigma\)-\(\Delta\)-ring, we have a similar construction of the 
ring of \(\sigma\)-\(\Delta\)-polynomials
in the variables \(x\), \(R \{x\}_{\sigma,\Delta}\), as the free 
inversive \(\sigma\)-\(\Delta\)-\(R\)-algebra generated by \(x\).  As an 
\(R\)-algebra, \(R \{x\}_{\sigma,\Delta}\) is the 
polynomial ring over \(R\) in the formal variables $\sigma^{\ell_0} \delta_1^{\ell_1} \cdots \delta_m^{\ell_m} x_i$ for $(\ell_0,\ell_1, \ldots, \ell_m) \in 
\mathbb{Z} \times \mathbb{N}^{m}$ and $1 \leq i \leq n$. 
\end{definition}

The theory of $\Delta$-fields of characteristic zero, $\operatorname{DF}_{0,m}$, may be expressed naturally in the 
language of $\Delta$-rings, $\mathcal{L}(+,-,\cdot,0,1,\delta_1, \ldots, \delta_m)$, which is the 
expansion of the language of rings by 
unary function symbols for the derivations in $\Delta$.  Likewise, the theory of \(\sigma\)-\(\Delta\)-fields
of characteristic zero, $\operatorname{DFA}_{0,m}$, may be expressed in the language 
of \(\sigma\)-\(\Delta\)-rings, $\mathcal{L}(+,-,\cdot,0,1,\delta_1, \ldots, \delta_m,\sigma)$, which is the expansion of
the language of $\Delta$-rings by a function symbol for the distinguished 
automorphism $\sigma$.

The theory of $\Delta$-fields of characteristic zero admits a model companion, the theory $\DCF$
of differentially closed fields of characteristic zero with $m$ commuting derivations~\cite{Robinson-DCF, McGrail-JSL}.
In fact, $\DCF$ admits quantifier elimination as shown by Blum in the case 
of $m = 1$~\cite{Blum-thesis}, 
explicitly by B\"{o}ger and McGrail, 
separately, in general~\cite{Böger-diplom,McGrail-JSL} and Yaffe under the weaker hypothesis that the derivations 
in $\Delta$ generate a Lie algebra of dimension \(m = |\Delta|\)
rather than that they commute~\cite{Yaffe}.  The theory of 
differentially closed fields is characterized by the 
following Nullstellensatz:  a $\Delta$-field $K$ is
 differentially closed if and only if whenever 
$x = (x_1, \ldots, x_n)$ is a finite tuple of variables
and $P_1, \ldots, P_\ell \in K \{ x \}_{\Delta}$ and $Q \in K \{ x \}_\Delta$ are a finite list of $\Delta$-polynomials over 
$K$ for which in some $\Delta$-field extension 
$L$ of $K$, there is a solution $a \in L^n$ to the 
system of algebraic $\Delta$-equations and inequations
$P_1(a) = \cdots = P_\ell(a) = 0$ and $Q(a) \neq 0$,  
then there is such a solution already in $K$.

The theory of \(\sigma\)-\(\Delta\)-fields also has a 
model companion, $\DCFA$, of 
difference-differentially closed fields of 
characteristic zero.  It is reported 
in~\cite{Chatzidakis-generic} that the existence of 
$\operatorname{DCF}_{0,1}\operatorname{A}$ was first observed by 
Hrushovski.  This theory was worked out in 
detail by Bustamante~\cite{Bustamante-thesis}.
Guzy and Rivi\`{e}re obtained a characterization of existentially 
closed difference-differential fields with 
several commuting derivations in~\cite{GR-dcfam} though it is not 
clear on the face of it that their axiomatization may be expressed in 
first-order logic.   Le\'{o}n S\'{a}nchez showed that the 
model companion \(\DCFA\) exists in~\cite{LS-DCFAm}.
Difference-differentially closed fields are characterized
by the analogue of the Nullstellensatz 
describing differentially closed fields, but 
this time with \(\sigma\)-\(\Delta\)-polynomials
in place of mere $\Delta$-polynomials. 
Unlike the theory $\DCF$,
the theory $\DCFA$ does 
not enjoy quantifier elimination.  A weaker
quantifier simplification whereby every 
formula $\varphi$ is equivalent to one of the form $\exists y \psi(x,y)$,
where $\psi$ is quantifier-free and $\psi$ implies that each coordinate of 
$y$ is algebraic over
the difference-differential field generated by $x$ and the parameters,
holds \cite{Bustamante-thesis, LS-DCFAm}. 

While \(\DCF\) is
complete, to determine  completions of 
\(\DCFA\) it is necessary to 
specify the action of \(\sigma\) on the algebraic 
numbers,  up to conjugacy in
$\operatorname{Gal}(\mathbb Q^{\mathrm{alg}}/\mathbb Q)$;
see \cite[Proposition~3.1(ii)]{LS-DCFAm}.
Throughout, we fix one such completion.

\begin{conv}
For the sake 
of simplifying some of our statements and definitions, 
let us fix a universal domain \(\mathbb{U}\) for a chosen
completion of \(\DCFA\).  That is, 
\(\mathbb{U}\) is a model of \(\DCFA\)
and if \(K \subseteq \mathbb{U}\) is an algebraically closed 
``small'', say, \(|K| < |\mathbb{U}|\), \(\sigma\)-\(\Delta\)-subfield of 
\(\mathbb{U}\) and \(K \subseteq L\) is an extension to another
small \(\sigma\)-\(\Delta\)-field then there is an embedding of 
\(L\) into \(\mathbb{U}\) over \(K\).  Forgetting \(\sigma\), 
\(\mathbb{U}\) is also a universal domain for \(\DCF\).
\end{conv}

\begin{nota}
\label{nota:ast-for-delta}
We will have occasion to compare notions defined for \(\Delta\)-fields and
for \(\sigma\)-\(\Delta\)-fields.  Often, these concepts are 
defined in analogous ways.  We will write a simple asterisk, ``\(\ast\)'', 
for ``\(\Delta\)'' or ``\(\sigma\)-\(\Delta\)''.
Thus ``\(\ast\)-field'' is 
to be read as ``\(\Delta\)-field'' or ``inversive \(\sigma\)-\(\Delta\)-field'' as the 
context demands.
\end{nota}

Let us
introduce the \(\ast\)-closure and \(\ast\)-algebraic-closure.

\begin{definition}
    For \(A \subseteq \mathbb{U}\)
we define \(\langle A \rangle_\ast\) to be the \(\ast\)-field
generated by \(A\) in \(\mathbb{U}\).   We define \(\acl^\ast(A)\) to be the 
field-theoretic algebraic closure of \(\langle A \rangle_\ast\). 
For a tuple $a$ from $\mathbb{U}$ and a $\ast$-subfield $K\subseteq\mathbb{U}$ we write $K\langle a\rangle_\ast := \langle K\cup\{a\}\rangle_\ast$.
\end{definition} 

\begin{remark}
The set  \(\acl^\ast(A)\) is the model-theoretic algebraic closure of \(A\) relative to
\(\DCF\) or \(\DCFA\), respectively.  That is, 
\(\acl^\ast(A)\) is the set of elements of \(\mathbb{U}\) which 
belong to some \emph{finite} definable set, defined using parameters
from \(A\), again relative to \(\DCF\) or \(\DCFA\), respectively.
Equivalently, \(\acl^\ast(A)\) is the set of elements of \(\mathbb{U}\)
whose orbit under \(\operatorname{Aut}^\ast(\mathbb{U}/A)\), the
group of automorphisms of \(\mathbb{U}\) as a \(\ast\)-field
fixing \(A\) pointwise, is finite.
\end{remark}

\begin{remark}
Let us note that $\langle A \rangle_\Delta=\mathbb{Q}\left(\delta_1^{\ell_1} \cdots \delta_m^{\ell_m}(a): a\in A, \ell_i\in\mathbb N\right)$ and  \(\langle A \rangle_{\sigma\text{-}\Delta}\) \(= \mathbb{Q}\left(\bigcup_{n \in \mathbb{Z}} \sigma^n(\langle A \rangle_\Delta)\right)\).   
\end{remark}

The theories \(\DCF\) and \(\DCFA\) inherit a good theory of independence from 
the more classical algebraic independence.

\begin{definition}
\label{def:ast-ind}    
For \(A\), \(B\), and \(C\) small subsets of $\mathbb{U}$,
we say that \(A\) is \(\ast\)-free from \(B\) over \(C\), written \(A \ind^\ast_C B\), 
if the field \(\langle A \cup C\rangle_\ast\) is algebraically independent from 
\(\langle B \cup C \rangle_\ast\) over \(\langle C \rangle_\ast\).
\end{definition}

\begin{remark}\label{rk:properties-of-independence}
The independence relations \(\ind^\ast\) for \(\ast = \Delta\) or 
\(\ast=\sigma\text{-}\Delta\) agree with forking independence as 
defined in general simple theories.  See~\cite[Proposition 3.1(v)]{LS-DCFAm}.
Since \(\ind^\ast\) agrees with forking independence in a simple theory, it satisfies some useful properties.
For example, it is symmetric in the sense that for any sets \(A\), \(B\), and \(C \subseteq \mathbb{U}\)
we have \[A {\ind_B}^\ast C \Longleftrightarrow C {\ind_B}^\ast A .\]  In 
our concrete case, it is easy to deduce this symmetry from the evident symmetry of algebraic independence.  
Another useful property is transitivity of independence:  if \(B \subseteq C \subseteq D \subseteq \mathbb{U}\) and 
\(A \subseteq \mathbb{U}\) then we have \[A {\ind_B}^\ast D \Longleftrightarrow (A {\ind_B}^\ast C \text{ and } A {\ind_C}^\ast D) .\]
As with symmetry, transitivity for \(\ind^\ast\) is inherited from transitivity for algebraic 
independence.
\end{remark}

\begin{remark}
\label{rk:general-stable}  
With the exception of the analysis of specific differential equations carried out in 
Section~\ref{sec:examples-of-differential-equations}, almost everything
we do makes sense for expansions of models of general stable theories by automorphisms.   That is, 
if we allow \(\mathbb{U}\) to be a large saturated model with a stable theory and \(\sigma:\mathbb{U} \to 
\mathbb{U}\) to be a generic automorphism of \(\mathbb{U}\), then we may define an independence
relation \(A \ind_B^\sigma C\) on triples of small subsets of \(\mathbb{U}\) by the rule that 
\[A {\ind_B}^\sigma C \Longleftrightarrow \operatorname{acl}(\bigcup_{n \in \mathbb{Z}} \sigma^n(A \cup B)) \ind_{\operatorname{acl}(\bigcup_{n \in \mathbb{Z}} \sigma^n(B))}  \operatorname{acl}(\bigcup_{n \in \mathbb{Z}} \sigma^n(B \cup C)) \]
where here \(\operatorname{acl}\) is the model theoretic algebraic closure in \(\mathbb{U}\) and 
\(\ind\) is forking independence relative to the theory of \(\mathbb{U}\).  As in Remark~\ref{rk:properties-of-independence}
it is easy to check that \(\ind^\sigma\) satisfies symmetry and transitivity.  Since our principal motivation is to understand 
differential equations, our results are written for \(\Delta\)-fields and \(\sigma\)-\(\Delta\)-fields, but the interested reader 
can easily generalize to arbitrary expansions of stable structures by automorphisms by replacing 
\(\ind^\Delta\) with \(\ind\) and \(\ind^{\sigma\text{-}\Delta}\) by \(\ind^\sigma\).
\end{remark}

With the next definition we encode the equations 
satisfied by some tuple with ideals in the 
rings of  
\(\ast\)-polynomials. 

\begin{definition}
\label{def:ideal}
Let $K$ be a $\ast$-field and let $x = (x_1, \ldots, x_n)$
be a tuple of variables.
For a point $a \in \mathbb{U}^n$, we 
let $I_\ast(a/K)$ be the \(\ast\)-ideal in 
\(\mathbb{U} \{ x \}_\ast\) generated by 
\(\{ P \in K \{ x \}_\ast : P(a) = 0 \}\).
For  $S \subseteq \mathbb{U} \{ x \}_\ast$ a set of
$\ast$-polynomials we let  
$\mathbb V(S)(\mathbb{U}) := \{ b \in \mathbb{U}^n : P(b) = 0 
\text{ for all } P \in S \}$.  Usually, we will write 
$X =\mathbb V (S)$ and then $X(K)$ for $\mathbb V(S)(K) := \mathbb V(S)(\mathbb{U}) \cap K^n$.  
We will say 
that $X$ is defined by $S$ and that \(X\) is defined over \(K\) if
\(S \subseteq K \{ x \}_\ast\).
\end{definition}

From the point of view of the first-order languages of 
\(\ast\)-rings,  the 
quantifier-free definable sets in 
\(\ast\)-fields are given by finite 
Boolean combinations of 
\(\ast\)-polynomial equations.   As noted above, 
because \(\DCF\) has quantifier elimination, 
relative to \(\DCF\) these comprise the entire
class of definable sets.   

\begin{conv}
From now on, we use the term ``definable set'' to mean 
``quantifier-free definable set relative to \(\DCF\) or \(\DCFA\)''
as the context demands.
\end{conv}

Let us recall the notion of a type, specialized to the 
theories \(\DCF\) and \(\DCFA\).

\begin{definition}
A \emph{(quantifier-free) partial $\ast$-type} in the
variables $x=(x_1,\ldots,x_n)$ is a small,  
consistent set $\Sigma$ of equations
$\ulcorner P(x)=0\urcorner$ and inequations
$\ulcorner P(x)\neq 0\urcorner$, where
$P\in\mathbb U\{x\}_\ast$ is a $\ast$-polynomial.
Here, consistency means that for any finite subset \(\Sigma_0 \subseteq \Sigma\) 
there is a solution \(a \in \mathbb{U}^n\)
to all the equations \(P(a) = 0\) and inequations \(Q(a) \neq 0\) with 
\(\ulcorner P(x) = 0 \urcorner \in \Sigma_0\) and 
\(\ulcorner Q(x) \neq 0 \urcorner \in \Sigma_0\). 
\end{definition}

With the next definition we see how to relate types to their 
solutions in \(\mathbb{U}\).

\begin{definition}
If \(\Sigma\) is a partial type, then 
there is a solution to all of \(\Sigma\) in 
\(\mathbb{U}\) by our universality hypothesis on 
\(\mathbb{U}\).  We refer to such a 
solution as a \emph{realization} of \(\Sigma\).  We 
write \(\Sigma(\mathbb{U})\) for the set of 
realizations of \(\Sigma\) in \(\mathbb{U}\).  
\end{definition}

\begin{definition}
Let $K\subseteq\mathbb U$ be a small $\ast$-subfield.
A partial $\ast$-type $\Sigma$ is \emph{over $K$} if 
\(P \in K \{x\}_\ast\) for every 
equation \(\ulcorner P(x) = 0 \urcorner \)
and inequation \(\ulcorner P(x) \neq 0 \urcorner \)
in \(\Sigma\).
We say that $\Sigma$ is \emph{complete} if, for every
$P\in K\{x\}_\ast$, exactly one of
$\ulcorner P(x)=0\urcorner$ and
$\ulcorner P(x)\neq 0\urcorner$ belongs to $\Sigma$.
We write $S_{x,\ast}(K)$ for the space of complete quantifier-free
$\ast$-types over $K$ in the variables $x$, or simply $S_{\ast}(K)$ when the tuple $x$ is clear.

For $a\in\mathbb U^n$, we define its $\ast$-type over $K$ by
\[
\begin{aligned}
\operatorname{tp}_\ast(a/K)
&:= \bigl\{
  \ulcorner P(x)=0\urcorner :
  P\in K\{x\}_\ast,\ P(a)=0
  \bigr\}\\
&\quad\cup \bigl\{
  \ulcorner P(x)\neq 0\urcorner :
  P\in K\{x\}_\ast,\ P(a)\neq 0
  \bigr\}.
\end{aligned}
\]
\end{definition}

\begin{remark}
If the partial type \(\Sigma\) is finite, then 
the set of realizations \(\Sigma(\mathbb{U})\) is 
a definable set.  In general, for any partial type
\(\Sigma\), we have that \(\Sigma(\mathbb{U}) = \bigcap_{\Sigma_0 \subseteq \Sigma, \text{ finite }} \Sigma_0(\mathbb{U})\)
expresses the set of realizations of \(\Sigma\) as an intersection of definable sets.  
\end{remark}

\begin{remark}
It follows from the saturation hypothesis on 
\(\mathbb{U}\) that if \(K \subseteq \mathbb{U}\)
is a small \(\ast\)-subfield then every type in 
\(S_{x,\ast}(K)\) is of the form \(\operatorname{tp}_\ast(a/K)\)
for some \(a \in \mathbb{U}^n\).
\end{remark}

\begin{remark}
If $K$ is a small $\ast$-field, there is a natural map
from $S_{x,\ast}(K)$ to the set
$\operatorname{Spec}^{\ast}(K\{x\}_{\ast})$
of prime $\ast$-ideals, given by
\[
\begin{aligned}
\mathfrak{p}\colon S_{x,\ast}(K)
&\longrightarrow
\operatorname{Spec}^{\ast}(K\{x\}_{\ast}),\\
p
&\longmapsto
\bigl\{
P\in K\{x\}_{\ast}
:
\ulcorner P(x)=0\urcorner\in p
\bigr\}.
\end{aligned}
\]
If \(\ast = \Delta\), then \(\mathfrak{p}\) is 
a bijection.  In general, if \(K\) is algebraically
closed, \(\mathfrak{p}\) is still a bijection,
though when \(\ast = \sigma\text{-}\Delta\) and 
\(K\) is not algebraically closed, then because there 
may be extensions of \(\sigma\) to the 
algebraic closure of \(K\) which are not conjugate 
to the specified extension computed from the 
algebraic closure of \(K\) inside of \(\mathbb{U}\), 
some prime \(\sigma\)-\(\Delta\)-ideals
in \(K\{x\}_{\sigma\text{-}\Delta}\) might 
not be in the image of \(\mathfrak{p}\).

If the type \(p \in S_{x,\ast}(K)\) is realized by 
\(a \in \mathbb{U}^n\), so that \(p = \operatorname{tp}_\ast(a/K)\), 
then \(I_\ast(a/K)\) is the ideal generated by \(\mathfrak{p}(p)\) in 
\(\mathbb{U}\{x\}_\ast\).
\end{remark}

We may understand independence in terms of 
the \(\ast\)-polynomial rings.

\begin{prop} \label{prop-nonforking-ideal}
Let \(K \subseteq \mathbb{U}\) 
be an algebraically closed 
\(\ast\)-field, \(n \in \mathbb{N}\), 
\(a \in \mathbb{U}^n\), and 
\(K \subseteq L \subseteq \mathbb{U}\) an 
extension of \(\ast\)-fields.  Then 
\(a \Ind^\ast_K L\) if and only if 
\(I_\ast(a/K) = I_\ast(a/L)\).
\end{prop}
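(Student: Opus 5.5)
The plan is to translate both sides into statements about the fields $K\langle a\rangle_\ast$ and $L$, using only linear disjointness facts for algebraically closed base fields. By definition, $a \ind^\ast_K L$ means that $K\langle a\rangle_\ast$ is algebraically independent from $L$ over $K$; here $\langle K\rangle_\ast = K$ and $\langle L\rangle_\ast = L$. Since $K$ is algebraically closed, $K\langle a\rangle_\ast$ is a regular extension of $K$. So algebraic independence is equivalent to linear disjointness of $K\langle a\rangle_\ast$ and $L$ over $K$, and it suffices to show that linear disjointness is equivalent to $I_\ast(a/K)=I_\ast(a/L)$.

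The inclusion $I_\ast(a/K)\subseteq I_\ast(a/L)$ is automatic, because $K\{x\}_\ast\subseteq L\{x\}_\ast$. The real content is the comparison of the ideals $\mathfrak p_K=\{P\in K\{x\}_\ast : P(a)=0\}$ and $\mathfrak p_L=\{P\in L\{x\}_\ast : P(a)=0\}$. I will first show that linear disjointness is equivalent to $\mathfrak p_L = L\cdot\mathfrak p_K$. The evaluation map $K\{x\}_\ast\to K[\ast\text{-derivatives of }a]=:R_K$ has kernel $\mathfrak p_K$. Tensoring with $L$ gives a surjection
\[
L\otimes_K K\{x\}_\ast = L\{x\}_\ast \longrightarrow L\otimes_K R_K \longrightarrow L[R_K]\subseteq \mathbb U,
\]
whose first kernel is $L\otimes_K\mathfrak p_K=L\mathfrak p_K$, by flatness of $L$ over $K$. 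Hence $\mathfrak p_L = L\mathfrak p_K$ if and only if $L\otimes_K R_K\to L[R_K]$ is injective. That is exactly linear disjointness of $R_K$ and $L$ over $K$, which is equivalent to linear disjointness of $\operatorname{Frac}(R_K)=K\langle a\rangle_\ast$ and $L$.

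It remains to pass from $L$ to $\mathbb U$. I claim that $\mathfrak p_L = L\mathfrak p_K$ if and only if $\mathbb U\mathfrak p_L=\mathbb U\mathfrak p_K$. Note that $\mathbb U\mathfrak p$ is already a $\ast$-ideal, since $\mathfrak p$ is closed under the operators and the coefficients lie in a $\ast$-field, so $I_\ast(a/\cdot)=\mathbb U\mathfrak p$. The forward direction is immediate. For the converse, suppose $P\in\mathfrak p_L$ lies in $\mathbb U\mathfrak p_K=\mathbb U\otimes_K\mathfrak p_K$. By faithful flatness of $\mathbb U$ over $L$ we have $\mathbb U\otimes_L(L\mathfrak p_K)\cap L\{x\}_\ast = L\mathfrak p_K$, that is, $\mathbb U I\cap L\{x\}_\ast = I$ for any ideal $I$ of $L\{x\}_\ast$. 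This can be checked concretely by choosing an $L$-basis of $\mathbb U$ containing $1$ and comparing coefficients. So $P\in L\mathfrak p_K$.

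The main obstacle is making sure the regularity step is justified. Algebraic independence implies linear disjointness only because $K$ is algebraically closed, so $K\langle a\rangle_\ast$ is regular over $K$. I will quote this standard fact from field theory (Lang, or Weil's \emph{Foundations}) rather than reprove it. The rest is routine tensor-product bookkeeping, in which each $\ast$-polynomial ring is viewed simply as a polynomial ring in infinitely many variables.
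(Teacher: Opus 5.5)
Your proposal is correct and follows essentially the same route as the paper. You use regularity over the algebraically closed $K$ to reduce independence to linear disjointness of $K\langle a\rangle_\ast$ and $L$, and then identify linear disjointness with $\mathfrak p_L = L\,\mathfrak p_K$; your tensor-product/flatness formulation is the paper's coefficient-basis argument in different language. Your explicit check that $\mathbb U I\cap L\{x\}_\ast = I$ usefully fills in the passage from $L\{x\}_\ast$ to $\mathbb U\{x\}_\ast$, which the paper only asserts ("after extending to $\mathbb U\{x\}_\ast$").
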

\begin{proof}
Set $F:=K\langle a\rangle_\ast$, $J_K:=\{P\in K\{x\}_\ast:P(a)=0\}$ and
$J_L:=\{P\in L\{x\}_\ast:P(a)=0\}.$ 
Since $K$ is algebraically closed, $a\Ind^\ast_K L$
is equivalent to linear disjointness of
$F$ and $L$ over $K$.   We show that this is equivalent to 
\(J_L=J_K\cdot L\{x\}_\ast\), i.e., \(I_\ast(a/K) = I_\ast(a/L)\) after extending to $\mathbb U\{x\}_\ast$.  Suppose first that $F$ and $L$ are linearly disjoint
over $K$. For $P\in J_L$, choose a $K$-basis
$\lambda_1,\ldots,\lambda_r$ of the span of its
coefficients and write
\(
P=\sum_{i=1}^r\lambda_iP_i, P_i\in K\{x\}_\ast.
\)
By linear disjointness, the $\lambda_i$ remain
linearly independent over $F$.
Thus $P_i(a)=0$ for every $i$, which proves \(J_L=J_K\cdot L\{x\}_\ast\). Conversely, $J_L=J_KL\{x\}_\ast$ implies that a
$K$-basis of $K\{a\}_\ast$ remains linearly independent
over $L$, so, by clearing denominators, $F$ and $L$
are linearly disjoint over $K$.
\end{proof}

Let us recall the definitions of algebraic and 
minimal types.

\begin{definition}
Let \(K\) be a small \(\ast\)-field and \(p \in S_{x,\ast}(K)\) a
complete \(\ast\)-type over \(K\).  We say that \(p\) is
\emph{algebraic} if every component of a realization of \(p\) 
belongs to \(K^{\operatorname{alg}}\).  We say that \(p\) is 
\emph{minimal} if for every \(\ast\)-field \(L\) with \(K \subseteq L \subseteq \mathbb{U}\)
there is a unique non-algebraic type \(q \in S_{x,\ast}(L)\) with \(p \subseteq q\).
\end{definition}

\begin{remark}
\label{remark:alg-minimality}
For \(K\) an algebraically closed 
\(\ast\)-subfield of \(\mathbb{U}\), the type \(p \in S_{x,\ast}(K)\) is minimal if and only if it has infinitely many realizations in 
\(\mathbb{U}\) but for any \(P \in \mathbb{U} \{x\}_\ast \backslash (\mathfrak{p}(p)\cdot \mathbb{U} \{x\}_\ast)\), \(P(x) = 0\) has at most finitely many solutions 
in \(p(\mathbb{U})\).  In particular, if the transcendence degree of \(\operatorname{Frac}(K \{x\}_\Delta/\mathfrak{p}(p))\) 
 over \(K\) is one, then \(p\) is a minimal type. The converse fails as, for instance, the generic type of the third order algebraic differential equation satisfied by the $j$-function is minimal~\cite{FS2018}, although \(
 \operatorname{tr.deg}_K
\operatorname{Frac}\bigl(K\{x\}_\Delta/\mathfrak{p}(p)\bigr)=3.
\)
\end{remark}

With the next definitions we specify what it means for minimal types to be 
orthogonal to each other.

\begin{definition}
Let \(K\) be a small \(\sigma\)-\(\Delta\)-subfield of \(\mathbb{U}\) and \(p \in S_{x,\Delta}(K)\) and \(q \in S_{y,\Delta}(K)\) two
minimal types over \(K\) in the variables \(x = (x_1, \ldots, x_n)\) and \(y = (y_1, \ldots, y_\ell)\), respectively.
We say that \(p\) and \(q\) are \emph{weakly orthogonal}, written \(p \perp^{\operatorname{w}} q\), if whenever \(a \in \mathbb{U}^n\) and \(b \in \mathbb{U}^\ell\) realize \(p\) and \(q\), respectively, then \(a \Ind^\Delta_K b\).    We say that \(p\) and \(q\) are 
\emph{orthogonal}, written \(p \perp q\), if for any small \(\sigma\)-\(\Delta\)-field  extension \(L\) of $K$ and any non-forking extensions
\(\widetilde{p} \in S_{x,\Delta}(L)\) and \(\widetilde{q} \in S_{y,\Delta}(L)\) of \(p\) and \(q\) we have 
\(\widetilde{p} \perp^{\operatorname{w}} \widetilde{q}\).  
\end{definition}

\begin{remark}
\label{rk:alg-non-orthogonal}
We may understand (non)-orthogonality algebraically.  If \(K\) is an algebraically closed 
\(\sigma\)-\(\Delta\)-field and \(p \in S_{x,\Delta}(K)\) and \(q \in S_{y,\Delta}(K)\) 
are two minimal types, then \(p \not \perp^{\operatorname{w}} q\) if and only if there 
are realizations \(a\) and \(b\) of \(p\) and \(q\) so that \(\operatorname{acl}^\Delta(K \cup \{a\}) = \operatorname{acl}^\Delta(K \cup \{b\})\).
\end{remark}

\begin{example}
\label{example:weakly-orthogonal}
Let \(K := \mathbb{Q}^{\operatorname{alg}}\) be the algebraic closure of the rational numbers regarded as a 
\(\sigma\)-\(\Delta\)-subfield of \(\mathbb{U}\).  Let \(m = n = \ell = 1\).  Take \(p := p_{\mathsf{C},K}\) to be  the generic type of the constants.  That is, 
\(p\) is generated by the equation \(\delta_1 x_1 = 0\) and the inequations \(f(x_1) \neq 0\) for every 
nonzero polynomial \(f \in \mathbb{Q}^{\operatorname{alg}}[x_1]\).    Let \(q\) be the type isolated by 
the equation \(\delta_1(y_1) = y_1\) and the inequation \(y_1 \neq 0\).  We see that \(p\) and \(q\) are minimal types from the 
transcendence degree criterion of Remark~\ref{remark:alg-minimality}.

Then \(p \perp^{\operatorname{w}} q\) since if \(a\) is a 
realization of \(p\), then the derivation on the 
field \(\operatorname{acl}^\Delta(\mathbb{Q}^{\operatorname{alg}} \cup \{a \}) = \mathbb{Q}(a)^{\operatorname{alg}}\)
is trivial as \(a\) is a constant, but the realizations of \(q\) are nonconstant.  On the other hand, 
\(p \not \perp q\) for if we take \(L\) to be generated over \(K\) by one solution \(d\) to \(q\), let \(a\) be a
constant which is not algebraic over \(L\), and set \(b := ad\), then \(\operatorname{tp}(b/L)\) is a minimal type extending
\(q\) though visibly \(a \nind^\Delta_L b\).
\end{example}

The main problem we address in this note is: suppose that \(K \subseteq \mathbb{U}\) is a 
small, algebraically closed inversive \(\sigma\)-\(\Delta\)-subfield of \(\mathbb{U}\) and
\(p \in S_{x,\Delta}(K)\) is a minimal type over \(K\) in the sense of \(\DCF\).  What kind of set 
does \(p\) define when regarded as a \emph{partial} type relative to \(\DCFA\)?   We answer the question in detail when \(p\) is 
\emph{disintegrated}.

\begin{definition}
\label{def:trivial} 
Let \(K\) be a small \(\sigma\)-\(\Delta\)-subfield of \(\mathbb{U}\) and \(p \in S_{x,\Delta}(K)\) a minimal 
type over \(K\) relative to \(\DCF\).  We say that \(p\) is \emph{geometrically trivial}, or
\emph{disintegrated}, if every pairwise independent set of
realizations of \(p\) is independent over \(K\).
Explicitly, for any set \(S\) of realizations of \(p\), if
\(a \ind^\Delta_K b\) for all distinct \(a,b \in S\), then
\(
a \ind^\Delta_K \bigl(S \smallsetminus \{a\}\bigr)\)
 for every   \(a \in S.\)
\end{definition}

\begin{remark}
If \(K\) is any small \(\Delta\)-field and \(q \in S_{x,\Delta}(K)\) 
is a geometrically trivial minimal type, then 
\(p_{\mathsf{C},K} \perp q\).  This follows from the facts that geometric triviality is preserved under nonorthogonality of minimal types and  that \(p_{\mathsf C,K}\) is not geometrically trivial.
\end{remark}

A stronger condition on a type which implies geometric triviality is that of being \emph{totally disintegrated}.

\begin{definition}
\label{def:totally-disintegrated}
Let \(K\) be a small \(\sigma\)-\(\Delta\)-subfield of \(\mathbb{U}\) and \(p \in S_{x,\Delta}(K)\) a minimal 
type over \(K\) relative to \(\DCF\).  We say that \(p\) is \emph{totally disintegrated} if
every set of realizations of \(p\) is independent.
\end{definition}

\begin{remark}
In Definition~\ref{def:totally-disintegrated} we could have required merely that the type \(p\) is non-algebraic:  minimality follows
from the requirement that every set of distinct
realizations of $p$ is independent over $K$ relative
to $\DCF$.
\end{remark}

Early in the study of the theory \(\DCF\) from the standpoint of stability theory, totally disintegrated types 
were constructed independently by Kolchin~\cite{Kolchin-constrained},
Rosenlicht~\cite{Rosenlicht}, and Shelah~\cite{Sh-dcf}.  Later in this note, we will
make use of results of Hrushovski and Itai~\cite{HrIt} giving criteria from the 
geometry of algebraic curves to recognize totally disintegrated types.

Let us recall a fundamental theorem
of Baldwin and Harrington 
proven in the 
context of stable theories.

 \begin{lemma}\cite[Lemma 1.4]{Baldwin-Harrington} \label{lemma-trivialregular}
 Let $p\in S(A)$ be trivial and regular.
 If $a$ realizes $p$ and for some $B_0, B_1$ with $B_0\ind_A B_1$, $a\nind_A B_0\cup B_1$, then $a \nind_A B_0$ or $a \nind_A B_1$.
 \end{lemma}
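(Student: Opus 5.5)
We prove the contrapositive. Assume \(a \ind_A B_0\) and \(a \ind_A B_1\); the goal is \(a \ind_A B_0 \cup B_1\). The first move is to reduce to finite data. By finite character of forking, it suffices to treat finite tuples \(b_0 \subseteq B_0\) and \(b_1 \subseteq B_1\). Monotonicity then preserves the hypotheses \(b_0 \ind_A b_1\), \(a \ind_A b_0\) and \(a \ind_A b_1\).

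Regularity lets us work with a \(p\)-weight decomposition. Since \(p\) is regular and \(a \nind_A b_0 b_1\), I would choose a Morley sequence in \(p\) that is maximal independent inside realizations of \(p\) dominated by \(b_0b_1\) over \(A\). Concretely, I would take realizations \(c_1,\ldots,c_k\) of \(p\), independent over \(A\), with \(a \nind_A c_1\cdots c_k\). By triviality of \(p\), non-independence of \(a\) from an \(A\)-independent set of realizations of \(p\) forces \(a \nind_A c_i\) for a single \(i\). This is where triviality enters: any pairwise independent set of realizations is independent.

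The heart of the argument is to locate \(c_i\) relative to \(b_0\) and \(b_1\). Since \(b_0 \ind_A b_1\), the type of \(b_0b_1\) splits, and the \(p\)-weight of \(b_0b_1\) over \(A\) should be additive: \(w_p(b_0b_1)=w_p(b_0)+w_p(b_1)\). Given this additivity, one can choose the \(c_j\) so that each lies in the \(p\)-closure of \(A b_0\) or of \(A b_1\). This is done by taking maximal independent sets of realizations of \(p\) forking with \(b_0\), and separately with \(b_1\), and using regularity to see that their union witnesses all \(p\)-weight of \(b_0b_1\). Then \(a \nind_A c_i\) with \(c_i\) dominated by \(b_0\) (say) gives \(a \nind_A b_0\), contradicting the assumption.

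I expect the main obstacle to be this domination and transfer step. Regularity gives that nonforking with \(b_0\) is preserved along domination, but making this precise may need passage to \(\acl^{eq}\) and a choice of nonforking extensions. If the weight bookkeeping gets messy, I would fall back on a more direct argument. Assume \(a \ind_A b_0\) and \(a \ind_A b_1\), and take \(a'\) realizing \(\operatorname{tp}(a/Ab_0)\) with \(a' \ind_{Ab_0} b_1\). Then compare \(a\) with \(a'\) using triviality applied to the set \(\{a,a'\}\) together with conjugates of \(a\) over \(Ab_1\). This route derives the forking of a pair among realizations of \(p\) that regularity forbids.
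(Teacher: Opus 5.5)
Your proposal has a genuine gap at the step you call the heart of the argument. That step is producing an $A$-independent family $c_1,\dots,c_k$ of realizations of $p$, each forking over $A$ with $b_0$ or with $b_1$, such that $a\nind_A c_1\cdots c_k$. You justify it by additivity of $p$-weight ``using regularity''. But your set-up counts weight by realizations of $p$ over $A$ itself, and there regularity gives no such additivity.

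Take the situation of Example~\ref{example:weakly-orthogonal}. Let $p$ be the generic type of the constants over $K$, and let $d,d'$ be $K$-independent nonzero solutions of $\delta_1 y=y$. No realization of $p$ forks over $K$ with $d$, nor with $d'$, because the constants of $K(d)^{\operatorname{alg}}$ lie in $K$. Yet $d'/d\models p$ forks with $dd'$. Here $p$ is not trivial, but this shows your step cannot follow from regularity plus bookkeeping over $A$.

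For trivial $p$ the step is circular. Write $\operatorname{cl}_p(X)$ for the set of realizations of $p$ forking with $X$ over $A$. The claim that $\operatorname{cl}_p(b_0b_1)$ is generated by bases of $\operatorname{cl}_p(b_0)$ and $\operatorname{cl}_p(b_1)$ is, given regularity and triviality, equivalent to the lemma itself. Your fallback paragraph is not an argument: it never says which pair of realizations forks, or why regularity forbids it.

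To make the weight strategy work you must change the base and supply several ingredients you only allude to:
\begin{enumerate}
\item Pass to a sufficiently saturated model $M\supseteq A$ with $M\ind_A ab_0b_1$, over which the genuine $p$-weights of $b_0$, $b_1$ and $b_0b_1$ are witnessed by realizations of $p|M$. In the example above, $d/e$ and $d'/e$ serve, where $e\in M$ is a nonzero solution of $\delta_1 y=y$.
\item Choose bases $C_0,C_1$ of the $p|M$-closures of $b_0,b_1$ with $C_0\ind_M C_1$. This needs care, since realizations forking with $b_0$ need not lie in $\acl(Mb_0)$.
\item Use additivity of genuine $p$-weight together with its finiteness. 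Then $\operatorname{cl}_p(C_0C_1)\subseteq\operatorname{cl}_p(b_0b_1)$ (now computed over $M$) with equal finite dimension forces equality. This uses superstability, which holds for $\DCF$ but not in an arbitrary stable theory.
\item Use triviality of the nonforking extension $p|M$. This is part of Baldwin--Harrington's definition but not of Definition~\ref{def:trivial}.
\item Descend from $a\nind_M b_0$ to $a\nind_A b_0$ using $M\ind_A ab_0$.
\end{enumerate}

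Your last step is fine and needs no domination. If $c\models p$, $c\nind_A b_0$ and $a\ind_A b_0$, then regularity gives $a\ind_{Ab_0}c$, hence $a\ind_A c$.
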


In \cite{Baldwin-Harrington} a stationary type is called \emph{trivial} if any three pairwise independent realizations of any of its nonforking extensions are independent; for minimal types over an algebraically closed $\ast$-field this agrees with Definition~\ref{def:trivial} above (see Proposition~\ref{prop:indep-trivial}).

A minimal type is a particular kind of 
regular type.  For the sake of 
completeness, let us recall the 
definition of \emph{regular type} and 
explain why minimality implies regularity.

\begin{definition}
\label{def:regular}
A type \(p \in S_x(A)\) is 
\emph{regular} if for any parameter 
set \(B \supseteq A\) extending 
\(A\) and pair of realizations \(a\)
and \(b\) of \(p\), if 
\(a \ind_A B\) and \(b \nind_A B\),
then \(a \ind_B b\).  
\end{definition}

In Definition~\ref{def:regular} we 
are taking \(A\) and \(B\) to be 
parameter sets in some model of a
stable theory, the type space 
\(S_x(A)\) would be defined using
all formulas (not just quantifier-free
formulas) and \(\ind\) means 
forking independence.  As we have 
already noted, by quantifier elimination
for \(\DCF\), relative 
to \(\DCF\) our notion of \emph{type}
agrees with the general one and 
\(\ind^\Delta\) agrees with 
forking independence.   Usually, in
the definition of \emph{regular type}
one requires that \(p\) be non-algebraic,
which means for general theories that
it has infinitely many realizations in 
some model.

\begin{prop}
\label{prop:minimal-to-regular}
If \(K\) is a small \(\sigma\)-\(\Delta\)-field and \(p \in S_{x,\Delta}(K)\) is 
a minimal type over \(K\), then 
\(p\) is regular.
\end{prop}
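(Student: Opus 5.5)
The plan is to verify Definition~\ref{def:regular} directly for \(A = K\) and \(\ind = \ind^\Delta\), using minimality of \(p\) to control both the realization that forks with \(B\) and the one that does not. Fix a parameter set \(B \supseteq K\) and realizations \(a\), \(b\) of \(p\) with \(a \ind^\Delta_K B\) and \(b \nind^\Delta_K B\). Set \(L := \langle B \rangle_\Delta\), a small \(\Delta\)-field containing \(K\). The goal is \(a \ind^\Delta_B b\), i.e.\ \(\langle B\cup\{a\}\rangle_\Delta\) algebraically independent from \(\langle B \cup \{b\}\rangle_\Delta\) over \(L\).

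The first step is to show that \(b \in \acl^\Delta(L)\). Minimality says \(p\) has a unique non-algebraic extension \(q \in S_{x,\Delta}(L)\). I claim \(q\) is the nonforking extension of \(p\). The type \(\operatorname{tp}_\Delta(a/L)\) extends \(p\), and it is non-algebraic: were \(a\) algebraic over \(L\), then \(a \ind^\Delta_K L\) would force the components of \(a\) into \(\acl^\Delta(K)\), so \(p\) would be algebraic, contradicting minimality. Hence \(q = \operatorname{tp}_\Delta(a/L)\), which does not fork over \(K\). On the other hand \(b \nind^\Delta_K L\), so \(\operatorname{tp}_\Delta(b/L) \neq q\). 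This uses that the quantifier-free type over \(L\) determines forking, which follows from Proposition~\ref{prop-nonforking-ideal} after passing to algebraic closures, together with the fact that the ideal \(\mathfrak{p}\) determines the type. Since \(\operatorname{tp}_\Delta(b/L)\) is an extension of \(p\) different from the unique non-algebraic one, it is algebraic, so every coordinate of \(b\) lies in \(L^{\operatorname{alg}}\).

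The second step concludes from this. Because \(b \in L^{\operatorname{alg}}\), the field \(\langle B \cup \{b\}\rangle_\Delta\) is algebraic over \(L\), since the derivations extend uniquely to algebraic extensions and so preserve \(L^{\operatorname{alg}}\). Any field is algebraically independent over \(L\) from an algebraic extension of \(L\). Therefore \(a \ind^\Delta_B b\), and indeed anything is independent from \(b\) over \(B\).

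The main obstacle I expect is the bookkeeping in the first step. We need the nonforking extension to be non-algebraic and to coincide with the unique non-algebraic extension, and we need care when \(K\) is not algebraically closed. There I would replace \(K\) by \(\acl^\Delta(K)\): independence is insensitive to this replacement, and \(p\) has only finitely many extensions to \(\acl^\Delta(K)\), each still minimal. I would also remark that the paper's definition of regular implicitly presumes \(p\) non-algebraic, which minimality supplies.
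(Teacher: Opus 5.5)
Your proposal is correct and follows essentially the same route as the paper: minimality identifies $\operatorname{tp}_\Delta(a/L)$, for $L=\langle B\rangle_\Delta$, as the unique non-algebraic extension of $p$, which forces $b\in L^{\operatorname{alg}}$, and the independence then follows at once (the paper phrases this last step as $a\ind^\Delta_K B\cup\{b\}$ followed by base monotonicity, whereas you observe directly that anything is independent from $b$ over $B$). You also supply the justification, left implicit in the paper, that $\operatorname{tp}_\Delta(a/L)$ is non-algebraic; your fallback of replacing $K$ by $\acl^\Delta(K)$ is unnecessary, since no step uses algebraic closedness of $K$ (and minimality applied to $L=K^{\operatorname{alg}}$ already gives $p$ a unique extension there, not merely finitely many).
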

\begin{proof}
Let \(K \subseteq B \subseteq \mathbb{U}\)
and \(a\) and \(b\) be two realizations
of \(p\) where \(B\) is a small set, 
\(a \ind_K^\Delta B\) and \(b \nind_K^\Delta B\).  
As noted before, the independence
\(a \ind_K^\Delta B\) means 
\(a \ind_K^\Delta B\) or from 
the definition \( \langle K \cup \{a\} \rangle_\Delta\) is algebraically 
independent from \(L := \langle B \rangle_\Delta\) over \(K\).  
Since algebraic independence is 
insensitive to replacing a field
by its algebraic closure, we 
have \(a \ind_K^\Delta L^{\operatorname{alg}}\).
On the other hand, since \(b \nind_K^\Delta B\), 
we have that \(b \nind_K^\Delta L\). 
By minimality of \(p\), 
\(\operatorname{tp}(a/L)\) is the
unique nonalgebraic extension of \(p\)
to \(S_{x,\Delta}(L)\). As 
\(\operatorname{tp}(b/L)\) is a
different extension of \(p\) to 
\(S_{x,\Delta}(L)\), it must 
be algebraic.  That is, the 
components of \(b\) belong to 
\(\acl^\Delta(B) = L^{\operatorname{alg}}\).  Thus, 
as \(a \ind_K^\Delta L^{\operatorname{alg}}\)
and \(B \cup \{ b \} \subseteq L^{\operatorname{alg}} \), we conclude \(a \ind_K^\Delta B \cup \{ b \} \), 
and hence \(a \ind_B^\Delta b\) by transitivity or base monotonicity, as
required.
\end{proof}

Let us draw some useful conclusions
from Lemma~\ref{lemma-trivialregular}.

\begin{cor}
\label{cor:trivial-field-of-definition-of-dependence}
Suppose that \(p\) and \(q\) are
trivial regular types over the set
of parameters \(A\), \(A \subseteq B\), 
\(a\) is a realization of \(p\) and 
\(b\) is a realization of \(q\), 
\(a \ind_A B\), \(b \ind_A B\), but
\(a \nind_B b\), then \(a \nind_A b\).
\end{cor}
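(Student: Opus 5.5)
Since \(a \nind_B b\) and \(a \ind_A B\), transitivity forces a dependence between \(a\) and the set \(B \cup \{b\}\) over \(A\). I then split that dependence using Lemma~\ref{lemma-trivialregular}: take \(B_0 := B\) and \(B_1 := \{b\}\). The lemma needs \(B_0 \ind_A B_1\), which is exactly the hypothesis \(b \ind_A B\) read through symmetry.

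The steps, in order:

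\begin{enumerate}
\item Show \(a \nind_A B \cup \{b\}\). Suppose instead that \(a \ind_A B \cup \{b\}\). By transitivity (the right-to-left direction of Remark~\ref{rk:properties-of-independence}, with base \(A\) and intermediate set \(B\)), this independence is equivalent to having both \(a \ind_A B\) and \(a \ind_B B\cup\{b\}\). In particular \(a \ind_B b\), which contradicts the hypothesis \(a \nind_B b\).

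\item Rewrite \(b \ind_A B\) as \(B \ind_A \{b\}\) using symmetry.

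\item Apply Lemma~\ref{lemma-trivialregular} to the trivial regular type \(p = \operatorname{tp}(a/A)\) with \(B_0 = B\) and \(B_1 = \{b\}\). Steps 1 and 2 supply its hypotheses, so we get \(a \nind_A B\) or \(a \nind_A b\).

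\item The first alternative contradicts the assumption \(a \ind_A B\). Hence \(a \nind_A b\).
\end{enumerate}

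Note that only triviality and regularity of \(p\) are used. The hypotheses on \(q\) enter only through \(b \ind_A B\).

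The one point needing care is that the transitivity rule in Remark~\ref{rk:properties-of-independence} is stated for a chain \(B \subseteq C \subseteq D\) of sets. Here the chain is \(A \subseteq B \subseteq B \cup \{b\}\). If \(A \not\subseteq B\), replace \(B\) by \(A \cup B\); none of the hypotheses or conclusions change. In the general stable setting of the Baldwin–Harrington lemma, the same transitivity and symmetry are standard properties of forking, so the argument goes through verbatim there.
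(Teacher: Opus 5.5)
Your argument is correct and is the paper's proof in direct rather than contrapositive form. The paper likewise applies Lemma~\ref{lemma-trivialregular} with \(B_0 = B\) and \(B_1 = \{b\}\): it notes that \(a \ind_A B\) together with \(a \ind_A b\) would give \(a \ind_A B \cup \{b\}\), and hence, by monotonicity, \(a \ind_B b\). One small slip: in Step 1 you use the left-to-right direction of the transitivity equivalence in Remark~\ref{rk:properties-of-independence} (from \(a \ind_A B \cup \{b\}\) to \(a \ind_B B \cup \{b\}\)), not the right-to-left one. Also, your caveat about \(A \not\subseteq B\) is unnecessary, since \(A \subseteq B\) is a hypothesis.
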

\begin{proof}
By Lemma~\ref{lemma-trivialregular} applied with $B_0 = B$ and $B_1=\{b\}$, 
if \(a \ind_A b\), because \(a \ind_A B\),
we would have \(a \ind_A B \cup \{b\}\).
But then by monotonicity, 
\(a \ind_B b\), contrary to our 
hypothesis.  Hence, 
\(a \nind_A b\).
\end{proof}

The next proposition allows us to 
extend the characteristic 
property of a trivial type -- that 
pairwise independence implies 
independence of a set of realizations --
from one type to several.

\begin{prop}
\label{prop:indep-trivial}
Let \(A\) be a set of parameters  
and let \(p_i \in S_{x_i}(A)\) be 
trivial, regular types over \(A\)
for \(1 \leq i \leq n\).  Suppose that
\(b_i\) is a realization of \(p_i\)
for \(1 \leq i \leq n\) and that 
\(b_i \ind_A b_j\) for 
\(i \neq j\).  Then \(\{b_1, \ldots, 
b_n \}\) is independent as a set.  
That is, \(b_i \ind_A \{b_j : j \neq i \}\) for each \(i \leq n\).
\end{prop}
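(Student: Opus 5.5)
We argue by induction on \(n\), using Lemma~\ref{lemma-trivialregular} as the engine. For \(n \leq 2\) there is nothing to prove: the case \(n=1\) is vacuous and the case \(n=2\) is the hypothesis \(b_1 \ind_A b_2\). So assume \(n \geq 3\) and that the statement holds for any family of fewer than \(n\) pairwise independent realizations of trivial regular types over \(A\). In a stable theory, a finite set \(\{b_1,\ldots,b_n\}\) is independent over \(A\) as soon as \(b_k \ind_A \{b_1,\ldots,b_{k-1}\}\) for each \(k\); this is the standard consequence of symmetry and transitivity recorded in Remark~\ref{rk:properties-of-independence}. Since the hypotheses are symmetric in the indices, it therefore suffices to prove \(b_n \ind_A \{b_1,\ldots,b_{n-1}\}\).

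Suppose instead that \(b_n \nind_A \{b_1,\ldots,b_{n-1}\}\). We split the remaining realizations as \(B_0 := \{b_1,\ldots,b_{n-2}\}\) and \(B_1 := \{b_{n-1}\}\). By the induction hypothesis applied to \(b_1,\ldots,b_{n-1}\), this family is independent over \(A\), so in particular \(B_0 \ind_A B_1\). Now Lemma~\ref{lemma-trivialregular}, applied to the trivial regular type \(p_n\) and its realization \(b_n\), yields \(b_n \nind_A B_0\) or \(b_n \nind_A B_1\).

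The second alternative contradicts the pairwise hypothesis \(b_n \ind_A b_{n-1}\). In the first alternative, \(b_n \nind_A \{b_1,\ldots,b_{n-2}\}\) concerns the family \(b_1,\ldots,b_{n-2},b_n\) of \(n-1\) pairwise independent realizations. By the induction hypothesis this family is independent, which gives \(b_n \ind_A \{b_1,\ldots,b_{n-2}\}\), again a contradiction. Hence \(b_n \ind_A \{b_1,\ldots,b_{n-1}\}\), and the set is independent.

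The only delicate point is matching the hypotheses of Lemma~\ref{lemma-trivialregular}. That lemma is stated for a single type \(p\) over \(A\) and arbitrary sets \(B_0, B_1\) with \(B_0 \ind_A B_1\). The other realizations \(b_i\) may realize different types \(p_i\), but they enter only through the parameter sets \(B_0\) and \(B_1\), so the lemma applies unchanged. What must be checked is that triviality and regularity of \(p_n\) in the sense of \cite{Baldwin-Harrington} are what is assumed here. Triviality of the other \(p_i\) is never used directly; it enters only through the induction hypothesis applied to the smaller families.
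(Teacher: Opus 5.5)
Your proof is correct and follows the paper's argument. Both use induction on \(n\), split the remaining realizations into a singleton and the rest, get \(B_0 \ind_A B_1\) and \(b_n \ind_A\) (larger block) from the induction hypothesis, and conclude by Lemma~\ref{lemma-trivialregular}; the only difference is that your labels \(B_0\) and \(B_1\) are swapped relative to the paper's.
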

\begin{proof}
We prove this result by induction 
on \(n\).  The cases of \(n \leq 2\)
are trivial.  Consider the inductive 
case of \(n+1\).  If 
\( \{b_1, \ldots,b_{n+1}\}\) were
not independent, then possibly 
at the cost of reordering the types, 
we would have \(b_{n+1} \nind_A \{b_1, 
\ldots, b_{n}\}\).  Take \(B_0 := 
\{ b_n \}\) and \(B_1 := \{ b_1, 
\ldots, b_{n-1}\}\). By our 
inductive hypothesis, 
\(B_0 \ind_A B_1\).  We also 
have \(b_{n+1} \ind_A B_0\), by our
hypothesis of pairwise
independence, and \(b_{n+1} \ind_A B_1\)
from the inductive hypothesis.  
Lemma~\ref{lemma-trivialregular}
gives that 
\(b_{n+1} \ind_A B_0 \cup B_1\), 
that is, \(b_{n+1} \ind_A 
\{b_1, \ldots, b_n \}\), contrary
to our supposition.  Hence, 
\(\{b_1, \ldots, b_{n+1}\}\)
is independent and we conclude.
\end{proof}

Let us use 
Proposition~\ref{prop:indep-trivial}
to characterize the possible 
relations between totally disintegrated
types.

\begin{prop}
\label{prop:dependence-totally-disintegrate}
Let \(K \subseteq \mathbb{U}\)
be a small algebraically closed \(\sigma\)-\(\Delta\)-field
and let \(p \in S_{\Delta}(K)\) and 
\(q \in S_{\Delta}(K)\) be two 
minimal, totally disintegrated
types over \(K\) relative to $\DCF$.  If 
\(p \not \perp q\), then there 
are \(\Delta\)-rational maps
\(f\) and \(g\) defined over \(K\) so that
for \(a\) any realization of \(p\), 
\(f(a)\) is a realization of \(q\),  
for \(b\) any realization of \(q\), 
\(g(b)\) is a realization of \(p\), and 
\(a = g(f(a))\) and \(b = f(g(b))\).
Moreover, there is at most one 
such pair of \(\Delta\)-rational 
maps on the types for \(p\) and \(q\).
 \end{prop}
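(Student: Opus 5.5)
Since \(p \not\perp q\), I first want to move the non-orthogonality down to \(K\) and realize it as weak non-orthogonality. Take a small \(\Delta\)-field \(L \supseteq K\), nonforking extensions \(\widetilde p,\widetilde q\), and realizations \(a \models \widetilde p\), \(b \models \widetilde q\) with \(a \nind^\Delta_L b\). Because \(K\) is algebraically closed, \(p\) and \(q\) are stationary. We have \(a \ind^\Delta_K L\) and \(b \ind^\Delta_K L\) but \(a \nind^\Delta_L b\). Corollary~\ref{cor:trivial-field-of-definition-of-dependence} applies, since totally disintegrated minimal types are trivial, and by Proposition~\ref{prop:minimal-to-regular} they are regular. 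It gives \(a \nind^\Delta_K b\), with \(a\models p\), \(b \models q\).

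By minimality of \(q\), the type \(\operatorname{tp}(b/\langle K,a\rangle_\Delta)\) is then algebraic, and symmetrically \(a\) is algebraic over \(\langle K,b\rangle_\Delta\). Thus \(\acl^\Delta(Ka)=\acl^\Delta(Kb)\), in line with Remark~\ref{rk:alg-non-orthogonal}.

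Next I upgrade \emph{algebraic} to \emph{rational} using total disintegration. Let \(b=b_1,\dots,b_r\) be the realizations of \(\operatorname{tp}(b/\langle K,a\rangle_\Delta)\); there are finitely many because the type is algebraic. Each \(b_i\) realizes \(q\), and they are pairwise distinct. If \(r\ge 2\), total disintegration of \(q\) gives \(b_1 \ind^\Delta_K b_2\). But \(b_2 \in \acl^\Delta(Ka)=\acl^\Delta(Kb_1)\), and \(b_2\) is not algebraic over \(K\) because \(q\) is non-algebraic. This is a contradiction, so \(r=1\). Hence \(b\) is fixed by every automorphism of \(\mathbb U\) over \(\langle K,a\rangle_\Delta\), so \(b \in \operatorname{dcl}^\Delta(Ka)=\langle K,a\rangle_\Delta\) (in characteristic zero, for \(\DCF\)). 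So \(b=f(a)\) for some \(\Delta\)-rational map \(f\) over \(K\). Symmetrically, \(a=g(b)\) for some \(\Delta\)-rational \(g\) over \(K\).

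Since \(p\) is a complete type over \(K\) and \(f,g\) are defined over \(K\), this transfers to all realizations. The formulas ``\(f(x)\) is defined, \(g(f(x))\) is defined and equals \(x\), and \(f(x)\models q\)'' lie in \(p\), because they hold of \(a\) and \(\operatorname{tp}(f(a)/K)=q\). Thus they hold for every realization \(a'\) of \(p\), and symmetrically for \(q\).

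For uniqueness, suppose \((f',g')\) is another such pair. Then \(f(a)\) and \(f'(a)\) both realize \(q\) and lie in \(\langle K,a\rangle_\Delta\). If \(f(a)\neq f'(a)\), total disintegration makes them independent over \(K\), yet both are interalgebraic with \(a\) over \(K\). That is a contradiction, so \(f(a)=f'(a)\). Since \(a\) is arbitrary, \(f\) and \(f'\) agree on \(p\), meaning the difference of numerators and denominators lies in \(\mathfrak p(p)\); likewise \(g=g'\) on \(q\).

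I expect the main obstacle to be the first step: checking that Corollary~\ref{cor:trivial-field-of-definition-of-dependence} applies. This means confirming that the nonforking extensions are the ones with \(a \ind_K L\) and that triviality holds in the Baldwin–Harrington sense. If that is awkward, I would instead take \(L\) and realize \(a\) independently over \(K\) from \(Lb\), then use Lemma~\ref{lemma-trivialregular} directly.
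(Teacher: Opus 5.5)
Your proposal is correct and follows essentially the same route as the paper. Like the paper, you use Corollary~\ref{cor:trivial-field-of-definition-of-dependence} to get \(a \nind_K^\Delta b\), then total disintegration to rule out a second conjugate of \(b\) over \(K\langle a\rangle_\Delta\), and the same total-disintegration argument for uniqueness; the paper phrases the middle step with a field automorphism in \(\operatorname{Gal}(K\langle a\rangle_\Delta^{\operatorname{alg}}/K\langle a\rangle_\Delta)\) rather than via \(\operatorname{dcl}\), and your explicit transfer from one realization to all realizations via completeness of \(p\) is a detail the paper leaves implicit.
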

 \begin{proof}
 Let \(L \supseteq K\) be a 
 small \(\Delta\)-field extension of \(K\),
 \(a\) a realization of \(p\), and \(b\)
 a realization of \(q\) witnessing that 
 \(p \not \perp q\).  That is, \(a \ind_K^\Delta L\), 
 \(b \ind_K^\Delta L\), but \(a \nind_L^\Delta b\).
 By Corollary~\ref{cor:trivial-field-of-definition-of-dependence}, 
 \(a \nind_K^\Delta b\).  Thus, by 
 Remark~\ref{rk:alg-non-orthogonal},
 we would have \(\acl^\Delta(K \cup \{a\}) = 
 \acl^\Delta(K \cup \{b\})\). Let us 
 check that, in fact, \(K \langle a \rangle_\Delta = K \langle b \rangle_\Delta\).
 If \(b \notin K \langle a \rangle_{\Delta}\), then 
 there would be some \(b' \neq b\) and
 \(\tau\in\operatorname{Gal}(K \langle a \rangle_{\Delta}^{\operatorname{alg}} / K \langle a \rangle_\Delta)\) such that $\tau(b)=b'$.  As the \(\Delta\)-field structure
 on any \(\Delta\)-field of characteristic
 zero extends uniquely to an algebraic 
 closure, $\tau$ is also an automorphism of 
 \(\Delta\)-fields.  Hence, \(b'\) is also
 a realization of \(q\).  By total disintegration of \(q\), \(b \ind_K^\Delta b'\).  
 However, \( \operatorname{acl}^\Delta(K \cup \{ b \} ) = \operatorname{acl}^\Delta(K \cup \{a \} ) =\tau(\operatorname{acl}^\Delta(K \cup \{ b \} ))= \operatorname{acl}^\Delta(K \cup \{b'\})\) so that \(b \nind_K^\Delta b'\).  
 With this we conclude that \(b\) is a 
 \(\Delta\)-rational function \(f\) of \(a\).  
 The situation is symmetric between
 \(p\) and \(q\) and we see that \(a\) is
 a differential rational function \(g\) of \(b\). 

We see that there cannot be a second differential rational map \(h\) taking realizations of \(p\) to realizations of \(q\) for if there were, then \(\{b, h(g(b)) \}\) 
would be a set of two distinct realizations 
of \(q\) which are dependent over \(K\), 
contrary to total disintegration.  Likewise,
we cannot have another differential
rational function taking realizations
of \(q\) to realizations of \(p\).
 \end{proof}

In many cases of interest, the minimal types we consider come from \emph{strongly 
minimal sets}.

\begin{definition}
\label{def:strongly-minimal}
A definable set $X$ is \emph{strongly minimal} if
$X(\mathbb{U})$ is infinite
and every definable subset is either finite or its complement in 
\(X\) is finite.
\end{definition}

\begin{example}
The field of constants \(\mathsf{C} = \mathbb{U}^\Delta := \{ a \in \mathbb{U} :
\delta_i(a) = 0 \text{ for } 1 \leq i \leq m \}\) is a strongly minimal set
relative to \(\DCF\).
Indeed, by quantifier elimination, the structure induced on \(\mathsf{C}\) by 
differential equations is just that of a pure algebraically closed field
of characteristic zero.  The definable subsets of \(\mathsf{C}\) being 
finite Boolean combinations of the zero sets of ordinary polynomials in one 
variable are all finite or cofinite.  On the other hand, if we consider 
\(\mathsf{C}\) relative to the theory \(\DCFA\) it is \emph{not} strongly minimal.  
For example, the set defined by adjoining the difference equation 
\(\sigma(x) = x\) is infinite as is its complement in \(\mathsf{C}\).
\end{example}

\begin{example}
In the case that $n = 1$ and $m = 1$, 
that is, we are considering 
ordinary differential equations in one variable,
it is easy to 
see that if \(K\) is algebraically closed and 
$P \in K\{x\}_\Delta$ is an irreducible 
$\Delta$-polynomial of order one (that is, $P = F(x,\delta_1 x)$ for some irreducible 
polynomial $F(u,v) \in K[u,v]$ with $\deg_v F > 0$), then the equation 
$P(x) = 0$ defines a strongly minimal set relative to \(\DCF\).
\end{example}

\begin{remark} \label{remark-partialtype}
It is easy to check that if \(X\) is a strongly minimal set defined over the \(\ast\)-field
\(K\), then the partial type \(p_{X,K}\) defined by \(x \in X\) and \(P(x) \neq 0\) for 
each \(P \in K \{x\}_\ast\) for which \(\{ a \in X(\mathbb{U}) : P(a) = 0 \}\) is
finite is minimal.  

In some cases of interest, the formula \(x \in X\) already 
defines a complete minimal type.  For example, this is true when we take 
\(K = \mathbb{Q}\), \(m = 1\), and \(X\) defined by \(\delta_1(x) = x^3 - x^2 \land x \neq 0 \land x \neq 1\).  
\end{remark}

Our notion of a totally disintegrated 
minimal type is related to, though distinct from, 
that of a \emph{strictly minimal set} 
defined in~\cite{Cherlin1985}
and used extensively in~\cite{HrIt}:
 
\begin{definition} \label{def:strictly-minimal-set}
A definable set \(X\), defined over a parameter set \(A\), is \emph{strictly minimal} if it is strongly minimal, its induced structure is \(\aleph_0\)-categorical, meaning that for each \(n\) there are only finitely many \(A\)-definable subsets of \(X^n\), and \(X\) admits no   \(A\)-definable equivalence relations other than equality and the universal relation.
\end{definition}

\begin{lemma}
\label{lm-strictminimal}
Work in a sufficiently saturated model of $\mathrm{DCF}_{0,m}$.
Let $X$ be strictly minimal over $A$, and let $p_{X,A}$
be its generic type over $A$.
Then $p_{X,A}$ is isolated by the formula defining $X$
and is totally disintegrated.
\end{lemma}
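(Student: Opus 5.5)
Both claims will come from the interaction of $\aleph_0$-categoricity with the absence of nontrivial $A$-definable equivalence relations. I work in a saturated $\mathbb{U}\models\DCF$ and replace $A$ by $K:=\acl^\Delta(A)$ where convenient. Let $\varphi$ be the formula defining $X$.

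\emph{Isolation.} Since $X$ is strongly minimal, every $A$-definable subset of $X$ is finite or cofinite. The generic type $p_{X,A}$ is determined by $\varphi$ together with the negations of the finite $A$-definable subsets of $X$. By $\aleph_0$-categoricity of the induced structure there are only finitely many $A$-definable subsets of $X$, so only finitely many finite ones. Let $F$ be their union; it is a finite $A$-definable set. Then $\varphi(x)\wedge x\notin F$ isolates $p_{X,A}$.

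To see that $\varphi$ alone suffices, I show $F=\emptyset$. If $F\neq\emptyset$, the relation $E(x,y)\iff (x\in F \leftrightarrow y\in F)$ is an $A$-definable equivalence relation on $X$. It has exactly two classes, one finite and nonempty, one infinite. So $E$ is neither equality (since $X$ is infinite) nor universal, contradicting strict minimality. Hence $X$ contains no algebraic points over $A$, and $\varphi$ isolates $p_{X,A}$. I should also check that $p_{X,A}$, taken as a type over $A$ (or over $K$), is the unique non-algebraic completion. Here I would use that every element of $X$ realizes $\varphi$ and is non-algebraic over $A$, and that there is no nontrivial $A$-definable splitting of $X$.

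\emph{Total disintegration.} Take distinct $a,b\in X$; I must show $a\ind^\Delta_A b$. Suppose not. By strong minimality, $b\in\acl^\Delta(Aa)$, so $b$ lies in some finite $A a$-definable subset of $X$. By $\aleph_0$-categoricity there are only finitely many $A$-definable subsets of $X^2$. Hence the relation $R(x,y)$, meaning ``$y\in\acl(Ax)$'' restricted to $X$, is $A$-definable: it is the union of the finitely many $A$-definable $D\subseteq X^2$ whose fibres $D_x$ are finite. Uniformity of the fibre bound follows from elimination of $\exists^\infty$ in $\DCF$.

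Since $p_{X,A}$ is complete and every element of $X$ realizes it, I can run the usual argument. In a strongly minimal set, algebraic closure restricted to $X$ is a pregeometry, and pairs of non-algebraic points have the form $\{a\}$ with $\acl(Aa)\cap X$ finite. Define $E(x,y)\iff \acl(Ax)\cap X=\acl(Ay)\cap X$. By exchange, $y\in\acl(Ax)\setminus\acl(A)$ gives $x\in\acl(Ay)$, so $E$ is the reflexive closure of $R$ on $X$, which is symmetric and transitive. It is $A$-definable because $R$ is. $E$ is not universal, since its classes are finite and $X$ is infinite. So $E$ is equality, and thus $a\nind b$ forces $a=b$. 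This gives pairwise independence of distinct realizations.

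Finally, I need independence of arbitrary sets of realizations, not just pairs. Here I would show that $\acl$ on $X$ is trivial: $\acl(AS)\cap X=S$ for $S\subseteq X$. This again uses finiteness of $A$-definable subsets of $X^{n+1}$. If $c\in\acl(Aa_1\dots a_n)$, consider the $A$-definable relation ``$y$ algebraic over $x_1,\dots,x_n$'' and a minimal witnessing tuple. I expect this step to be the main obstacle. The route I would try is via Proposition~\ref{prop:indep-trivial}, once triviality of the geometry is known. Triviality itself follows by a standard result that $\aleph_0$-categorical strongly minimal sets in $\DCF$ are trivial, since they are orthogonal to the constants (not $\aleph_0$-categorical) and to Manin kernels (not locally finite) by the trichotomy. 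With triviality and the pairwise statement, Proposition~\ref{prop:indep-trivial} and Proposition~\ref{prop:minimal-to-regular} give full independence.
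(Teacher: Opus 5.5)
Your proposal is correct and follows essentially the same route as the paper. A nonempty proper $A$-definable subset of $X$ gives a nontrivial $A$-definable equivalence relation, which yields isolation (the paper uses an arbitrary such subset directly, so your detour through the union $F$ via $\aleph_0$-categoricity is unnecessary). The relation $y\in\acl^\Delta(A,x)$ on $X$ is then an $A$-definable equivalence relation with finite classes, hence equality. Full independence follows from this pairwise statement together with the geometric triviality of $\aleph_0$-categorical strongly minimal sets in $\DCF$. The only difference is that the paper cites this triviality fact (Nagloo--Pillay, extended to several derivations via Pillay) rather than sketching it from the trichotomy, so your abandoned attempt to prove triviality of $\acl$ on $X$ directly is not needed.
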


\begin{proof}
Suppose that $Y$ is a nonempty proper $A$-definable subset
of $X$. Then
\[
E_Y(x,y)
\quad\Longleftrightarrow\quad
\bigl(x\in Y \land y\in Y\bigr)
\lor
\bigl(x\notin Y \land y\notin Y\bigr)
\]
defines an equivalence relation on $X$ that is neither
equality nor the universal relation, contradicting strict
minimality. Thus $X$ has no nonempty proper $A$-definable
subset, so the formula defining $X$ isolates $p_{X,A}$.

Moreover, \(
X\cap\operatorname{acl}^{\Delta}(A)=\varnothing.
\)
Indeed, an element of this intersection would belong to
a nonempty finite $A$-definable subset of $X$, a contradiction.

We also use the fact that an $\aleph_0$-categorical strongly
minimal set in $\mathrm{DCF}_{0,m}$ is geometrically trivial (see~\cite[Lemma 2.23]{Nagloo–Pillay-2017} whose proof extends to 
several commuting derivations using~\cite[Corollary 4.2(ii)]{Pillay-foundational}). For $x,y\in X$, define
\[x\sim y
\quad\Longleftrightarrow\quad
y\in\operatorname{acl}^{\Delta}(A,x).
\]
Since every element of $X$ is nonalgebraic over $A$,
 $\sim$ is an equivalence relation.  
By $\aleph_0$-categoricity, this  relation
is $A$-definable, and each of its classes
\(
\operatorname{acl}^{\Delta}(A,x)\cap X
\)
is finite. As $X$ is infinite, $\sim$ is not the universal
relation. Strict minimality therefore implies that
$\sim$ is equality and for each $x\in X,$
\(\operatorname{acl}^{\Delta}(A,x)\cap X=\{x\}\). 
Since $p_{X,A}$ is geometrically trivial, pairwise independence of a set of distinct realizations implies its independence.
Thus,  $p_{X,A}$ is totally disintegrated.
\end{proof}

As above, if we take a larger parameter set 
\(B \supseteq A\), then \(X\) might not be 
strictly minimal when regarded as 
a \(B\)-definable set, as, for example, 
there may be elements of \(X\) that are
algebraic over \(B\), but the generic 
type \(p_{X,B}\) of \(X\) over \(B\) is 
still totally disintegrated. Indeed, if $a\neq b$ realize $p_{X,B}$ and $a\nind_B b$, then $a\nind_A b$  
by Corollary~\ref{cor:trivial-field-of-definition-of-dependence}, contradicting total disintegration over $A$.

\section{Preservation of minimality for trivial types}

\vskip5pt
The following theorem is our main result in this section:
\begin{theorem} \label{thm-trivialminimal}
Suppose $K$ is an algebraically closed  \(\sigma\)-\(\Delta\)-field.
Let $p \in S_{x,\Delta}(K)$ be a  minimal, trivial
type relative to \(\DCF\). Then 
relative to \(\DCFA\), every completion of \(p\) 
is minimal as a quantifier-free type. 
\end{theorem}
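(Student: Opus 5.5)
Fix a completion \(\widetilde{p} \in S_{x,\sigma\text{-}\Delta}(K)\) of \(p\), realized by \(a\), and a small \(\sigma\)-\(\Delta\)-field \(L \supseteq K\), which we may take algebraically closed. We must show that \(\widetilde{p}\) has at most one non-algebraic extension to \(L\). Since \(K\) is algebraically closed, Proposition~\ref{prop-nonforking-ideal} (with \(\ast = \sigma\text{-}\Delta\)) shows that a non-forking extension is determined by the ideal \(I_{\sigma\text{-}\Delta}(a/K)\). It therefore suffices to prove the following: if \(b\) realizes \(\widetilde{p}\) and \(b \notin L^{\operatorname{alg}}\), then \(b \ind^{\sigma\text{-}\Delta}_K L\). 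Unwinding the definitions, this says that \(K\langle b\rangle_{\sigma\text{-}\Delta}\), which is generated as a field by the \(\Delta\)-fields \(K\langle \sigma^n(b)\rangle_\Delta\) for \(n \in \mathbb{Z}\), is algebraically independent from \(L\) over \(K\). We thus reduce to a statement about \(\Delta\)-independence of the tuple \((\sigma^n(b))_{n\in\mathbb{Z}}\) from \(L\), working in \(\DCF\).

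Each \(\sigma^n(b)\) realizes \(p^{\sigma^n} = p\) as a \(\Delta\)-type, because \(K\) is \(\sigma\)-invariant and \(\sigma\) is a \(\Delta\)-automorphism of \(\mathbb{U}\) (here \(p^{\sigma^n}\) may differ from \(p\), but it is again a minimal trivial \(\Delta\)-type over \(K\), and that is all we use). The key step is this: \(\sigma^n(b) \ind^\Delta_K L\) for every \(n\). Indeed, \(L\) is \(\sigma\)-invariant, so applying \(\sigma^{-n}\) shows that \(\sigma^n(b) \in L^{\operatorname{alg}}\) if and only if \(b \in L^{\operatorname{alg}}\). By minimality of \(p^{\sigma^n}\), a realization is either algebraic over \(L\) or independent from \(L\) over \(K\). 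Hence each \(\sigma^n(b)\) is \(\Delta\)-independent from \(L\) over \(K\).

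It remains to pass from ``each \(\sigma^n(b)\) is independent from \(L\)'' to ``the whole sequence is independent from \(L\)''. Choose a maximal subset \(S\) of \(\{\sigma^n(b)\}\) that is \(\Delta\)-independent over \(K\), in the sense that it is pairwise independent and hence independent as a set by Proposition~\ref{prop:indep-trivial}. By minimality every other \(\sigma^n(b)\) lies in \(\acl^\Delta(K \cup S)\), so it suffices to show \(S \ind^\Delta_K L\). For finite subsets this follows by induction from Lemma~\ref{lemma-trivialregular}, applied to the trivial regular types \(p^{\sigma^n}\) (regular by Proposition~\ref{prop:minimal-to-regular}). Suppose \(s_1,\dots,s_k \in S\) satisfy \(\{s_1,\ldots,s_{k-1}\} \ind_K L\) and \(s_k \ind_K L\), but \(s_k \nind_K L \cup \{s_1,\ldots,s_{k-1}\}\). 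Taking \(B_0 = L\) and \(B_1 = \{s_1,\ldots,s_{k-1}\}\), which satisfy \(B_0 \ind_K B_1\), the lemma gives \(s_k \nind_K L\) or \(s_k \nind_K \{s_1,\dots,s_{k-1}\}\). Both alternatives are excluded. Hence \(s_k \ind_K L s_1\cdots s_{k-1}\), and transitivity gives \(\{s_1,\ldots,s_k\} \ind_K L\). By finite character we obtain \(S \ind^\Delta_K L\), hence \(K\langle b\rangle_{\sigma\text{-}\Delta} \ind_K L\) algebraically, which is exactly \(b \ind^{\sigma\text{-}\Delta}_K L\).

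Uniqueness now follows: any two non-algebraic extensions of \(\widetilde{p}\) to \(L\) are non-forking, and they coincide by Proposition~\ref{prop-nonforking-ideal} together with the bijection \(\mathfrak{p}\) over the algebraically closed field \(L\). Such an extension exists because \(p\) is non-algebraic. The main obstacle is the induction step: we must make sure that Lemma~\ref{lemma-trivialregular} applies to the \(\DCF\)-types \(p^{\sigma^n}\) over \(K\), which are stationary since \(K\) is algebraically closed. We must also check that passing through the intermediate set \(S\) loses nothing, that is, that \(\acl^\Delta\) of \(S\) really captures the whole \(\sigma\)-orbit. If the maximal-independent-set bookkeeping becomes awkward, the fallback is to run the same induction directly on the orbit elements \(\sigma^{-N}(b),\ldots,\sigma^N(b)\). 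In that version, whenever \(s_k \nind_K \{s_1,\dots,s_{k-1}\}\), minimality gives \(s_k \in \acl^\Delta(K, s_1,\dots,s_{k-1})\), so independence from \(L\) is inherited.
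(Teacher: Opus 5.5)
Your proposal is correct and follows essentially the same route as the paper. You reduce $b \ind^{\sigma\text{-}\Delta}_K L$ to $\Delta$-independence of the orbit $(\sigma^n(b))_{n\in\mathbb{Z}}$ from $L$, get $\sigma^n(b)\ind^\Delta_K L$ for each $n$ from minimality of the transformed types and $\sigma$-invariance of $L$, and assemble these with Lemma~\ref{lemma-trivialregular} via exactly the induction of Lemma~\ref{lem:independentsets-and-forking}. The only differences are cosmetic: you first pass to a maximal independent subfamily, while the paper, like your fallback, absorbs the algebraic case into the induction and works with a shifted finite segment $a,\sigma(a),\ldots,\sigma^N(a)$.
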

\begin{remark} Looking ahead to the discussion of Lascar rank in Section~\ref{sec:non-trivial-types} and the characterization of forking relative to \(\DCFA\), the conclusion of Theorem~\ref{thm-trivialminimal} may be rephrased by saying that every completion of \(p\) has \(\operatorname{SU}\)-rank one. \end{remark}

We require three simple lemmas before we prove Theorem~\ref{thm-trivialminimal}.

\begin{lemma}
\label{lem:aut-preserves-minimality-and-triviality}
Let \(K\) be a sub-\(\sigma\)-\(\Delta\)-field of
\(\mathbb{U}\), \(a\) a 
tuple from \(\mathbb{U}\), and
\(N \in \mathbb{Z}\) an integer.  
If \(\operatorname{tp}_{\Delta}(a/K)\) is minimal, then so is \(\operatorname{tp}_{\Delta}(\sigma^N(a)/K)\).
Likewise, if  \(\operatorname{tp}_{\Delta}(a/K)\) is trivial, then so is \(\operatorname{tp}_{\Delta}(\sigma^N(a)/K)\).
\end{lemma}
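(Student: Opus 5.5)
The plan is to transport everything along the automorphism \(\sigma^N\) of \(\mathbb{U}\). Note that \(\sigma^N\) is an automorphism of the \(\Delta\)-field \(\mathbb{U}\): it commutes with each \(\delta_i\). It maps \(K\) onto \(\sigma^N(K)\), which is \(K\) itself since \(K\) is an inversive \(\sigma\)-\(\Delta\)-field. It is not an automorphism over \(K\), so \(\operatorname{tp}_\Delta(\sigma^N(a)/K)\) is not literally \(\operatorname{tp}_\Delta(a/K)\). Instead, it is the conjugate type: \(P\in K\{x\}_\Delta\) vanishes at \(\sigma^N(a)\) exactly when \(P^{\sigma^{-N}}\) vanishes at \(a\), where \(P^{\tau}\) denotes \(P\) with \(\tau\) applied to its coefficients. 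So the first step is to record this correspondence. More generally, for any small \(\Delta\)-field \(L\supseteq K\) and any tuple \(b\), the map \(\operatorname{tp}_\Delta(b/L)\mapsto\operatorname{tp}_\Delta(\sigma^N(b)/\sigma^N(L))\) is a bijection. It carries extensions of \(\operatorname{tp}_\Delta(a/K)\) to extensions of \(\operatorname{tp}_\Delta(\sigma^N(a)/K)\), and it preserves algebraicity, because \(\sigma^N\) maps \(L^{\operatorname{alg}}\) onto \(\sigma^N(L)^{\operatorname{alg}}\).

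For minimality, let \(L\) be a \(\Delta\)-field with \(K\subseteq L\subseteq\mathbb{U}\) (the definition quantifies over \(\ast\)-fields; the argument is the same if one restricts to \(\sigma\)-\(\Delta\)-fields, since \(\sigma^{-N}\) preserves that class). Suppose \(q_1,q_2\in S_{x,\Delta}(L)\) are non-algebraic extensions of \(\operatorname{tp}_\Delta(\sigma^N(a)/K)\), realized by \(c_1,c_2\). Pull back by \(\sigma^{-N}\). Then \(\sigma^{-N}(c_1)\) and \(\sigma^{-N}(c_2)\) realize \(\operatorname{tp}_\Delta(a/K)\): if \(P\in K\{x\}_\Delta\), then \(P(\sigma^{-N}c_j)=\sigma^{-N}(P^{\sigma^N}(c_j))\), and \(P^{\sigma^N}(c_j)=0\) iff \(P^{\sigma^N}(\sigma^N a)=0\) iff \(P(a)=0\). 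Their types over \(L':=\sigma^{-N}(L)\supseteq K\) are non-algebraic. By minimality of \(\operatorname{tp}_\Delta(a/K)\) these two types over \(L'\) coincide, so for each \(Q\in L'\{x\}_\Delta\) we have \(Q(\sigma^{-N}c_1)=0\) iff \(Q(\sigma^{-N}c_2)=0\). Applying \(\sigma^N\) gives \(q_1=q_2\). Existence of a non-algebraic extension transfers in the same way. Alternatively, one can use Remark~\ref{remark:alg-minimality}, since \(K\) is algebraically closed in our applications: a \(\Delta\)-equation with infinitely many solutions in \(p(\mathbb{U})\) pulls back to one with infinitely many solutions in \(\sigma^{-N}(p(\mathbb U))\).

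For triviality, the plan is to use that \(\sigma^N\) preserves \(\ind^\Delta\) in the form \(A\ind^\Delta_K B\iff\sigma^N(A)\ind^\Delta_{K}\sigma^N(B)\). This holds because \(\sigma^N\) maps \(\langle A\cup K\rangle_\Delta\) onto \(\langle\sigma^N(A)\cup K\rangle_\Delta\), using \(\sigma^N(K)=K\), and field automorphisms preserve algebraic independence. Now let \(S\) be a pairwise independent set of realizations of \(\operatorname{tp}_\Delta(\sigma^N(a)/K)\). Then \(\sigma^{-N}(S)\) is a pairwise independent set of realizations of \(\operatorname{tp}_\Delta(a/K)\), hence independent by triviality, and applying \(\sigma^N\) gives independence of \(S\).

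The only point requiring care is the bookkeeping that \(\sigma^N(K)=K\) while \(\sigma^N\) does not fix \(K\) pointwise. Types must therefore be conjugated rather than preserved, and the transported extension fields \(\sigma^{-N}(L)\) must still contain \(K\). Both facts follow from inversivity of \(K\).
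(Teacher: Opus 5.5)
Your proof is correct and takes the same approach as the paper: transport everything along the \(\Delta\)-automorphism \(\sigma^N\), using \(\sigma^N(K)=K\), with your triviality argument essentially identical to the paper's. The only difference is in the minimality part. The paper transports a dependence between \(\sigma^N(a)\) and \(L\) to one between \(a\) and \(\sigma^{-N}(L)\) and concludes via ``forking implies algebraic.'' You instead conjugate complete types directly, which also checks the uniqueness clause of the definition without tacitly relying on stationarity.
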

\begin{proof}
These invariance properties follow easily from the fact that \(\sigma\) preserves 
\(\ind^\Delta\).  Let us elaborate, considering first the minimality.
Let \(L\) be a \(\Delta\)-field extending \(K\) and suppose that 
\(\sigma^N(a) \nind_K^\Delta L\).  Because \(\sigma\), and thus also \(\sigma^N\), is 
an automorphism of \(\mathbb{U}\) as a \(\Delta\)-field, we have 
\(a \nind_K^\Delta \sigma^{-N}(L)\).  Since \(\operatorname{tp}(a/K)\) is minimal, it must be 
that \(a\) is algebraic over \(\sigma^{-N}(L)\).  Applying 
\(\sigma^N\) and observing that \(\sigma^N\) preserves algebraicity, we see that 
\(\sigma^N(a)\) is algebraic over \(L\).  Likewise, because \(a\) is not algebraic over 
\(K\), neither is \(\sigma^N(a)\).  That is, we have demonstrated that \(\operatorname{tp}(\sigma^N(a)/K)\)
is minimal.

The argument for triviality is similar:  suppose that \(\{a_1, \ldots, a_n\}\) is a finite 
list of realizations of \(\operatorname{tp}_{\Delta}(\sigma^N(a)/K)\) and that \(a_i \ind_K^\Delta a_j\) for 
\(1 \leq i < j \leq n\).  Then \(\sigma^{-N}(a_i)\) is a realization of \(\operatorname{tp}(a/K)\) 
for each \(i\) and \(\sigma^{-N}(a_i) \ind_K^\Delta \sigma^{-N}(a_j)\) for 
\(1 \leq i < j \leq n\) as \(\sigma^{-N}\) respects \(\ind_K^\Delta\).  Since
\(\operatorname{tp}_{\Delta}(a/K)\) is trivial, the set \(\{ \sigma^{-N}(a_1), \ldots, 
\sigma^{-N}(a_n) \}\) is \(K\)-independent.  Applying \(\sigma^{N}\) we
see that \(\{a_1, \ldots, a_n\}\) is also \(K\)-independent, verifying that \(\operatorname{tp}_{\Delta}(\sigma^{N}(a)/K)\)
is trivial.
\end{proof}

\begin{lemma}
\label{lem:independentsets-and-forking}
Let \(K \subseteq L\) be an extension of  \(\Delta\)-fields. 
Suppose that \(a_1, \ldots, a_n\) is a finite sequence of tuples from \(\mathbb{U}\) so that for each \(i\), 
\begin{itemize} 
\item \(\operatorname{tp}(a_i/K)\in S_{\Delta}(K)\) is trivial and minimal, and 
\item the tuple \(a_i\) is not algebraic over \(L\). 
\end{itemize}
Then \(a_1, \ldots, a_n \ind_K^\Delta L\). If, in addition, \(\{a_1,\ldots,a_n\}\) is \(K\)-independent, then it is also \(L\)-independent.
\end{lemma}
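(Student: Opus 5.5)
We argue by induction on \(n\), proving the first conclusion \(a_1, \ldots, a_n \ind_K^\Delta L\) and then deducing the second from it.

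For the base case \(n=1\), minimality of \(\operatorname{tp}(a_1/K)\) does the work. If \(a_1 \nind_K^\Delta L\), then \(\operatorname{tp}(a_1/L)\) is an extension of \(p_1 := \operatorname{tp}(a_1/K)\) to \(L\) which differs from the unique non-forking (non-algebraic) extension. By minimality it must therefore be algebraic, so \(a_1\) would be algebraic over \(L\), contrary to hypothesis. Hence \(a_1 \ind_K^\Delta L\).

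For the inductive step, assume \(a_1, \ldots, a_{n-1} \ind_K^\Delta L\) and set \(B := \{a_1, \ldots, a_{n-1}\}\). By transitivity (Remark~\ref{rk:properties-of-independence}) it suffices to show \(a_n \ind^\Delta_{K\cup B} L\), that is, \(a_n \ind_K^\Delta B \cup L\) once we also know \(a_n \ind_K^\Delta L\). The base case gives \(a_n \ind_K^\Delta L\). Suppose toward a contradiction that \(a_n \nind_K^\Delta L \cup B\). We now apply Lemma~\ref{lemma-trivialregular} with \(B_0 := L\) and \(B_1 := B\). This is legitimate: \(p_n\) is trivial by hypothesis and regular by Proposition~\ref{prop:minimal-to-regular}, and \(B_0 \ind_K^\Delta B_1\) is exactly the inductive hypothesis (with symmetry). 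The lemma yields \(a_n \nind_K^\Delta L\) or \(a_n \nind_K^\Delta B\). The first alternative is excluded, so \(a_n \nind_K^\Delta B\).

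The main obstacle is to turn \(a_n \nind_K^\Delta B\) into a contradiction with \(a_n\) not being algebraic over \(L\). The plan is as follows.
\begin{enumerate}
\item Since \(a_n\) and the \(a_i\) are realizations of trivial regular types, apply Lemma~\ref{lemma-trivialregular} iteratively to the dependence \(a_n \nind_K^\Delta \{a_1, \ldots, a_{n-1}\}\). This gives a single \(i<n\) with \(a_n \nind_K^\Delta a_i\).
\item Both types are minimal, so by Remark~\ref{rk:alg-non-orthogonal} (after passing to \(K^{\alg}\) if needed; algebraic closedness is harmless since independence is insensitive to it) we get \(a_n \in \acl^\Delta(K, a_i)\).
\item Minimality of \(\operatorname{tp}(a_i/K)\) also makes \(a_i \in \acl^\Delta(K, a_n)\).
\end{enumerate}
Now combine the inductive hypothesis with the dependence \(a_n \nind_K^\Delta L \cup B\). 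The latter means \(a_n\) is algebraic over \(\langle L\cup B\rangle\), whereas \(a_n\) is interalgebraic with \(a_i\) over \(K\). More directly, use the nonforking calculus: \(a_n \in \acl^\Delta(K a_i)\) and \(a_n \ind_K^\Delta L\). If moreover \(a_n \nind_{KB}^\Delta L\), then, because \(a_n \in \acl(KB)\), this is impossible, since anything algebraic over the base is independent from everything over it. So \(a_n \ind_{KB}^\Delta L\), and transitivity gives \(a_1, \ldots, a_n \ind_K^\Delta L\). In fact this last observation shows that the contradiction route is unnecessary: in the case \(a_n \nind_K^\Delta B\) we simply have \(a_n \in \acl(KB)\), and in the case \(a_n \ind_K^\Delta B\) Lemma~\ref{lemma-trivialregular} gives \(a_n \ind_K^\Delta B\cup L\). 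Either way \(a_n \ind_{KB}^\Delta L\). I would organize the write-up along this case split.

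For the second statement, suppose \(\{a_1,\ldots,a_n\}\) is \(K\)-independent. From the first part, \(a_1,\ldots,a_n \ind_K^\Delta L\). Together with \(K\)-independence, monotonicity and transitivity give, for each \(i\), \(a_i \ind_K^\Delta L \cup \{a_j : j\neq i\}\), and hence \(a_i \ind_L^\Delta \{a_j: j \neq i\}\). Concretely, the transcendence degree of \(K\langle a_1,\ldots,a_n\rangle\) over \(K\) equals the sum of the individual degrees, and each degree is unchanged when passing to \(L\).
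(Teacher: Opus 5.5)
Your final case split in the inductive step is correct and is essentially the paper's proof, as is your transitivity argument for the second assertion: if $a_n \nind^\Delta_K B$, minimality puts $a_n \in \acl^\Delta(K \cup B)$; otherwise Lemma~\ref{lemma-trivialregular} with $B_0 = L$, $B_1 = B$ gives $a_n \ind^\Delta_K B \cup L$; in both cases $a_n \ind^\Delta_{K\langle B\rangle_\Delta} L$, and transitivity finishes. In the write-up, drop the contradiction detour: step (1) is unjustified as stated because Lemma~\ref{lemma-trivialregular} needs $B_0 \ind^\Delta_K B_1$ and the $a_j$ need not be independent, and $a_n \ind^\Delta_{K\langle B\rangle_\Delta} L$ is equivalent to $a_n \ind^\Delta_K B \cup L$ only given $a_n \ind^\Delta_K B$, not given $a_n \ind^\Delta_K L$.
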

\begin{proof}
We first argue by induction on \(n\) to show that \(a_1, \ldots, a_n \ind_K^\Delta L\).
As each  \(a_i\) is not algebraic over \(L\) and \(p_i=\operatorname{tp}(a_i/K)\in S_{\Delta}(K)\) is minimal, $a_i\ind_K L$. 
Let us consider the inductive case of \(k+1\).
Our inductive hypothesis is that \(a_1, \ldots, a_k \ind_K^\Delta L\) for $1\leq k<n$.
There are two subcases to consider: \(a_{k+1} \in \operatorname{acl}^\Delta(K 
\cup \{a_1, \ldots, a_k\})\) or  \(a_{k+1} \notin \operatorname{acl}^\Delta(K 
\cup \{a_1, \ldots, a_k\})\).
In the first case, since independence is insensitive to taking algebraic
closures, we have that  \(a_1, \ldots, a_k \ind_K^\Delta L\) implies
that \(\operatorname{acl}^\Delta(K 
\cup \{a_1, \ldots, a_k\}) \ind_K^\Delta L\). Since in this 
case \(\{a_1, \ldots, a_k, a_{k+1}\} \subseteq \operatorname{acl}^\Delta(K 
\cup \{a_1, \ldots, a_k\})\), we conclude by monotonicity that  
\(a_1, \ldots, a_{k+1} \ind_K^\Delta L\).  In the nonalgebraic case, since
\(p_{k+1}\) is minimal, we have \(a_{k+1} \ind_K^\Delta a_1, \ldots, a_k\).  
By the hypothesis of the lemma,   \(a_{k+1} \ind_K^\Delta L\).
By our inductive hypothesis we have \(a_1, \ldots, a_k \ind_K^\Delta L\).
Thus, taking \(B_0 := \{ a_1, \ldots, a_k\}\) and \(B_1 := L\) in 
Lemma~\ref{lemma-trivialregular}, we conclude that 
\(a_{k+1} \ind_K \{ a_1, \ldots, a_k \} \cup L\).   By transitivity, 
this implies that \(a_{k+1} \ind_{K \cup \{a_1, \ldots, a_k \}}^\Delta L\).
Using our inductive hypothesis that \(a_1, \ldots, a_k \ind_K^\Delta L\) and 
transitivity again, we conclude \(a_1, \ldots, a_{k+1} \ind_K^\Delta L\), 
as required.  So by induction, we conclude that \(a_1, \ldots, a_{n} \ind_K^\Delta L\).

Assume in addition that $\{a_1,\ldots,a_n\}$ is $K$-independent. 
Fix $i$, and let \(b:=(a_j)_{j\neq i}.\)
Then \(a_i,b\ind_K^\Delta L.\)
By symmetry and transitivity,
\(
a_i\ind_{K\langle b\rangle_\Delta}^\Delta L.
\)
By the $K$-independence of the set $\{a_1,\ldots,a_n\}$,
$a_i\ind_K^{\Delta}K\langle b\rangle_\Delta$.
By transitivity, \(a_i\ind_K^\Delta L\langle b\rangle_\Delta.\)
By base monotonicity, \(a_i\ind_L^\Delta L\langle b\rangle_\Delta\).
Thus, for every $i$,
\(
a_i\ind^\Delta_L  L\langle a_j:j\neq i\rangle_\Delta.
\)
Hence $\{a_1,\ldots,a_n\}$ is $L$-independent.
\end{proof}

We need one more lemma.

\begin{lemma}
\label{lem:still-non-algebraic}
Let \(K \subseteq \mathbb{U}\) be 
a small  \(\sigma\)-\(\Delta\)-field,
and let \(p\in S_\Delta(K)\) 
be a non-algebraic type.  Suppose that \(q\) is a
completion of \(p\) relative to \(\DCFA\).  Then 
\(q\) is also non-algebraic.
\end{lemma}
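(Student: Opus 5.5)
Let \(a\) be a realization of \(q\) in \(\mathbb{U}\); we must show that some coordinate of \(a\) lies outside \(K^{\operatorname{alg}}\), where the algebraic closure is taken in \(\mathbb{U}\) and \(\sigma\)-\(\Delta\)-algebraicity over \(K\) is the same as field-algebraicity over the \(\sigma\)-\(\Delta\)-field \(K\). The argument is short. Since \(q\) is a completion of \(p\) relative to \(\DCFA\), we have \(p \subseteq q\) once \(p\) is read as a set of \(\Delta\)-polynomial equations and inequations, which are in particular \(\sigma\)-\(\Delta\)-polynomial conditions. So \(\operatorname{tp}_\Delta(a/K) = p\), because the \(\Delta\)-type of \(a\) over \(K\) consists exactly of the \(\Delta\)-polynomial conditions contained in \(q\), and \(p\) is already complete as a \(\Delta\)-type.

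Now use that \(p\) is non-algebraic as a \(\Delta\)-type. By definition, some coordinate of some realization \(b\) of \(p\) lies outside \(K^{\operatorname{alg}}\). The point to check is that algebraicity is a property of the \(\Delta\)-type, so we can move from \(b\) to \(a\). Suppose every coordinate \(a_j\) of \(a\) were algebraic over \(K\). Then \(a_j\) satisfies a nonzero polynomial \(f_j \in K[t]\), which is an ordinary polynomial and hence a \(\Delta\)-polynomial of order zero. Thus \(\ulcorner f_j(x_j) = 0\urcorner \in p\). Then \(b\) also satisfies \(f_j(b_j) = 0\) for every \(j\), so every coordinate of \(b\) is in \(K^{\operatorname{alg}}\), a contradiction. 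Hence \(a\) has a coordinate transcendental over \(K\), and \(q\) is non-algebraic.

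The only care needed is at the identification step: the algebraic closure relative to \(\DCFA\) is \(\acl^{\sigma\text{-}\Delta}(K)\), the field algebraic closure of \(\langle K\rangle_{\sigma\text{-}\Delta} = K\). When \(K\) is not closed under \(\sigma^{-1}\) one would first replace it by the inversive closure, but the paper assumes inversive \(\sigma\)-\(\Delta\)-fields throughout, so this is exactly \(K^{\operatorname{alg}}\). I expect no real obstacle here.
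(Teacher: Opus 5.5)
Your proof is correct and takes essentially the same approach as the paper. Both argue that a realization of \(q\) realizes \(p\), and that since \(K\) is inversive, \(\operatorname{acl}^{\sigma\text{-}\Delta}(K)=K^{\operatorname{alg}}\), so algebraicity of \(q\) would force algebraicity of \(p\). You also make explicit, using the order-zero polynomials \(f_j\in K[t]\) lying in \(p\), why a single realization in \(K^{\operatorname{alg}}\) forces every realization of \(p\) to be algebraic; the paper leaves this step implicit.
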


\begin{proof}
If \(q\) were algebraic, then it would have a realization \(a\) in 
\(\operatorname{acl}^{\sigma\text{-}\Delta}(K)\). 
Since \(\operatorname{acl}^{\sigma\text{-}\Delta}(K)\) is the 
field theoretic algebraic closure of the inversive \(\sigma\)-\(\Delta\)-field
generated by \(K\) and \(K\) is already an  inversive \(\sigma\)-\(\Delta\)-field, we have \(\operatorname{acl}^{\sigma\text{-}\Delta}(K) = 
\operatorname{acl}^\Delta(K) = K^{\operatorname{alg}}\).  That is, 
the coordinates of \(a\) would lie in \(K^{\operatorname{alg}}\), meaning that 
\(p\) would be algebraic, contrary to our hypotheses.
\end{proof}

\vskip5pt
Let us return to the proof of Theorem~\ref{thm-trivialminimal}.

\begin{proof}
Let $q$ be a completion of $p$ relative to $\DCFA$.
By Lemma \ref{lem:still-non-algebraic}, $q$  is non-algebraic.
We need to show that if \(a\) is a realization of 
\(q\) and \(L\) is an algebraically closed \(\sigma\)-\(\Delta\)-field
extending \(K\) inside \(\mathbb{U}\), then 
either the coordinates of \(a\) come from \(L\) or 
\(a \ind_K^{\sigma\text{-}\Delta} L\). 

Let us suppose towards a contradiction that 
\(a\) is not algebraic over \(L\) while 
\(a \nind_K^{\sigma\text{-}\Delta} L\).
Recall that 
this dependence means that 
\(\langle K \cup a \rangle_{\sigma\text{-}\Delta}\)
and \(L\) are algebraically dependent over \(K\).
Using the facts that \(\langle K \cup a \rangle_{\sigma\text{-}\Delta}\) is 
the field generated by 
\(\bigcup_{j \in \mathbb{Z}} \sigma^j(\langle K \cup \{a \}\rangle_\Delta) \) and an algebraic 
dependence is witnessed by a single polynomial
relation, we can find some finite \(N\) and \(M\) so that 
\(K \langle \sigma^{-M}(a), \ldots, \sigma^{N}(a) \rangle_\Delta\) and \(L\)
are algebraically dependent over \(K\).  Since 
\(\sigma\) restricts to an automorphism of 
both \(K\) and \(L\) and \(\sigma\) respects 
algebraic (in)dependence, we may replace \(N\) with \(N+M\)
so that \(K \langle a, \sigma(a), \ldots, \sigma^N(a)\rangle_\Delta\) is algebraically dependent with \(L\)
over \(K\).

From the definition of \(\ind^\Delta\), this dependence 
means that \(a, \sigma(a), \ldots, \sigma^N(a) \nind_K^\Delta L\)
which contradicts Lemma~\ref{lem:independentsets-and-forking} as 
each type \(\operatorname{tp}(\sigma^i(a)/K)\) is minimal and trivial
and no \(\sigma^i(a)\) is algebraic over \(L\) for 
\(i \in \mathbb{Z}\).  
\end{proof}

In the following, we refine Theorem~\ref{thm-trivialminimal} by assuming the stronger hypothesis that the types we consider are totally disintegrated relative to $\DCF$. We will obtain a 
description of the possible \(\sigma\)-\(\Delta\)-equations consistent 
with a given totally disintegrated type.   Before doing so, we need to introduce some notation.

\begin{definition}
\label{def:twisted-composition}
Let \(f\) be a \(\Delta\)-rational function and \(\tau:K \to K\) a 
\(\Delta\)-field automorphism.  We write \(f^\tau\) for the \(\Delta\)-rational function obtained by applying \(\tau\) to the coefficients of \(f\).  For \(j \in \mathbb{N}\)
we define \(f^{\diamond^\tau j}\) by recursion on \(j\): 
\[f^{\diamond^\tau 0} := f, \qquad  f^{\diamond^\tau j + 1} := f^{\tau^{j+1}} \circ f^{\diamond^\tau j}. \]
\end{definition}

\begin{remark} \label{rk-iteration-of-f}
The notation may seem somewhat unusual, but it will be useful when we study possible relations between a totally disintegrated type
$p=\operatorname{tp}_{\Delta}(a/K)$
and its transforms
$p^{\tau^j}:=\operatorname{tp}_{\Delta}(\tau^j(a)/K)$
where \(\tau\) is an automorphism of \(\mathbb{U}\) as 
a \(\Delta\)-field.
Since \(p^{\tau^j}\) is also totally disintegrated for every $j$, Proposition~\ref{prop:dependence-totally-disintegrate} implies that, if
$p\not\perp p^\tau,$
then there exists a bijective \(\Delta\)-rational function
$f: p(\mathbb U)\longrightarrow p^\tau(\mathbb U).$
For each \(j\geq 0\), its transform \(f^{\tau^j}\) is a bijective $\Delta$-rational map
$f^{\tau^j}:\,p^{\tau^j}(\mathbb U)\longrightarrow p^{\tau^{j+1}}(\mathbb U).$
Thus, defining $f^{\diamond^\tau (j+1)}
:=f^{\tau^{j+1}}\circ f^{\diamond^\tau j},$
we obtain a bijective \(\Delta\)-rational function
$f^{\diamond^\tau j}:
p(\mathbb U)\longrightarrow p^{\tau^{j+1}}(\mathbb U).$
\end{remark}

With Definition~\ref{def:twisted-composition} in place, we may describe the possible \(\sigma\)-\(\Delta\)-equations consistent with a totally disintegrated type $p$.  
We begin with the case where $p$ is orthogonal to all its
nontrivial transforms under powers of $\sigma$.
Recall that an element $a\in\mathbb U$ is
\emph{$\sigma$-transcendental over $K$} if, for every
$\ell\geq 0$, the elements
$a,\sigma(a),\ldots,\sigma^\ell(a)$ are algebraically
independent over $K$.

\begin{prop}
\label{prop:automatic-sigma-hypertranscendence}
Let $K$ be an algebraically closed
$\sigma$-$\Delta$-field, and let
$p\in S_{x,\Delta}(K)$ be a minimal type relative
to $\DCF$. Suppose that
\(p\perp p^{\sigma^i}\) for every  \(i>0.\)
Then, for every realization $a\models p$, the family
$(\sigma^i(a))_{i\in\mathbb Z}$ is independent over
$K$ relative to $\ind^\Delta$. In particular, if $x$ is a
single variable, every realization of $p$ is
$\sigma$-transcendental over $K$.

Moreover, $p$ has a unique complete quantifier-free
$\sigma$-$\Delta$-extension over $K$, and this extension
is minimal relative to $\DCFA$.
\end{prop}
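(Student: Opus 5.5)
The plan is to use the orthogonality hypothesis in place of the triviality that drove Theorem~\ref{thm-trivialminimal}. First, fix a realization $a\models p$ and set $a_i:=\sigma^i(a)$. Since $K$ is a $\sigma$-$\Delta$-field, $a_i$ realizes $p^{\sigma^i}$ over $K$. By the argument of Lemma~\ref{lem:aut-preserves-minimality-and-triviality}, each $p^{\sigma^i}$ is minimal.

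For $i<j$ the types $p^{\sigma^i}$ and $p^{\sigma^j}$ are orthogonal. Indeed, applying $\sigma^{-i}$ (which preserves $K$ and $\ind^\Delta$) turns this into $p\perp p^{\sigma^{j-i}}$, which is the hypothesis.

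The first key step, and the one I expect to be the main obstacle, is to upgrade pairwise orthogonality to orthogonality with the type of an independent tuple. Concretely, I want the following claim: if $q$ is a minimal type over $K$ and $b_1,\dots,b_k$ is a $K$-independent sequence with each $\operatorname{tp}(b_j/K)\perp q$, then for any $b\models q$ and any $\Delta$-field $L\supseteq K$ with $b\ind_K^\Delta L$ and $b_1,\dots,b_k\ind^\Delta_K L$, we have $b\ind^\Delta_L b_1,\dots,b_k$.

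I would prove the claim by induction on $k$. Write $B_{j}:=L\cup\{b_1,\dots,b_{j}\}$ and assume $b\ind^\Delta_L B_j$. Then $\operatorname{tp}(b/\acl^\Delta(B_j))$ is a nonforking extension of $q$. Also $b_{j+1}\ind_K^\Delta B_j$, so $\operatorname{tp}(b_{j+1}/\acl^\Delta(B_j))$ is a nonforking extension of $\operatorname{tp}(b_{j+1}/K)$. The definition of $\perp$ then gives $b\ind^\Delta_{B_j}b_{j+1}$. Transitivity (Remark~\ref{rk:properties-of-independence}) yields $b\ind^\Delta_L B_{j+1}$.

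The independence $b_{j+1}\ind_K^\Delta B_j$ used here needs $b_1,\dots,b_k\ind^\Delta_K L$ together with $K$-independence of the $b$'s. These are combined through symmetry and transitivity exactly as at the end of the proof of Lemma~\ref{lem:independentsets-and-forking}.

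Next I would apply the claim with $L=K$ to the window $a_{-M},\dots,a_N$, adding one element at a time. This shows by induction that every finite window is $K$-independent, hence the whole family $(a_i)_{i\in\mathbb Z}$ is independent over $K$. When $x$ is a single variable, each $a_i$ is nonalgebraic over $K$ by minimality of $p^{\sigma^i}$. Independence of the family then says $a_\ell$ is transcendental over $K(a_0,\dots,a_{\ell-1})^{\alg}$ for each $\ell$, which gives $\sigma$-transcendence.

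For uniqueness of the extension I would argue as follows. Since $K$ is algebraically closed, types over $K$ correspond to prime ideals, and $\Delta$-types over $K$ are stationary. So the $\Delta$-type of each window $(a_{-M},\dots,a_N)$ over $K$ is forced to be the independent product of the types $p^{\sigma^i}$. Hence the prime $\sigma$-$\Delta$-ideal of $a$ over $K$, and therefore the quantifier-free $\sigma$-$\Delta$-type $\operatorname{tp}_{\sigma\text{-}\Delta}(a/K)$, does not depend on the choice of $a\models p$.

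For minimality relative to $\DCFA$ I would follow the proof of Theorem~\ref{thm-trivialminimal}. Take an algebraically closed $\sigma$-$\Delta$-field $L\supseteq K$ with $a$ not algebraic over $L$; by Lemma~\ref{lem:still-non-algebraic} the extension is non-algebraic to begin with. Since $\sigma(L)=L$, no $a_i$ is algebraic over $L$, so minimality gives $a_i\ind_K^\Delta L$ for every $i$. It then suffices to show that each window satisfies $a_{-M},\dots,a_N\ind_K^\Delta L$.

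I would prove this by induction on the length of the window. Suppose $W\ind_K^\Delta L$ for a window $W$, and let $b$ be the next element. The claim, applied with this $L$, gives $b\ind_L^\Delta W$. Combining this with $b\ind_K^\Delta L$ by transitivity gives $b\ind^\Delta_K L\cup W$. Together with $W\ind_K^\Delta L$, symmetry and transitivity then give $W\cup\{b\}\ind_K^\Delta L$.

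Finally, as in the proof of Theorem~\ref{thm-trivialminimal}, independence of all windows from $L$ over $K$ is exactly $a\ind^{\sigma\text{-}\Delta}_K L$. This proves that the unique extension is minimal relative to $\DCFA$.
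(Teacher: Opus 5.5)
Your proof is correct, but it is organized differently from the paper's.

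\textbf{Independence of the family.} The paper argues by a minimal counterexample. Take the least $\ell$ with $a,\ldots,\sigma^\ell(a)$ dependent over $K$. Since $\sigma(K)=K$, shifting gives both $a\ind^\Delta_K \sigma(a),\ldots,\sigma^{\ell-1}(a)$ and $\sigma^\ell(a)\ind^\Delta_K \sigma(a),\ldots,\sigma^{\ell-1}(a)$. Hence $a$ and $\sigma^\ell(a)$ are dependent over $K\langle\sigma(a),\ldots,\sigma^{\ell-1}(a)\rangle_\Delta$. This directly witnesses $p\not\perp p^{\sigma^\ell}$, so each failure of independence reduces to one instance of the hypothesis.

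\textbf{Minimality.} The paper notes that $\operatorname{tp}_\Delta(\sigma^i(a)/L)$ are nonforking extensions of the $p^{\sigma^i}$, so the orthogonality hypotheses pass to $L$. It then reruns the first part over $L$ to get independence of $(\sigma^i(a))_i$ over $L$. Finally it combines this with $\sigma^i(a)\ind^\Delta_K L$ by transitivity.

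\textbf{Your route.} You first upgrade pairwise orthogonality to orthogonality with an independent tuple; this is your claim, with the auxiliary base $L$ built in. You then use that one lemma twice, once with $L=K$ and once with the given $L$, growing windows one element at a time.

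\textbf{Comparison.} Your argument never uses the shift structure, so it applies verbatim to any family of pairwise orthogonal minimal types. It also gets the base change to $L$ straight from the definition of $\perp$, applied over $\operatorname{acl}^\Delta(L\cup\{b_1,\ldots,b_j\})$, rather than by rerunning an argument. The paper's proof is shorter because the $\sigma$-symmetry reduces everything to a single pair $(a,\sigma^\ell(a))$.

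Two routine steps are left implicit in your write-up, and both hold. First, $b\ind^\Delta_K B_j$, which you need for nonforking, follows from $b\ind^\Delta_K L$ and $b\ind^\Delta_L B_j$ by transitivity. Second, a $K$-independent set stays independent when you adjoin an element independent from it.
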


\begin{proof}
Suppose towards a contradiction that there exists $a\models p$ such that the family
$(\sigma^i(a))_{i\in\mathbb Z}$ is not independent over
$K$ relative to $\ind^\Delta$.   Choose such a realization $a$ so that the least positive integer
$\ell$ for which $a,\sigma(a),\ldots,\sigma^\ell(a)$ are $K$-dependent relative to $\ind^\Delta$ is as small as possible.
Then we would have \(a \ind_K^\Delta \sigma(a),
\ldots, \sigma^{\ell-1} a\) and \(\sigma^\ell(a) \ind_K^\Delta \sigma(a),
\ldots, \sigma^{\ell-1} a \) as $\sigma(K)=K$,
whereas
\(
\sigma^\ell(a) \nind_K^\Delta a, \sigma(a),\ldots,\sigma^{\ell-1}(a).
\)
By transitivity and symmetry, this gives \(a \nind_{K \langle \sigma(a), \ldots, \sigma^{\ell-1}(a) \rangle_\Delta}^\Delta \sigma^{\ell}(a)\).  Hence, \(p \not \perp p^{\sigma^\ell}\),
contrary to our hypothesis. Thus, we have proved that for every realization $a\models p$, the family
$(\sigma^i(a))_{i\in\mathbb Z}$ is independent over
$K$ relative to $\ind^\Delta$. In particular, if $a$ is a
single element in $\mathbb U$,  
 $a$ is $\sigma$-transcendental over $K$.

We now prove the assertion about quantifier-free type extensions.
Let $a\models p$. Every quantifier-free $\sigma$-$\Delta$-formula in $a$ can be written as a $\Delta$-formula in finitely
many transforms $(\sigma^i(a))_{i\in I}$ for some finite $I\subseteq\mathbb Z$.
By stationarity and the independence of $(\sigma^i(a))_{i\in I}$ proved above, 
$\operatorname{tp}_{\Delta}\left((\sigma^i(a))_{i\in I}/K\right)$ is uniquely determined by the individual types $p^{\sigma^i}$.
These types depend only on $p$ and the  action
of $\sigma$ on $K$. Hence all realizations of $p$
have the same quantifier-free
$\sigma$-$\Delta$-type over $K$, proving uniqueness
of the complete quantifier-free extension, say $q$.

To prove minimality, let $L\supseteq K$ be a small
algebraically closed $\sigma$-$\Delta$-field, and let
$a\models p$ be nonalgebraic over $L$. Set $p_L=\operatorname{tp}_\Delta(a/L)$.
 By minimality and  stationarity of $p^{\sigma^i}$,  
$p_L^{\sigma^i}=\operatorname{tp}_\Delta(\sigma^i(a)/L)$ is the
unique nonforking extension of $p^{\sigma^i}$ to $L$.
In particular,
\(
\sigma^i(a)\ind^\Delta_K L\) for every \(i\in\mathbb Z.
\) 
Orthogonality is preserved under nonforking
extensions, so
\(p_L\perp p_L^{\sigma^i}\)  for every \(i>0.\)
By the independence assertion proved above with $L$ in place of $K$, we get
$(\sigma^i(a))_{i\in\mathbb Z}$ is independent over $L$.
Consequently, for every finite $I\subseteq\mathbb Z$,
the tuple $(\sigma^i(a))_{i\in I}$ is independent over
$L$. Since $\sigma^i(a)\ind_{K}L$ for each $i$, by transitivity,
\(
(\sigma^i(a))_{i\in I}\ind^\Delta_K L.
\)
By finite character and
\cite[Proposition~3.1(v)]{LS-DCFAm}, we  obtain
\(
a\ind^{\sigma\text{-}\Delta}_K L.
\)
This holds for arbitrary small
$\sigma$-$\Delta$-field extensions of $K$, by passing
to their algebraic closures.
This proves the minimality of $q$.
\end{proof}

We now restrict attention  to the case that \(p\) is nonorthogonal to one of its transforms.

\begin{prop}
\label{prop:completions-totally-disintegrated}
Let \(p \in S_\Delta(K)\) be a totally disintegrated type over a small algebraically closed $\sigma$-$\Delta$-field $K$.
Let \(\ell \in \mathbb{Z}_+\) be the least positive integer 
for which \(p \not \perp p^{\sigma^\ell}\), and \(f\) be the  
\(\Delta\)-rational function over $K$ given by Proposition~\ref{prop:dependence-totally-disintegrate}
so that for \(a\) realizing \(p\) we have that \(f(a)\) realizes \(p^{\sigma^\ell}\). 
Then for any $a\in p(\mathbb U)$,
either 
\begin{itemize}
    \item[(1)] $a$ is $\sigma$-transcendental over $K$; or
    \item[(2)] $a$ satisfies the $\sigma$-equation  $\sigma^{\ell n}(x) = f^{\diamond^{\sigma^\ell}n-1}(x)$ for some $n\in\mathbb N_{>0}$.
\end{itemize}
\end{prop}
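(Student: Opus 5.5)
Fix a realization \(a\) of \(p\) and suppose (1) fails, so that for some \(r\) the elements \(a,\sigma(a),\ldots,\sigma^r(a)\) are \(K\)-dependent relative to \(\ind^\Delta\). We may read \(\sigma\)-transcendence in the sense of Proposition~\ref{prop:automatic-sigma-hypertranscendence}, namely \(\ind^\Delta\)-independence of the family \((\sigma^i(a))_{i\in\mathbb Z}\) over \(K\); for a single variable this is equivalent to the stated notion, since each \(\sigma^i(a)\) has minimal type.

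\textbf{Step 1: locating the first dependence.} Since \(p\) is totally disintegrated, Lemma~\ref{lem:aut-preserves-minimality-and-triviality} shows that each \(p^{\sigma^i}\) is minimal and trivial; the same argument transports total disintegration. By Proposition~\ref{prop:indep-trivial}, a family of realizations of the trivial regular types \(p^{\sigma^i}\) that is pairwise independent is independent. Hence there are \(i<j\) with \(\sigma^i(a)\nind^\Delta_K\sigma^j(a)\). Applying \(\sigma^{-i}\), which fixes \(K\) setwise and preserves \(\ind^\Delta\), we get \(a\nind^\Delta_K\sigma^{k}(a)\) with \(k=j-i>0\). In particular \(p\not\perp p^{\sigma^k}\).

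\textbf{Step 2: the dependence occurs at a multiple of \(\ell\).} By Proposition~\ref{prop:dependence-totally-disintegrate} and Remark~\ref{rk-iteration-of-f}, the map \(g_n:=f^{\diamond^{\sigma^\ell}(n-1)}\) is a \(\Delta\)-rational bijection from \(p(\mathbb U)\) onto \(p^{\sigma^{\ell n}}(\mathbb U)\), so \(p\not\perp p^{\sigma^{\ell n}}\) for every \(n\ge 1\). Nonorthogonality among these totally disintegrated types is an equivalence relation. Indeed, by Proposition~\ref{prop:dependence-totally-disintegrate} it amounts to the existence of \(K\)-definable \(\Delta\)-rational bijections between realization sets, and such bijections compose. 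The relation is also \(\sigma\)-invariant, since \(p^{\sigma^i}\not\perp p^{\sigma^j}\) iff \(p\not\perp p^{\sigma^{j-i}}\). Therefore \(H=\{k\in\mathbb Z: p\not\perp p^{\sigma^k}\}\) is a subgroup of \(\mathbb Z\). Its least positive element is \(\ell\), so \(H=\ell\mathbb Z\), and consequently \(k=\ell n\) for some \(n\ge1\).

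\textbf{Step 3: identifying the equation.} Now \(a\) and \(\sigma^{\ell n}(a)\) both realize \(p^{\sigma^{\ell n}}\)-related types, and \(g_n(a)\) also realizes \(p^{\sigma^{\ell n}}\), with \(\acl^\Delta(K\cup\{g_n(a)\})=\acl^\Delta(K\cup\{a\})\). From \(a\nind^\Delta_K\sigma^{\ell n}(a)\) we get \(g_n(a)\nind^\Delta_K\sigma^{\ell n}(a)\), and these are two realizations of the totally disintegrated type \(p^{\sigma^{\ell n}}\). Total disintegration forbids distinct realizations from being dependent, so \(\sigma^{\ell n}(a)=g_n(a)=f^{\diamond^{\sigma^\ell}n-1}(a)\), which is (2).

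\textbf{Main obstacle.} The delicate point is Step 2. One must verify that nonorthogonality is transitive among these types, which requires the fact that the types are disintegrated and therefore that any dependence is already witnessed over \(K\), as in Corollary~\ref{cor:trivial-field-of-definition-of-dependence}. One must also check that the bijection produced by composing maps is the one prescribed by \(f\), which follows from the uniqueness clause of Proposition~\ref{prop:dependence-totally-disintegrate}. If composing maps turns out to be awkward, I would instead argue directly: write \(k=\ell n+r\) with \(0\le r<\ell\), and if \(r>0\), compose \(g_n^{-1}\)-type maps to obtain \(p\not\perp p^{\sigma^r}\), contradicting the minimality of \(\ell\).
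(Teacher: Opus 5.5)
Your proof is correct and follows the paper's argument. You locate a dependence $a \nind_K^\Delta \sigma^k(a)$ via triviality, show $\ell \mid k$ by composing the bijections of Proposition~\ref{prop:dependence-totally-disintegrate} against the minimality of $\ell$ (your subgroup formulation repackages the paper's division-with-remainder step, which you also give as a fallback), and then pin down $\sigma^{\ell n}(a)$ by total disintegration. The one small difference is that you compare $\sigma^{\ell n}(a)$ directly with $f^{\diamond^{\sigma^\ell} n-1}(a)$, whereas the paper first matches $\sigma^{A}(a)$ with an auxiliary bijection $h$ and then identifies $h$ with $f^{\diamond^{\sigma^\ell} n-1}$ using the uniqueness clause; your shortcut makes that clause unnecessary.
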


\begin{proof}
Suppose \(a\) is not \(\sigma\)-transcendental. 
Then \(a\) is \(\sigma\)-algebraic.
Let \(A \in \mathbb{Z}_+\) be minimal so that \(a, \ldots, \sigma^{A} a\) are not \(K\)-independent relative to \(\ind^\Delta\).
By triviality and minimality of 
\(p\), and hence also of \(p^{\sigma^i}\) for every \(i \in \mathbb{Z}\), this dependence yields \(\sigma^i(a) \nind_K^\Delta \sigma^j(a)\) for some \(0 \leq i < j \leq A\).  Applying \(\sigma^{-i}\) we may take \(i = 0\) and 
by minimality of \(A\), we have \(j = A\). That is, $\sigma^A(a)\in\acl(K,a)$ and $p\not \perp p^{\sigma^A}$.   By Proposition~\ref{prop:dependence-totally-disintegrate},
there is some bijective (on \(p\)) \(\Delta\)-rational function \(h\) taking realizations of \(p\) to realizations of \(p^{\sigma^A}\).
Since both $h(a)$ and $\sigma^A(a)$ are realizations of the totally disintegrated type $p^{\sigma^A}$ in $\acl(K,a)$ with $a$ a realization of a minimal type $p$, 
we have \(h(a) = \sigma^A(a)\).  

We claim that \(A\) is a multiple of \(\ell\). Since $A\geq \ell$, we can write \(A = \ell n + r\) with 
\(0 \leq r < \ell\) and \(n \in \mathbb{Z}_{+}\). Suppose towards
a contradiction that \(r \neq 0\).
By Remark \ref{rk-iteration-of-f}, $f^{\diamond^{\sigma^\ell} n-1}$ is a bijective \(\Delta\)-rational function from $p(\mathbb U)$ to $p^{\sigma^{\ell n}}(\mathbb U)$.
Let \(g\) be the \(\Delta\)-rational
function which is inverse to \(f^{\diamond^{\sigma^\ell} n-1}\) on \(p^{\sigma^{\ell n}}\) given by 
Proposition~\ref{prop:dependence-totally-disintegrate}.  
Then \((h \circ g)^{\sigma^{-\ell n}}\)
is a \(\Delta\)-rational function which takes realizations of \(p\) to 
realizations of \(p^{\sigma^{r}}\).  Hence, \(p \not \perp p^{\sigma^r}\).  As 
\(\ell\) was minimal with \(p \not \perp p^{\sigma^\ell}\), we see that we must have
had \(r = 0\). So $A=\ell n$.

From the uniqueness assertion in Proposition~\ref{prop:dependence-totally-disintegrate},  
we see that \(h = f^{\diamond^{\sigma^\ell} n-1}\).  That is, \(a\) satisfies \(\sigma^{\ell n}(x) = f^{\diamond^{\sigma^\ell}n-1}(x)\).
\end{proof}

\vskip5pt

When $\sigma$ fixes the base field $K$ pointwise, the possible $\sigma$-equations consistent with a totally disintegrated type in Proposition \ref{prop:completions-totally-disintegrated} have a simpler form.

\begin{cor} \label{cor-strictdisintegrated-fixed}
Let $K$ be a  $\sigma$-$\Delta$-field with $K=K^{\sigma}:=\{x\in K: \sigma(x)=x\}$.
Let $p\in S_\Delta(K)$ be a totally  disintegrated type. Then for any $a\in p(\mathbb U)$, either $a$ is $\sigma$-transcendental or $a$ satisfies  $\sigma^\ell(a)=a$ for some $\ell \in\mathbb N_{>0}$.
\end{cor}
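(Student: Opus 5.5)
Since $\sigma$ fixes $K$ pointwise, I would not route the argument through Proposition~\ref{prop:completions-totally-disintegrated}. That proposition assumes $K$ algebraically closed, and $\sigma$ need not fix $K^{\operatorname{alg}}$ pointwise. Instead I would rerun the first half of its proof directly over $K$. The key observation is that $p^{\sigma^i} = p$ for every $i \in \mathbb{Z}$. Indeed, $\sigma^i$ is an automorphism of $\mathbb{U}$ as a $\Delta$-field fixing $K$ pointwise, so for $P \in K\{x\}_\Delta$ we have $P(\sigma^i(a)) = \sigma^i(P(a))$. This vanishes exactly when $P(a)$ does, hence $\operatorname{tp}_\Delta(\sigma^i(a)/K) = \operatorname{tp}_\Delta(a/K)$.

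Now fix $a \in p(\mathbb{U})$ that is not $\sigma$-transcendental over $K$. Then for some $A$ the tuple $a, \sigma(a), \ldots, \sigma^A(a)$ is not $K$-independent relative to $\ind^\Delta$; I take $A \in \mathbb{Z}_+$ least with this property. Each $\sigma^i(a)$ realizes the same totally disintegrated, hence trivial and minimal, type $p$. So Proposition~\ref{prop:indep-trivial} together with Proposition~\ref{prop:minimal-to-regular} shows that some pair must be dependent: $\sigma^i(a) \nind^\Delta_K \sigma^j(a)$ for some $0 \le i < j \le A$. Applying $\sigma^{-i}$, which fixes $K$ and preserves $\ind^\Delta_K$, gives $a \nind^\Delta_K \sigma^{j-i}(a)$. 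By minimality of $A$ we get $j - i = A$, although I only need some positive exponent $\ell := j - i$ here.

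Finally, $a$ and $\sigma^\ell(a)$ are two realizations of $p$, by the first paragraph, and they are dependent over $K$. Total disintegration says that any set of distinct realizations of $p$ is independent. Hence they cannot be distinct, so $\sigma^\ell(a) = a$ with $\ell \in \mathbb{N}_{>0}$, which is the second alternative of the statement.

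The step I expect to be the main obstacle is the appeal to Proposition~\ref{prop:indep-trivial}, that is, to the Baldwin--Harrington Lemma~\ref{lemma-trivialregular}. This is stated for stationary types, and over a $K$ that is not algebraically closed the type $p$ need not be stationary. I would handle this by passing to $K^{\operatorname{alg}}$, which is a $\sigma$-$\Delta$-field. First, $a \ind^\Delta_K K^{\operatorname{alg}}$ trivially, so dependence of $a, \ldots, \sigma^A(a)$ over $K$ is the same as dependence over $K^{\operatorname{alg}}$. Second, the types $\operatorname{tp}_\Delta(\sigma^i(a)/K^{\operatorname{alg}})$ are stationary, minimal, and totally disintegrated, since they are nonforking extensions of $p$ (compare the remark after Lemma~\ref{lm-strictminimal}). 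I would run the pairwise-dependence extraction over $K^{\operatorname{alg}}$ and then descend to $K$, where pairwise dependence is unaffected. The final equality step takes place over $K$, where $\sigma$-invariance of $p$ genuinely holds.
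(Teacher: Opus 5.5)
Your argument is correct and shares the paper's core: since $\sigma$ fixes $K$ pointwise, $p^{\sigma^i}=p$, and total disintegration then forces a coincidence. The difference is how you get to that coincidence.

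The paper applies total disintegration to the whole finite set at once. All of $a,\sigma(a),\ldots,\sigma^N(a)$ realize $p$, and every set of distinct realizations of $p$ is independent over $K$. So if the tuple is dependent, its entries cannot all be distinct. Hence $\sigma^i(a)=\sigma^j(a)$ for some $i<j$, and $\sigma^{j-i}(a)=a$.

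Your extraction of a dependent pair via Proposition~\ref{prop:indep-trivial} (that is, Lemma~\ref{lemma-trivialregular}) is therefore redundant. It rederives from triviality what total disintegration already gives for sets of any size, and you then use only the two-element case.

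For the same reason, your stationarity worry and the passage to $K^{\operatorname{alg}}$ are unnecessary. The worry is also moot on its own terms. Under the paper's definition, a minimal type has a unique nonalgebraic extension over every $L\supseteq K$, in particular over $K^{\operatorname{alg}}$. Every extension of $p$ to $K^{\operatorname{alg}}$ is nonalgebraic, so $p$ is automatically stationary. Your workaround is valid, but it addresses a non-issue.

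What your route buys is mainly that it mirrors the proof of Proposition~\ref{prop:completions-totally-disintegrated}. It also shows that triviality plus pairwise independence of distinct realizations suffices. But that combination is equivalent to total disintegration, so the paper's one-line argument is the more economical one.
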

\begin{proof}
Since $K=K^{\sigma}$, $p^{\sigma^\ell}=p$ for each $\ell\in\mathbb N_{>0}$. 
Thus, if for some \(a\) realizing \(p\) we have a dependence 
amongst \(a, \sigma(a), \ldots, \sigma^N(a)\) for some \(N\), then 
we have a dependence amongst realizations of \(p\) itself.  Since
\(p\) is totally disintegrated, this would force 
\(\sigma^i(a) = \sigma^j(a)\) for some \(0 \leq i < j \leq N\).  Setting
\(\ell := j - i\), we would have \(\sigma^\ell(a) = a\).
\end{proof}

\begin{example}
Let $p\in S_{\Delta}(\mathbb Q)$ be the generic type of $x'=x^3-x^2.$ By \cite[p.532, Corollary]{Rosenlicht} for \(n=2\) and \cite[p.160]{Kolchin-constrained} in general, if $x_1,\ldots,x_n$ are distinct non-algebraic solutions of $x'=x^3-x^2$, then $x_1,\ldots,x_n$ are algebraically independent over $\mathbb Q.$ 
Thus, $p$ is totally disintegrated.
For each $a\in p(\mathbb U)$, $\sigma^i(a)\in p(\mathbb U)$. Thus, either $a$ is $\sigma$-transcendental over $\mathbb Q$, or $a$ satisfies $\sigma^n(a)=a$ for some $n\in\mathbb N_{>0}$.
\end{example}

We say that a type $p\in S_{\Delta}(K)$ is
$\aleph_0$-categorical over $K$ if, for every $r\geq1$,
there are only finitely many relatively $K$-definable
subsets of $p(\mathbb U)^r$.
Here a relatively $K$-definable subset is the intersection
of $p(\mathbb U)^r$ with a $K$-definable set.

We can also prove an analogue of Corollary~\ref{cor-strictdisintegrated-fixed} for $\aleph_0$-categorical minimal types relative to $\DCF$.

\begin{theorem}
\label{thm:omega-cat-dichotomy}
Let $K$ be a $\sigma$-$\Delta$-field such that
$K=K^{\sigma}$.
Let $p\in S_{x,\Delta}(K)$ be a minimal type in a single variable $x$ that is
$\aleph_0$-categorical over $K$.
For every $\alpha\in p(\mathbb U)$, exactly one of
the following holds:
\begin{enumerate}
    \item $\alpha$ is $\sigma$-transcendental over $K$;
    \item $\sigma^N(\alpha)=\alpha$ for some integer $N\geq1$.
\end{enumerate}
\end{theorem}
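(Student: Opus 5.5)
The plan is to reduce everything to a finite set of realizations of \(p\) which \(\sigma\) maps injectively into itself. First, the two alternatives are mutually exclusive: if \(\sigma^N(\alpha)=\alpha\) with \(N\geq 1\), the polynomial relation \(\sigma^N(x)-x=0\) shows \(\alpha,\sigma(\alpha),\ldots,\sigma^N(\alpha)\) are algebraically dependent over \(K\). Second, since \(K=K^\sigma\), the automorphism \(\sigma\) fixes every coefficient of every \(\Delta\)-polynomial over \(K\). Hence \(p^{\sigma^i}=p\), and \(\sigma^i(\alpha)\models p\) for all \(i\in\mathbb{Z}\).

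So suppose \(\alpha\) is not \(\sigma\)-transcendental over \(K\). Since \(x\) is a single variable, there is a least \(n\geq 1\) such that \(\alpha,\sigma(\alpha),\ldots,\sigma^{n}(\alpha)\) are algebraically dependent over \(K\), i.e.\ \(\ind^\Delta\)-dependent over \(K\) as realizations of \(p\). By minimality of \(n\), the elements \(\alpha,\ldots,\sigma^{n-1}(\alpha)\) are \(\ind^\Delta\)-independent over \(K\). Minimality of \(p\) then gives \(\sigma^n(\alpha)\in F:=\acl^\Delta(K\cup\{\alpha,\ldots,\sigma^{n-1}(\alpha)\})\). Next I would check that \(\sigma(F)\subseteq F\). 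Indeed \(\sigma(F)=\acl^\Delta(K\cup\{\sigma(\alpha),\ldots,\sigma^n(\alpha)\})\), using that \(\sigma\) is a \(\Delta\)-automorphism fixing \(K\) setwise. Each generator \(\sigma(\alpha),\ldots,\sigma^n(\alpha)\) lies in \(F\), and \(F\) is an algebraically closed \(\Delta\)-field containing \(K\), so \(\sigma(F)\subseteq F\).

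The key step, and the only place \(\aleph_0\)-categoricity enters, is to show that \(E:=F\cap p(\mathbb{U})\) is finite. Write \(\bar\alpha=(\alpha,\ldots,\sigma^{n-1}(\alpha))\). For \(\beta\in E\), the tuple \((\bar\alpha,\beta)\) lies in \(p(\mathbb{U})^{n+1}\). By \(\aleph_0\)-categoricity there are only finitely many relatively \(K\)-definable subsets of \(p(\mathbb{U})^{n+1}\). I would deduce from this that there are only finitely many possibilities for \(\operatorname{tp}_\Delta(\bar\alpha,\beta/K)\) restricted to \(p(\mathbb{U})^{n+1}\). Consequently \(\beta\) realizes one of finitely many types over \(K\cup\{\bar\alpha\}\), and each of these is algebraic since \(\beta\in\acl^\Delta(K\bar\alpha)\). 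Each such type therefore has only finitely many realizations, and so \(E\) is finite. I expect this to be the main obstacle. One must make sure that the finiteness of relatively definable subsets gives finitely many complete types over \(K\) of tuples in \(p(\mathbb{U})^{n+1}\). One must also check that, over \(\bar\alpha\), each algebraic type is isolated within \(p(\mathbb{U})\) by a finite relatively definable set (for instance the fiber over \(\bar\alpha\) of the relatively definable set it determines). This is the same use of \(\aleph_0\)-categoricity as in the proof of Lemma~\ref{lm-strictminimal}.

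Given these facts the conclusion is immediate. The map \(\sigma\) is injective, it sends \(p(\mathbb{U})\) to \(p(\mathbb{U})\) (since \(p^\sigma=p\)), and it sends \(F\) into \(F\). Hence \(\sigma\) restricts to an injective self-map of the finite set \(E\), and therefore to a permutation of \(E\). Since \(\alpha\in E\), the orbit of \(\alpha\) under this permutation is finite, so \(\sigma^N(\alpha)=\alpha\) for some \(N\geq 1\); concretely, \(N=|E|!\) works. Note that the argument uses neither triviality nor a reduction to pairwise dependence: working with the whole algebraic closure \(F\) absorbs the multi-term dependence directly.
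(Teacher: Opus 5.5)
Your proposal is correct and is essentially the paper's argument: $\aleph_0$-categoricity makes $E=\acl^\Delta(K\cup\{\alpha,\ldots,\sigma^{n-1}(\alpha)\})\cap p(\mathbb U)$ finite, and $\sigma$ permutes $E$, which contains $\alpha$. The differences are minor. You get the permutation from $\sigma(E)\subseteq E$ plus injectivity, instead of the paper's equality $\acl^\Delta(K,b_0,\ldots,b_{\ell-1})=\acl^\Delta(K,b_1,\ldots,b_\ell)$. Also, your ``i.e.\ $\ind^\Delta$-dependent'' step is unnecessary, and it is not automatic when $K\langle\alpha\rangle_\Delta$ has transcendence degree $>1$ over $K$, because $\sigma^n(\alpha)\in F$ already follows from the minimality of $n$ for elementwise algebraic dependence.
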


\begin{proof}
Let $D:=p(\mathbb U)$.
Since $\sigma$ fixes $K$ pointwise and commutes with
the derivations, $\sigma(D)=D$.
For every finite $B\subseteq D$, the set
\(
\operatorname{acl}^{\Delta}(K\cup B)\cap D
\)
is finite. Indeed, $\aleph_0$-categoricity implies that
there are only finitely many relatively
$(K\cup B)$-definable subsets of $D$.
Every element of \(\operatorname{acl}^{\Delta}(K\cup B)\cap D\) belongs to a finite
such subset, so their union is finite.

Suppose that $\alpha$ is not $\sigma$-transcendental
over $K$. Then there exists $r\geq 1$ such that
\(\alpha,\sigma(\alpha),\ldots,\sigma^r(\alpha)
\)
are algebraically dependent over $K$, and hence dependent
over $K$ relative to $\ind^\Delta$.
Write $b_i:=\sigma^i(\alpha)$, and choose the least
$\ell\geq1$ such that
\(b_0,b_1,\ldots,b_\ell
\)
is dependent over $K$ relative to $\ind^\Delta$.
Such an $\ell$ exists and is positive because $p$
is nonalgebraic.

By the choice of $\ell$, the tuple
$(b_0,\ldots,b_{\ell-1})$ is independent over $K$.
Applying $\sigma$, we see that
$(b_1,\ldots,b_\ell)$ is also independent over $K$.
Since every $b_i$ realizes the minimal type $p$,
the dependence of the full tuple implies
\[
\begin{aligned}
b_\ell
&\in
\operatorname{acl}^{\Delta}(K,b_0,\ldots,b_{\ell-1}),\\
b_0
&\in
\operatorname{acl}^{\Delta}(K,b_1,\ldots,b_\ell).
\end{aligned}
\]
Consequently,
\[
\operatorname{acl}^{\Delta}(K,b_0,\ldots,b_{\ell-1})
=
\operatorname{acl}^{\Delta}(K,b_1,\ldots,b_\ell).
\]

Let
\(
F:=
\operatorname{acl}^{\Delta}(K,b_0,\ldots,b_{\ell-1})
\cap D.
\)
The set $F$ is finite and contains $\alpha$.
Moreover,
\[
\sigma(F)
=
\operatorname{acl}^{\Delta}(K,b_1,\ldots,b_\ell)
\cap D=
\operatorname{acl}^{\Delta}(K,b_0,\ldots,b_{\ell-1})
\cap D=F.
\]
Thus $\sigma$ permutes the finite set $F$, and hence
$\sigma^N(\alpha)=\alpha$ for some $N\geq1$.
\end{proof}

We conclude this section by giving an example which is trivial but not $\aleph_0$-categorical. 

\begin{example}
Consider the differential equation satisfied by the
classical $j$-function:
\[
\chi(y):=
\left(\frac{y''}{y'}\right)'
-\frac12\left(\frac{y''}{y'}\right)^2
+\frac{y^2-1968y+2654208}
       {2y^2(y-1728)^2}(y')^2
=0.
\]
By \cite{FS2018}, its solution set $X$ is strongly
minimal and geometrically trivial.

Fix $\lambda\in\mathbb C$, and let $\sigma$ act by
translation on the differential field generated
over $\mathbb C$ by the functions
$j(t+k\lambda)$, $k\in\mathbb Z$, interpreted as
meromorphic germs. Thus $\sigma$ fixes $\mathbb C$
pointwise, commutes with $\delta=d/dt$, and satisfies
\(\sigma^r(j(t))=j(t+r\lambda),\,\, 
 r\in\mathbb Z.
\)

\begin{enumerate}
\item
If $\lambda\notin\mathbb Q$, then $j(t)$ is
$\sigma$-transcendental over $\mathbb Q$.
 
Hence by Theorem \ref{thm-trivialminimal},  $j(t)$ is $\sigma$-transcendental over any $\sigma$-$\delta$-field extension of $\mathbb Q$ over which it is transcendental.
\item
If $\lambda=p/q\in\mathbb Q$, where
$p\in\mathbb Z$, $q\geq1$, and $\gcd(p,q)=1$,
then the identity $j(t+1)=j(t)$ gives
\(
\sigma^q(j(t))=j(t+p)=j(t).
\)
Moreover, each pair $j(t)$ and $\sigma^r(j(t))$
satisfies a modular relation
\(
\Phi_N\bigl(j(t),\sigma^r(j(t))\bigr)=0
\)
for an appropriate classical modular polynomial
$\Phi_N\in\mathbb Z[Y,Z]$.
\end{enumerate}
\end{example}

\section{Concrete examples of differential equations}
\label{sec:examples-of-differential-equations}

In this section we illustrate 
Theorem~\ref{thm-trivialminimal} with some 
concrete examples of differential equations, analyzed
using the Hrushovski-Itai~\cite{HrIt} formalism of 
differential equations presented by an algebraic curve 
with a differential form.  

For this section, we work with a single derivation,
$\Delta=\{\delta\}$.  Since there is only one 
derivation, we sometimes write  \(y'\) for 
\(\delta y\).  As usual, let $\mathbb U$ be a
universal domain for a fixed completion of $\operatorname{DCF}_{0,1}\operatorname{A}$.
Its reduct to the language of differential rings is a saturated
$\delta$-closed field of characteristic zero.  
We denote its field of constants by
\[
\mathsf C:=\mathbb U^\delta
=\{a\in\mathbb U:\delta(a)=0\}.
\]
For a $\sigma$-$\delta$-subfield $K\subseteq\mathbb U$,
we write $C_K:=K^\delta=K\cap\mathsf C$ for its
field of constants.

\subsection{Results of Hrushovski and Itai}
\label{sec:results-HrIt}

In~\cite{HrIt},
Hrushovski and Itai study
 strictly minimal sets associated with rational $1$-forms 
 on algebraic curves and establish criteria for strict minimality.

\begin{definition}
\label{def:sets-differential-forms}
Let \(K \subseteq \mathsf{C}\) be a small algebraically closed field of constants. 
Let \(X\) be a smooth projective algebraic curve over \(K\) and 
let \(\omega\) be a nonzero rational 1-form on \(X\).  We set 
\(\Xi(X,\omega) := \{ a \in X(\mathbb{U}) :  \omega(a)(\delta a) = 1 \} \)
where, if \(a\) is a pole of \(\omega\), the equation 
\(\omega(a)(\delta a) = 1\) is taken to mean \(\delta a = 0\).
\end{definition}

As argued in \cite[p. 4274]{HrIt},
$\Xi(X, \omega)$ is Kolchin closed and irreducible
and the points of $\Xi(X, \omega)$ algebraic 
over $K$ are precisely the poles of $\omega$.
 
 Let $\Xi^*(\mathbb{P}^1, \omega)$ denote the strongly minimal set obtained by removing  the finitely many algebraic points of $\Xi(\mathbb{P}^1, \omega)$, which are precisely the poles of $\omega$.

\begin{lemma} \cite[Lemma 2.23]{HrIt}  \label{hrushovski2003-lm2.23}  Let $\omega$ be a rational $1$-form on $\mathbb{P}^1$, defined over a constant field $K$. Assume $\omega$ has at least two nonzero residues, and that no two distinct nonzero residues of $\omega$ are rational multiples of each other. Then $X = \Xi^*(\mathbb{P}^1, \omega)$ is strictly minimal over $K$. 
\end{lemma}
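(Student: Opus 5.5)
We would follow the strategy of Hrushovski and Itai: reduce every $K$-definable relation on $X=\Xi^*(\mathbb{P}^1,\omega)$ to an algebraic correspondence on $\mathbb{P}^1$ that preserves $\omega$, and then use the residue hypothesis to show that such correspondences are trivial.

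\emph{Step 1: strong minimality.} Write $\omega=g(y)\,dy$ with $g\in K(y)$. Off the poles, $X$ is defined by the order-one equation $\delta y=1/g(y)$. For a realization $a$ of the generic type of $X$ over $K$, the field $K\langle a\rangle_\delta=K(a)$ has transcendence degree one. By the order-one example and Remark~\ref{remark:alg-minimality}, $X$ is therefore strongly minimal. By the remark after Definition~\ref{def:sets-differential-forms}, $X$ contains no points algebraic over $K$.

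\emph{Step 2: correspondences come from $\omega$-invariant curves.} Suppose $a,b\in X$ with $b\in\acl^\Delta(K,a)$. Let $Z\subseteq\mathbb{P}^1\times\mathbb{P}^1$ be the $K$-irreducible curve of which $(a,b)$ is a generic point, and let $\pi_1,\pi_2$ be the two projections. On the function field $K(Z)=K(a,b)^{\mathrm{alg}}\cap\cdots$ the derivation $\delta$ is a $K$-linear derivation $D$ satisfying $\omega(a)(Da)=1=\omega(b)(Db)$. Since $\Omega^1_{K(Z)/K}$ is one-dimensional, this forces
\[
\pi_1^*\omega=\pi_2^*\omega
\]
on $Z$. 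Conversely, any such $Z$ of bidegree $(d_1,d_2)$ yields a $K$-definable finite-to-finite correspondence on $X$. Comparing residues at a point $z\in Z$ gives
\[
e_1(z)\,\operatorname{res}_{\pi_1 z}\omega \;=\; e_2(z)\,\operatorname{res}_{\pi_2 z}\omega ,
\]
where $e_i(z)$ is the ramification index of $\pi_i$ at $z$. By hypothesis, distinct nonzero residues are not rational multiples of one another. Hence $\pi_2$ sends poles of nonzero residue $r$ over poles of the same residue $r$, with $e_1=e_2$ there.

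\emph{Step 3: the main obstacle, forcing $Z$ to be the diagonal.} We would first fix a residue value $r\neq 0$ and compare the preimages under $\pi_1$ and $\pi_2$ of the set $P_r$ of poles with residue $r$. Counting these preimages with multiplicities should give $d_1=d_2$ and should show that $Z$ is unramified over the poles of nonzero residue. Next, because $\sum\operatorname{res}\omega=0$ and there are at least two nonzero residues, the nonzero residues take at least two distinct values and there are at least three poles with nonzero residue. We would then use Riemann--Hurwitz applied to both projections of $Z$ (or pass to the normalization and compose $Z$ with its transpose) to conclude that $Z$ is the graph of an automorphism $\phi$ of $\mathbb{P}^1$ with $\phi^*\omega=\omega$, and that $\phi$ preserves each $P_r$. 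The remaining difficulty is showing $\phi=\mathrm{id}$. We would argue that such $\phi$ form a finite group that fixes the divisor of $\omega$ residue by residue. If this does not immediately force $\phi$ to fix three points, we would instead exploit the quotient $\mathbb{P}^1\to\mathbb{P}^1/\langle\phi\rangle$: there $\omega$ descends, its residues get multiplied by ramification indices, and these multiplied residues would be rational multiples of the original ones, contradicting the hypothesis. This is the step where we expect the real work to lie.

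\emph{Step 4: conclusion.} Step 3 gives $\acl^\Delta(K,a)\cap X=\{a\}$, so $X$ has no nontrivial algebraic relations between two points. Combining this with triviality (Proposition~\ref{prop:indep-trivial}), each $K$-definable subset of $X^n$ is a Boolean combination of diagonals and of sets defined by the $K$-type of independent tuples, of which there is only one. This yields finitely many $K$-definable subsets of each $X^n$, that is, $\aleph_0$-categoricity. Finally, a $K$-definable equivalence relation $E$ with infinite classes and infinitely many classes is impossible by strong minimality. One with finite classes would give algebraic dependence between distinct points, excluded by Step 3. Hence $E$ is either equality or the universal relation, so $X$ is strictly minimal.
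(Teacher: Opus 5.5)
The paper quotes this lemma from Hrushovski--Itai without proof, so I judge your argument on its own terms. There is a genuine gap at Step 3, which is the entire content of the lemma. Nothing in Step 2 shows that $Z$ is unramified over the poles; you only get $e_1=e_2$ there. You also give no argument that Riemann--Hurwitz forces $\deg\pi_i=1$.

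More seriously, you read the hypothesis as a condition on residue \emph{values}, allowing several poles in one $P_r$. Under that reading the statement is false, so your quotient argument for $\phi=\mathrm{id}$ cannot work. Take $\omega=\bigl(\frac{1}{y-1}+\frac{1}{y+1}+\frac{\sqrt2}{y-2}+\frac{\sqrt2}{y+2}\bigr)\,dy$. Its residues are $1,1,\sqrt2,\sqrt2$ at $\pm1,\pm2$, and $-2(1+\sqrt2)$ at $\infty$. The three values are pairwise not rational multiples, yet $\phi(y)=-y$ satisfies $\phi^*\omega=\omega$. Hence $y=\pm x$ is a nontrivial $K$-definable equivalence relation on $X$ with classes of size two. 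The hypothesis must therefore be read as the paper applies it in Lemmas~\ref{cor-pgl3} and~\ref{residue-cubic}: residues at \emph{distinct poles} are never rational multiples, so each $P_r$ is a single point.

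With that reading the missing step is short. Let $p_1,\dots,p_k$ ($k\geq 3$, as you correctly derive) be the poles with nonzero residues $r_i$. If $\pi_1(z)=p_i$, then $\operatorname{res}_z\pi_1^*\omega=e_1(z)r_i\neq0$. So $\pi_2(z)$ is a pole with residue $(e_1(z)/e_2(z))\,r_i$, and the hypothesis forces $\pi_2(z)=p_i$ and $e_1(z)=e_2(z)$. Thus $\pi_1^{-1}(p_i)=\pi_2^{-1}(p_i)$ with equal ramification, and $d_1=d_2=:d$.

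Choose the coordinate $y$ so that no $p_i$ is $\infty$. If $Z$ is not the diagonal, then $h:=\pi_1^*y-\pi_2^*y$ is a nonzero function on $\tilde Z$. At each $z$ over $p_i$ we have $\operatorname{ord}_z h\geq e_1(z)$, since both $\pi_j^*y-p_i$ vanish to order $e_1(z)$ there. So $h$ has at least $kd\geq 3d$ zeros. Its poles lie over $\infty$ and have total degree at most $d_1+d_2=2d$, which is a contradiction. Hence $Z=\Delta$ and $\acl^\Delta(K,a)\cap X=\{a\}$. This is the intersection count $Z\cdot\Delta=d_1+d_2$ in disguise, and it makes the unramifiedness claim, Riemann--Hurwitz and the quotient unnecessary.

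Separately, Step 4 invokes Proposition~\ref{prop:indep-trivial}, which presupposes that $X$ is geometrically trivial. You never prove this, and you cannot borrow the route of Lemma~\ref{lm-strictminimal} (triviality from $\aleph_0$-categoricity) without circularity. Supply it as follows.
\begin{enumerate}
\item Theorem~\ref{Rosenlicht} gives orthogonality to the constants. Nonzero residues exclude $\omega=dg$. The form $\omega=c\,dh/h$ is excluded because all its residues lie in $c\mathbb{Z}$ and would be pairwise rational multiples.
\item An order-one strongly minimal set orthogonal to the constants is trivial, for instance by the trichotomy, since Manin kernels have order at least two.
\end{enumerate}
Once you have triviality and $\acl^\Delta(K,a)\cap X=\{a\}$, the type over $K$ of a tuple from $X$ is determined by its equality pattern. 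This gives $\aleph_0$-categoricity and the equivalence-relation clause at once. Your case split for $E$ omits the case of finitely many classes, but the pattern argument covers it.
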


As mentioned in \cite[p.~4283]{HrIt} without proof, the nonorthogonality relation between two strictly minimal sets living on $\mathbb{P}^1$
 is governed by M\"obius transformations. For convenience and later use, we include a simple proof of this fact. 
 \begin{lemma}  \label{lm-PGL2}
Let $X=\Xi^*(\mathbb{P}^1,\omega_1)$ 
and $Y=\Xi^*(\mathbb{P}^1,\omega_2)$ 
be two strictly minimal subsets living on $\mathbb P^1$,
where \(\omega_1\) and \(\omega_2\) are 
rational \(1\)-forms on $\mathbb P^1$, defined over a constant field $K$.
 Then $X$ and $Y$ are  non-orthogonal if and only if one
is the image of the other under an element of the definable group
$\operatorname{PGL}_2(\mathbb U)$.
\end{lemma}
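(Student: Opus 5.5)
The backward implication is immediate. Suppose $g\in\operatorname{PGL}_2(\mathbb U)$ satisfies $g(X)=Y$. Pick $a\in X$ generic over $K$ together with the (finitely many) parameters of $g$. Then $g(a)\in Y$, and $K\langle a\rangle^{\operatorname{alg}}$ contains $g(a)$ modulo those parameters. Working over $L:=\acl^\Delta(K\cup\{\text{coefficients of }g\})$, $a$ and $g(a)$ are interalgebraic, so $a\nind^\Delta_L g(a)$.

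The realizations $a$ and $g(a)$ are non-algebraic over $L$. By minimality, $\operatorname{tp}(a/L)$ and $\operatorname{tp}(g(a)/L)$ are nonforking extensions of the generic types $p_{X,K}$ and $p_{Y,K}$. This witnesses $p_{X,K}\not\perp p_{Y,K}$, which is non-orthogonality of $X$ and $Y$.

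For the forward implication I use the fact that, by Lemma~\ref{lm-strictminimal}, both generic types $p:=p_{X,K}$ and $q:=p_{Y,K}$ are isolated by the formulas defining $X$ and $Y$ and are totally disintegrated. Proposition~\ref{prop:dependence-totally-disintegrate} is stated over an algebraically closed $\sigma$-$\Delta$-field, but its proof only uses $\Delta$-structure, so I apply it over $K$ with the trivial $\sigma$ (or over $\acl$ of $K$). It yields mutually inverse $\Delta$-rational maps $f\colon p(\mathbb U)\to q(\mathbb U)$ and $g$ over $K$.

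The key step is to upgrade $f$ from a $\Delta$-rational map to an honest element of $\operatorname{PGL}_2(K)$. Since $a\in X$ satisfies the order one equation $\omega_1(a)(\delta a)=1$, every $\delta^k a$ is a rational function of $a$ over $K$. Hence $K\langle a\rangle_\Delta=K(a)$ and $f(a)=r(a)$ for some $r\in K(t)$. Symmetrically, $a=s(f(a))$ with $s\in K(t)$, so $s\circ r=\mathrm{id}$ on the generic point and $K(a)=K(r(a))$. By Lüroth, or simply by degree considerations, $r$ has degree one, and thus $r$ is a Möbius transformation $\gamma\in\operatorname{PGL}_2(K)\subseteq\operatorname{PGL}_2(\mathbb U)$.

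Finally I must check that $\gamma(X)=Y$ as sets, not merely on generic points. Since $\gamma$ has constant coefficients, it commutes with $\delta$ in the sense that $\delta(\gamma(a))=d\gamma_a(\delta a)$. Therefore $\gamma$ maps $\Xi(\mathbb P^1,\omega)$ onto $\Xi(\mathbb P^1,(\gamma^{-1})^*\omega)$. Since the generic point $a$ of $X$ maps to the generic point of $Y$, the Kolchin closed irreducible sets $\Xi(\mathbb P^1,(\gamma^{-1})^*\omega_1)$ and $\Xi(\mathbb P^1,\omega_2)$ share a generic point. For order one equations of this shape, this forces $(\gamma^{-1})^*\omega_1=\omega_2$: the equations $\omega(a)(\delta a)=1$ determine $\omega$ from $\delta a$ as a function of $a$.

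With $\omega_2=(\gamma^{-1})^*\omega_1$, the poles correspond under $\gamma$, so $\gamma$ carries $\Xi^*(\mathbb P^1,\omega_1)=X$ onto $Y$. I expect this last identification of the forms, together with the pole bookkeeping, to be the only point needing care. Everything else follows directly from total disintegration and the order one structure of the equations.
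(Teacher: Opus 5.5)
Your proof is correct. Its skeleton is the paper's: strict minimality turns nonorthogonality into a $\Delta$-rational bijection, the order-one equation makes that bijection an ordinary rational function, and invertibility forces a M\"obius transformation.

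The routes to the bijection differ.
\begin{itemize}
\item The paper takes a finite-to-finite Kolchin closed correspondence $Z\subseteq X\times Y$ over the base and uses strict minimality to make $Z$ the graph of a bijection. It then uses that definable functions are piecewise $\Delta$-rational, together with the absence of algebraic points on $X$ and $Y$, to get a single $\Delta$-rational map on all of $X$. This gives the set-theoretic statement directly.
\item You instead feed Lemma~\ref{lm-strictminimal} into Proposition~\ref{prop:dependence-totally-disintegrate}. Its Galois-conjugation argument produces mutually inverse $\Delta$-rational maps with no appeal to piecewise definability, but only on the generic types.
\end{itemize}

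You then recover $\gamma(X)=Y$ correctly. Writing $(\gamma^{-1})^*\omega_1=h\,dy$ and $\omega_2=h_2\,dy$, both $h$ and $h_2$ equal $1/\delta b$ at a generic $b$ of $Y$, so the forms coincide. This also explains why the poles correspond. You additionally supply the converse direction and an explicit degree count $\deg(s\circ r)=\deg s\cdot\deg r$ for the M\"obius step, which the paper leaves to the reader.

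One small point of presentation: $\sigma$ plays no role in Proposition~\ref{prop:dependence-totally-disintegrate}, but algebraic closedness of the base does. The right adjustment is to pass to $K^{\mathrm{alg}}$ and use the remark following Lemma~\ref{lm-strictminimal} to keep total disintegration there, rather than to speak of a trivial $\sigma$.
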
 

\begin{proof} 
If $X$ and $Y$ are  nonorthogonal, then
there is a relatively Kolchin closed subset 
\(Z \subseteq \Xi^*(\mathbb{P}^1,\omega_1) \times \Xi^*(\mathbb{P}^1,\omega_2)\) 
which projects dominantly and finite-to-one in both directions.  
We may take \(Z\) to be irreducible.  By strict minimality, 
we may take \(Z\) to be defined over the same 
algebraically closed set of parameters as \(X\) and \(Y\).
From strict minimality of these sets, \(Z\) must be the graph of a bijection as if 
\(a, b \in \Xi^*(\mathbb{P}^1,\omega_1)\) and \(c \in \Xi^*(\mathbb{P}^1,\omega_2)\)
with \((a,c) \in Z\) and \((b,c) \in Z\), then \(b\) would lie in a 
finite set definable from \(a\), which by strict minimality can consist only of 
\(a\) itself.  Hence, \(Z\) is a single-rooted relation. 
Reasoning with the 
two factors reversed, we see that \(Z\) is the graph of a function.  
In general, a definable function is piecewise differential rational.
By strong  minimality of $X$, one piece is defined on a cofinite definable subset of $X$ and the remaining pieces consist of algebraic points. 
As $X$ is strictly minimal, $X$ has no algebraic points, and thus the definable function is a single differential rational function. Since the derivative of any point in \(\Xi^*(\mathbb{P}^1,\omega_1)\) is 
rational over that point, it must be that \(Z\) is the graph of an invertible rational function, that is, of a rational automorphism of \(\mathbb{P}^1\): an element of \(\operatorname{PGL}_2(\mathbb{U})\). 
\end{proof}

In the statements below we say that the generic type over $K$ of a definable set $V^*$ is \emph{strictly minimal} if $V^*$ is strictly minimal, in the sense of Definition~\ref{def:strictly-minimal-set}, over some subfield of $K$ over which it is defined.  By Lemma~\ref{lm-strictminimal} and the remark following it, the generic type of $V^*$ over $K$ is then totally disintegrated.

\begin{lemma} \label{lemma-strictlyminimal}
Let $K$ be an algebraically closed $\delta$-field of characteristic
zero. Let $f_1, f_2\in C_K[y]$ have the same degree $n\geq 3$, and for $i=1,2$, write
\(
f_i(y)=f_{i,n}y^n+f_{i,n-1}y^{n-1}+\cdots+f_{i,0}
\).
Let $p_1$ and $p_2$ be the generic types over $K$ of 
\[
V_1^*=\{y:\delta y=f_1(y),\ f_1(y)\neq 0\},
\qquad
V_2^*=\{y:\delta y=f_2(y),\ f_2(y)\neq 0\},
\]
respectively, and suppose that they are strictly minimal.
Then $p_1$ and $p_2$ are nonorthogonal if and only if there exist
$a\in C_K^\times$ and $b\in C_K$ with $a^{n-1}=\frac{f_{1,n}}{f_{2,n}}$ such that
\[f_2(ay+b)=a f_1(y).
\]
In particular,
\(
b=\frac{1}{n}\left(
      a\frac{f_{1,n-1}}{f_{1,n}}-\frac{f_{2,n-1}}{f_{2,n}}
   \right).
\)
Moreover, if $u\models p_1$, $v\models p_2$, and $u\nind_K^\delta v$, then such $a,b$ can be chosen with $v=au+b$.
\end{lemma}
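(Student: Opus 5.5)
The plan is to translate the equations into the Hrushovski–Itai formalism and then apply Lemma~\ref{lm-PGL2} together with Proposition~\ref{prop:dependence-totally-disintegrate}. The equation $\delta y=f_i(y)$ with $f_i(y)\neq 0$ is exactly $\Xi^*(\mathbb P^1,\omega_i)$ with $\omega_i=dy/f_i(y)$. Since $n\geq 3$, the point at infinity is not a pole of $\omega_i$, so the poles are precisely the roots of $f_i$. Hence $V_i^*=\Xi^*(\mathbb P^1,\omega_i)$.

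For the ``if'' direction, suppose $f_2(ay+b)=af_1(y)$. Take $u\models p_1$ and set $v:=au+b$. Then
\[
\delta v=a\,\delta u=af_1(u)=f_2(au+b)=f_2(v),
\]
and $f_2(v)=af_1(u)\neq 0$, so $v\in V_2^*$. Because $v$ and $u$ are interdefinable over $K$, $v$ is non-algebraic over $K$, and therefore $v\models p_2$ by strict minimality (Lemma~\ref{lm-strictminimal}: the formula isolates the generic type over the base, and the generic type over $K$ is its nonforking extension). Thus $u\nind^\delta_K v$, and so $p_1\not\perp p_2$.

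For the ``only if'' direction and the ``moreover'' clause, suppose $p_1\not\perp p_2$. Both types are totally disintegrated, so Proposition~\ref{prop:dependence-totally-disintegrate} yields a bijective $\delta$-rational map $g$ over $K$ with $v=g(u)$. If we are given dependent $u,v$ directly, Corollary~\ref{cor:trivial-field-of-definition-of-dependence} together with total disintegration forces $v$ to equal this $g(u)$. Arguing as in Lemma~\ref{lm-PGL2}, using $\delta u=f_1(u)\in K(u)$, we see that $g$ is a rational function of $u$ with coefficients in $K$, and bijectivity on $V_1^*$ makes it a Möbius transformation $\mu(y)=\frac{\alpha y+\beta}{\gamma y+\epsilon}$.

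The main obstacle is showing that $\mu$ is affine with constant coefficients. Since $\mu$ transports the solution set of $\omega_1$ to that of $\omega_2$, we have $\delta(\mu(u))=f_2(\mu(u))$. We also have
\[
\delta(\mu(u))=\mu^{\delta}(u)+\mu'(u)f_1(u),
\]
where $\mu^\delta$ denotes $\mu$ with its coefficients differentiated. Since $u$ is transcendental over $K$, this is an identity of rational functions in $y$:
\[
f_2(\mu(y))=\mu^\delta(y)+\mu'(y)f_1(y).
\]
The plan is to compare pole orders at $y=\infty$ and at $y=-\epsilon/\gamma$. If $\gamma\neq 0$, the left-hand side has a pole of order $n$ at $-\epsilon/\gamma$, while the right-hand side has a pole of order at most $2$ there, which contradicts $n\geq 3$. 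Hence $\gamma=0$, and we normalize $\mu(y)=ay+b$. Comparing degrees then gives
\[
f_2(ay+b)=a f_1(y)+\delta a\,y+\delta b .
\]
The left-hand side has degree $n\geq 3$ with leading coefficient $f_{2,n}a^n$, and all its coefficients other than those involving $a,b$ are constants. The plan is to show first that $a\in C_K$. One approach is to note that the leading-coefficient comparison $f_{2,n}a^n=af_{1,n}$ gives $a^{n-1}=f_{1,n}/f_{2,n}\in C_K$, so $a$ is algebraic over constants and hence constant. Next, the $y^{n-1}$ coefficient gives
\[
f_{2,n}na^{n-1}b+f_{2,n-1}a^{n-1}=af_{1,n-1},
\]
which forces $b$ to be a constant given by the stated formula. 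Finally, once $a$ and $b$ are constant we have $\delta a=\delta b=0$, and the identity reduces to $f_2(ay+b)=af_1(y)$.
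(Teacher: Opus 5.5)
Your proof is correct and follows essentially the same route as the paper: you reduce to a M\"obius transformation, rule out $\gamma\neq 0$ using $n\geq 3$, and then compare the $y^n$ and $y^{n-1}$ coefficients. Your pole-order count at $-\epsilon/\gamma$ ($n$ versus at most $2$) is the local form of the paper's degree count $2n-2>n$ after clearing denominators. The other difference is also minor: you get the M\"obius map over $K$ from Proposition~\ref{prop:dependence-totally-disintegrate} (it has degree one because $K(u)=K(v)$) rather than from Lemma~\ref{lm-PGL2}, so the clause $v=au+b$ is built in from the start instead of being recovered afterwards via strict minimality.
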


\begin{proof}
By Lemma~\ref{lm-PGL2},  $p_1$ and $p_2$ are nonorthogonal if and only if there is a M\"obius transformation
\(
\phi(y)=\frac{\alpha y+\beta}{\gamma y+\eta}\) with $\alpha, \beta, \gamma, \eta\in \mathbb U$ and
\(\alpha\eta-\beta\gamma\neq 0,
\)
which carries a generic solution of $\delta y=f_1(y)$ to a generic
solution of $\delta z=f_2(z)$. Take a solution $y$ of $\delta y=f_1(y)$ transcendental over $E:=K\langle \alpha, \beta, \gamma, \eta\rangle$.
Differentiating $\phi(y)$ and clearing denominators gives the following
identity in $E[y]$:
\[
\begin{aligned}
& (\gamma y+\eta)^{n-2}
  \Bigl[
  \bigl((\delta\alpha)y+\delta\beta+\alpha f_1(y)\bigr)
       (\gamma y+\eta) \\
&\hspace{35mm}
  - (\alpha y+\beta)
    \bigl((\delta\gamma)y+\delta\eta+\gamma f_1(y)\bigr)
  \Bigr] \\
&\hspace{15mm}=
  (\gamma y+\eta)^n
  f_2\!\left(\frac{\alpha y+\beta}{\gamma y+\eta}\right).
\end{aligned}
\]
The expression in brackets is
$(\alpha\eta-\beta\gamma)f_1(y)+R(y)$, where $\deg R\leq 2$.
If $\gamma\neq 0$, the left-hand side therefore has degree $2n-2$
and leading coefficient
$f_{1,n}\gamma^{n-2}(\alpha\eta-\beta\gamma)\neq 0$.
The right-hand side has degree at most $n$.
Since $2n-2>n$, this is impossible. Hence $\gamma=0$.

Set $a=\alpha/\eta$ and $b=\beta/\eta$.
The above identity now becomes
\[
a f_1(y)+(\delta a)y+\delta b=f_2(ay+b).
\]
Comparing coefficients of $y^n$ gives
$a^{n-1}=f_{1,n}/f_{2,n}$. Since $K$ is algebraically closed and
$f_{1,n}/f_{2,n}\in C_K$, we have $a\in C_K^\times$.
Comparing coefficients of $y^{n-1}$ gives
\[
a f_{1,n-1}=a^{n-1}(n f_{2,n} b+f_{2,n-1}),
\]
and therefore
\[
b=\frac{1}{n}\left(
      a\frac{f_{1,n-1}}{f_{1,n}}-\frac{f_{2,n-1}}{f_{2,n}}
   \right)\in C_K.
\]
Thus $\delta a=\delta b=0$ and $f_2(ay+b)=a f_1(y)$.

Now suppose that $u\models p_1$, $v\models p_2$, and
$u\nind_K^\delta v$. Minimality gives
\[
\operatorname{acl}^\delta(K,u)
=\operatorname{acl}^\delta(K,v)
\] by Remark \ref{rk:alg-non-orthogonal}.
For an affine bijection $\phi(y)=ay+b$ as above, $\phi(u)\models p_2$
and
\[
\operatorname{acl}^\delta(K,\phi(u))
=\operatorname{acl}^\delta(K,u)
=\operatorname{acl}^\delta(K,v).
\]
Strict minimality of $p_2$ therefore implies $v=\phi(u)$.
\end{proof}

\begin{theorem} \label{thm-strictmin}
Let $K$ be an algebraically closed $\sigma$-$\delta$-field of
characteristic zero, and let
\(f(y)=y^n+f_{n-1}y^{n-1}+\cdots+f_0\in C_K[y],
n\geq 3.
\) 
Suppose that the generic type in $S_\delta(K)$ of
\(
V_f^*=\{y:\delta y=f(y),\ f(y)\neq 0\}
\)
is strictly minimal. Let $\xi$ be a solution of
$\delta y=f(y)$ transcendental over $K$.
Then either $\xi$ is $\sigma$-transcendental over $K$, or there exist
$s\geq 1$, $a\in(C_K)^\times$, and $b\in C_K$ such that
\[
\sigma^s(\xi)=a\xi+b,
\]
and
\[
f^{\sigma^s}(y)
=a f\!\left(\frac{y-b}{a}\right).
\]
In the latter case, 
 $a^{n-1}=1$ and $b=\frac{a f_{n-1}-\sigma^s(f_{n-1})}{n}.
$
\end{theorem}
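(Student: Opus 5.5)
We combine Proposition~\ref{prop:completions-totally-disintegrated} (or rather the argument in its proof) with Lemma~\ref{lemma-strictlyminimal}. Let $p$ be the generic type of $V_f^*$ over $K$. By Lemma~\ref{lm-strictminimal} and the remark following it, $p$ is totally disintegrated. Since $\xi$ solves $\delta y = f(y)$ and is transcendental over $K$, we have $f(\xi)\neq 0$, so $\xi$ realizes $p$.

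For each $i\in\mathbb Z$, applying $\sigma^i$ shows that $\sigma^i(\xi)$ realizes $p^{\sigma^i}$. This is the generic type of $V^*_{f^{\sigma^i}}$, which is again strictly minimal, since $\sigma^i$ transports the relevant definability data. Here $f^{\sigma^i}$ is still monic of degree $n$ with coefficients in $C_K$, because $\sigma$ commutes with $\delta$ and so preserves constants.

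Suppose $\xi$ is not $\sigma$-transcendental. As in the first paragraph of the proof of Proposition~\ref{prop:completions-totally-disintegrated}, take the least $A$ such that $\xi,\dots,\sigma^A(\xi)$ are $K$-dependent relative to $\ind^\delta$. By Proposition~\ref{prop:indep-trivial}, applied to the trivial minimal types $p^{\sigma^i}$ (regular by Proposition~\ref{prop:minimal-to-regular}), some pair is dependent. Shifting by $\sigma^{-i}$ and using minimality of $A$, we get $s:=A\ge1$ with $\xi\nind^\delta_K\sigma^s(\xi)$.

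Now apply the ``moreover'' clause of Lemma~\ref{lemma-strictlyminimal} with $f_1=f$, $f_2=f^{\sigma^s}$, $u=\xi$, $v=\sigma^s(\xi)$. This gives $a\in C_K^\times$ and $b\in C_K$ with
\[\sigma^s(\xi)=a\xi+b, \qquad f^{\sigma^s}(ay+b)=af(y).\]
Substituting $y\mapsto (y-b)/a$ yields $f^{\sigma^s}(y)=af\bigl((y-b)/a\bigr)$, which is the second displayed identity.

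Both leading coefficients are $1$, so the lemma's condition $a^{n-1}=f_{1,n}/f_{2,n}$ becomes $a^{n-1}=1$. Its formula for $b$ specializes, with $f_{1,n-1}=f_{n-1}$ and $f_{2,n-1}=\sigma^s(f_{n-1})$, to
\[b=\frac{af_{n-1}-\sigma^s(f_{n-1})}{n}.\]

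The main point needing care is the hypotheses of Lemma~\ref{lemma-strictlyminimal}. We must check that the generic type of $V^*_{f^{\sigma^s}}$ is strictly minimal, and that the lemma, stated for a $\delta$-field $K$, applies inside $\mathbb U$ viewed as a differentially closed field. For the first, I would argue that $\sigma^s$ restricted to $K$ extends to a $\delta$-automorphism of $\mathbb U$, namely $\sigma^s$ itself. It therefore carries $V_f^*$ and its parameter set to $V^*_{f^{\sigma^s}}$ and preserves strict minimality over the image parameter set. The second is immediate, since the lemma only uses $\DCF$-notions over $K$.
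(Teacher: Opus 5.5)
Your proposal is correct and follows essentially the paper's proof: you obtain a dependent pair $\sigma^i(\xi),\sigma^j(\xi)$ via Proposition~\ref{prop:indep-trivial}, shift by $\sigma^{-i}$, and apply the final assertion of Lemma~\ref{lemma-strictlyminimal} with $f_1=f$, $f_2=f^{\sigma^s}$, reading off $a^{n-1}=1$ and the formula for $b$ from monicity. Choosing the least $A$ (which forces $s=A$) and checking explicitly that $\sigma^s$ transports strict minimality are harmless refinements of steps the paper takes for granted.
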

\begin{proof}
Suppose that $\xi$ is $\sigma$-algebraic over $K$.
Since $\xi$ is transcendental over $K$, there is an integer $k\geq 1$
such that $\xi,\sigma(\xi),\ldots,\sigma^k(\xi)$ are algebraically
dependent over $K$.
For $i\geq 0$, write
\[
\xi_i=\sigma^i(\xi),
\qquad
p_i=\operatorname{tp}_\delta(\xi_i/K).
\]
As $\sigma(K)=K$ and $\sigma$ commutes with $\delta$, each $p_i$
is the strictly minimal generic type of
$\delta y=f^{\sigma^i}(y)$ over $K$.
These types are geometrically trivial. Hence
Proposition~\ref{prop:indep-trivial} yields indices
$0\leq i<j\leq k$ such that
$\xi_i\nind_K^\delta\xi_j$.
This dependence is also ordinary algebraic dependence over $K$, for $\delta\xi_r=f^{\sigma^r}(\xi_r)$ implies 
\(K\langle\xi_r\rangle_\delta=K(\xi_r)\).
Thus there exists a nonzero $G\in K[y,z]$ such that $G(\xi_i,\xi_j)=0$.

Set $s=j-i\geq 1$. Applying $\sigma^{-i}$ gives
\(
G^{\sigma^{-i}}(\xi,\sigma^s(\xi))=0\),
where $G^{\sigma^{-i}}\in K[y,z]\setminus\{0\}$.
Since both $\xi$ and $\sigma^s(\xi)$ are transcendental over $K$,  $p_0$ and $p_s$
are not weakly orthogonal.
Apply the final assertion of Lemma~\ref{lemma-strictlyminimal}
to this particular pair of realizations, with $f_1=f$ and $f_2=f^{\sigma^s}$.
Since both polynomials are monic of degree $n$, it yields
$a\in(K^\delta)^\times$ and $b\in K^\delta$ with the desired form satisfying
\[
a^{n-1}=1,
\qquad
\sigma^s(\xi)=a\xi+b,
\qquad
f^{\sigma^s}(a y+b)=a f(y).
\]      
 \end{proof}

\subsection{Computational examples: $y'=y^3+a_2y^2+a_1y+a_0$}

In this section, we study concrete examples arising from autonomous differential equations of the form $$y'=f(y) .$$   We first recall Rosenlicht's criterion characterizing when the generic type of $y'=f(y)$ is orthogonal to the constants.

\begin{theorem}\cite{Rosenlicht} \label{Rosenlicht}
  Let $K$ be a differential field with constant field $C_K$, 
  and let $C_K(y)$ be a pure transcendental extension field of $C_K$. 
 Let $f(y)\in C_K(y)\backslash\{0\}$ and consider the differential equation $y' = f(y)$.  Suppose that $ \frac{1}{f}$ is neither of the forms  
\begin{equation} \label{eq-partialfraction}
\quad \frac{\partial g(y)}{\partial y}, \quad \text{or} \quad c\cdot\frac{\frac{\partial h(y)}{\partial y}}{h(y)},
\end{equation}
 where $g \in C_K(y)$, $c \in C_K^{\alg}$ and $h \in C_K^{\alg}(y)^\times$. 
Then the generic type of \( y' = f(y) \) is orthogonal to the constants.
\end{theorem}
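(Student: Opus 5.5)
Since Theorem~\ref{Rosenlicht} is quoted from~\cite{Rosenlicht}, I will only sketch how I would reprove it. The plan is to show that any failure of orthogonality to the constants forces a first integral, and that the existence of a first integral forces $1/f$ into one of the two shapes~\eqref{eq-partialfraction}.

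First I reduce to a weak-orthogonality statement. Non-orthogonality of a minimal order-one type to $p_{\mathsf C}$ can always be witnessed over some differential field extension $L\supseteq K$. There we get a generic solution $y$ of $y'=f(y)$ over $L$ and a constant $c\notin L^{\alg}$ with $c\in L(y)^{\alg}$. Since $y$ has order one, the field $L\langle y\rangle=L(y)$ has transcendence degree one over $L$. A standard descent lets me replace $c$ by a constant lying in $L(y)$ itself (take a symmetric function of its conjugates), possibly after enlarging $L$ to be algebraically closed. So the task becomes: if $L(y)$ contains a new constant, then $1/f$ has one of the forms~\eqref{eq-partialfraction}.

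Second, I would study the derivation $D=\partial_L+f(y)\frac{\partial}{\partial y}$ on $L(y)$ and look for a nonzero $u\in L(y)\setminus L$ with $D u=0$. Rather than handle constants directly, I would use the logarithmic-derivative trick. The existence of a new constant implies there is an element $u$ with $Du\in L$, or with $Du/u\in L$. In the first case I write $u=g(y)$, and in the second I write $u=h(y)$. Expanding $D u$ in partial fractions in $y$ over $L^{\alg}$, the condition $Du\in L$ (respectively $Du/u\in L$) should force the $y$-dependence to satisfy $f\,\partial_y g\in L$ (respectively $f\,\partial_y h/h\in L$), up to terms coming from $\partial_L$ acting on the coefficients.

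The main obstacle is showing that the coefficients can be taken to be constants, namely in $C_K$ or $C_K^{\alg}$, rather than elements of $L$. For this I would use that $f$ has constant coefficients: then $\partial_L$ and $f\partial_y$ commute with the grading given by the partial fraction decomposition. Comparing pole orders at each place of $\mathbb P^1$ in the two pieces $\partial_L(\text{coeffs})$ and $f\partial_y(\cdot)$ would then show that the coefficients' derivatives are forced to vanish, except for an additive constant. Specializing the coefficients into $C_K^{\alg}$, which is possible because they satisfy constant-coefficient algebraic conditions, then yields $1/f=\partial_y g$ with $g\in C_K(y)$, or $1/f=c\,\partial_y h/h$. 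If this direct comparison becomes messy, I would fall back on Rosenlicht's original route through the structure theorem for Liouvillian first integrals.
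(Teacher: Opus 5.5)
The paper gives no proof of this statement. It is quoted from \cite{Rosenlicht} and used as a black box in Lemma~\ref{internal-squarefree}, so your sketch has to stand on its own.

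Your reduction is fine. The coefficients of the minimal polynomial of a constant over $L(y)$ are constants, and $L$ is algebraically closed in $L(y)$, so you get a constant $v=R(y)\in L(y)\setminus L$ with $R\in L(X)$. The final descent from $L$ to $C_K$ or $C_K^{\alg}$ is also fine; it is best done with the residue criteria of Proposition~\ref{prop-rationalsolresidue}, since the residues of $1/f$ do not depend on the ambient field.

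The gap is the middle step, which carries all the content. First, the dichotomy ``$Du\in L$ or $Du/u\in L$'' is vacuous, because $Dv=0$ already. It therefore says nothing about where the logarithmic alternative comes from. Second, comparing pole orders in the partial fraction expansion of $R$ does not work. Write $R^{\partial_L}$ for $R$ with $\partial_L$ applied to its coefficients. Then $\partial_L$ does not respect the partial fraction decomposition: it sends $\alpha(X-\beta)^{-j}$, with $\beta\in L\setminus C_L$, to $\alpha'(X-\beta)^{-j}+j\alpha\beta'(X-\beta)^{-j-1}$. At such a pole the top terms of $R^{\partial_L}$ and $f\,\partial_XR$ cancel exactly when $\beta'=f(\beta)$, and this can happen because $L$ may contain solutions of the equation. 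Two examples show the failure concretely:
\begin{itemize}
\item For $f=1$ and $t\in L$ with $t'=1$, the new constant $1/(y-t)$ behaves exactly this way.
\item For $f=X$ and $e\in L$ with $e'=e$, the new constant $y/e$ has the nonconstant coefficient $1/e$. So your claim that the coefficients' derivatives vanish up to an additive constant is false in the logarithmic case.
\end{itemize}
No termwise comparison of this kind can succeed without first choosing a good generator of the constant field.

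The missing idea is to use the constancy of the coefficients of $f$ globally rather than termwise. Take $L$ algebraically closed, $F=L(y)$, $D=\partial_L+f(y)\,\partial/\partial y$ and $\theta=f(y)\,\partial/\partial y$. Since $\partial_L$ kills $f(y)$, one has $[D,\theta]=0$. Hence $\theta$ maps the constant field $C_F$ into itself and is nonzero on $C_F\setminus C_L$. As $C_F$ is linearly disjoint from $L$ over $C_L$, it has transcendence degree one over $C_L$. A L\"uroth/genus argument applied to $LC_F\subseteq L(y)$ then gives $C_F=C_L(z)$ with $z=R(y)$. So $\theta z=\phi(z)$ for some nonzero $\phi\in C_L(z)$, which unwinds to $R^{\partial_L}=-\phi\circ R$ and $1/f=\partial_XR/\phi(R)$ in $L(X)$.

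Now a pole comparison does work, but on the target rather than the source. If $\phi$ had a pole at $z_0\in C_L$, then at any $x_0$ with $R(x_0)=z_0$ the function $R^{\partial_L}=(R-z_0)^{\partial_L}$ is regular while $\phi\circ R$ has a pole. The same argument with $1/R$ handles $z_0=\infty$. Hence $\phi$ is a polynomial of degree at most $2$, so $\phi(z)\,\partial_z$ generates a $\mathbb{G}_a$ or $\mathbb{G}_m$ action on $\mathbb{P}^1$. The partial fraction decomposition of $1/\phi$ then yields $1/f=\partial_X(R/a)$, or $1/f=\partial_X\bigl(-1/(a(R-\gamma))\bigr)$, or $1/f=c\,\partial_Xh/h$ with $h=R-\gamma$ or $h=(R-\gamma_1)/(R-\gamma_2)$. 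The residue descent finishes the proof.

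Falling back on Rosenlicht's paper or on Liouvillian first integrals at exactly this point only defers the step that needs an argument.
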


Explicit criteria are known for determining whether the differential equations   $\frac{\partial g}{\partial y} = f(y)$ and $\frac{\partial g}{\partial y}\Big/g=f(y)$ admit nonzero rational solutions by computing residues of the rational function $f(y)$. 
We now recall  the notion of the residue of a rational function.
Let $K$ be a  field of characteristic zero and $K(y)$ be the field of rational functions in $y$ over $K$. 
Let $f = \frac{a(y)}{b(y)} \in K(y)$ with $a, b \in K[y]$ and $\gcd(a, b) = 1$. Over the algebraic closure $K^{\alg}$,   $f$ admits a unique partial fraction decomposition
\[
f = p + \sum_{i=1}^{m} \sum_{j=1}^{n_i} \frac{\alpha_{i,j}}{(y - \beta_i)^j},
\]
where $p \in K[y]$, $m, n_i \in \mathbb{N}$, the $\beta_i\in K^{\alg}$  are
the distinct roots of $b$ with multiplicity $n_i$ and $\alpha_{i,j} \in K(\beta_1, \dots, \beta_m)$. The residue of $f$ at 
$\beta_i$ is defined by $\operatorname{res}(f, \beta_i) =\alpha_{i,1}$. In the case when $b$ is square-free, 
each $\beta_i$ is a simple pole and we have 
\[ \operatorname{res}(f, \beta_i) = \frac{a(\beta_i)}{b'(\beta_i)}. \]

\begin{prop} \label{prop-rationalsolresidue}
The following criteria hold.\begin{itemize}
    \item[(1)] 
(\cite[Proposition 2.1]{vanderput2001})
The differential equation $\frac{\partial g(y)}{\partial y} = f(y)$  has a nonzero rational  solution $g(y)\in K(y)$ if and only if all the residues of $f(y)$ are zero.

\item[(2)]
 (\cite[Proposition 3.1]{bostan2024}) The logarithmic differential equation
\(\frac{\partial g(y)/\partial y}{g(y)}=f(y)\)
admits a nonzero solution \(g(y)\in K(y)\)  if and only if  $f(y)$ has at most a simple pole with integral residue at each point of $K^{\text{alg}}$ and vanishes at infinity.
 \end{itemize} 
\end{prop}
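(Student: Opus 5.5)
Both criteria are cited results, so the plan is to recover each from the partial fraction decomposition of \(f\) over \(K^{\alg}\), noting first that solvability over \(K\) and over \(K^{\alg}\) agree. Write
\[
f = p + \sum_{i=1}^{m} \sum_{j=1}^{n_i} \frac{\alpha_{i,j}}{(y - \beta_i)^j}
\]
as above. Since \(\partial/\partial y\) commutes with the Galois action on \(K^{\alg}(y)\), a solution over \(K^{\alg}\) can be averaged over a finite Galois orbit to give one over \(K\). For (1) this means taking the trace and dividing by the degree. For (2) it means taking the norm of \(g\), whose logarithmic derivative is the sum of the conjugates, i.e. \([L:K]\,f\); we then adjust by a \([L:K]\)-th root. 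That root is rational provided the orders of \(g\) at the \(\beta_i\) are multiples of \([L:K]\). If this fails, I will instead build \(g\) directly from the data in \(f\), as described below. So I may work over \(K^{\alg}\).

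For (1), the plan is to integrate term by term. The polynomial part \(p\) and every term \(\alpha_{i,j}(y-\beta_i)^{-j}\) with \(j\ge 2\) have explicit rational antiderivatives. Hence \(f\) has a rational antiderivative exactly when \(\sum_i \alpha_{i,1}/(y-\beta_i)\) does. Conversely, the derivative of any rational function has zero residue at every \(\beta\), by a local Laurent expansion computation. The residues \(\alpha_{i,1}\) therefore must all vanish, because a nonzero sum of simple fractions \(\sum c_i/(y-\beta_i)\) is never a derivative: it would have a simple pole with nonzero residue. Nonzeroness of \(g\) is harmless, since we can add a constant.

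For (2), if \(g=c\prod (y-\gamma_k)^{e_k}\) with \(e_k\in\mathbb Z\), then
\[
\frac{g'}{g}=\sum_k \frac{e_k}{y-\gamma_k}.
\]
This function has only simple poles, each with integer residue, and it vanishes at infinity. Conversely, if \(f\) has these properties, then \(f=\sum_i \alpha_{i,1}/(y-\beta_i)\) with \(\alpha_{i,1}\in\mathbb Z\). Setting \(g=\prod_i (y-\beta_i)^{\alpha_{i,1}}\) gives a solution. This \(g\) is Galois-invariant, because conjugate poles carry equal residues, so \(g\in K(y)\) with no descent issue.

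The main obstacle is the precise formulation of ``vanishes at infinity'' and the Galois descent in (2). Constructing \(g\) explicitly as above sidesteps the root-extraction problem. I would check that it agrees with the convention of \cite{bostan2024}, where the residue at infinity is governed by the degree condition. Beyond that, the cited propositions can simply be invoked.
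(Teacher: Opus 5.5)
Your argument is correct. Note, though, that the paper does not prove this proposition at all: both criteria are imported by citation from \cite{vanderput2001} and \cite{bostan2024}. So what you have written is a self-contained replacement for those references, not a reconstruction of an argument in the text. The gain is that the reader sees exactly what is used. For (1), derivatives of rational functions have zero residues (read off from Laurent expansions), while the polynomial part and the higher-order polar terms integrate rationally; the passage from $K^{\alg}$ to $K$ is done by a trace. For (2), the partial-fraction data of $f$ directly yield $g=\prod_i(y-\beta_i)^{\alpha_{i,1}}$. This $g$ lies in $K(y)$ because conjugation preserves residues and integers are Galois-fixed.

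You can trim your first paragraph. The ``only if'' directions need no descent, since a solution over $K$ is one over $K^{\alg}$. In (2) the norm-and-root detour is superfluous: any solution over $K^{\alg}$ lands in $K(y)$ once divided by its leading constant, by the same residue-invariance argument. So the only descent you actually use is the trace in (1). Your reading of ``vanishes at infinity'' as ruling out a polynomial part is correct. Equivalently, $f\,dy$ has at most a simple pole at $\infty$, and its residue $-\sum_i\alpha_{i,1}$ there is then automatically an integer.
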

 
\begin{lemma} \label{internal-squarefree}
Let $K$ be a field of constants and let $f\in K[y]$ be a square-free polynomial of degree $d\geq 3$. 
Let $r_1,\ldots,r_d$ be the residues of $1/f$ at the zeros of $f$.
Suppose there exists $i\neq j$ such that $r_i/r_j\notin\mathbb Q$. 
Then the generic type of $y'=f(y)$ is orthogonal to the constants.
\end{lemma}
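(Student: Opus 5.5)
We will derive this from Rosenlicht's criterion (Theorem~\ref{Rosenlicht}) applied to $1/f$. It suffices to show that $1/f$ is neither of the two forms in~\eqref{eq-partialfraction}, and each form will be excluded with one part of Proposition~\ref{prop-rationalsolresidue}.

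Since $f$ is square-free of degree $d\geq 3$, $1/f$ has exactly $d$ simple poles at the distinct roots $\beta_1,\ldots,\beta_d$ of $f$ in $K^{\alg}$. Its partial fraction decomposition is
\[
\frac{1}{f}=\sum_{i=1}^d \frac{r_i}{y-\beta_i}, \qquad r_i=\frac{1}{f'(\beta_i)}\neq 0,
\]
with no polynomial part, since $\deg f\geq 1$. All residues are therefore nonzero.

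First, suppose $1/f=\partial g/\partial y$ with $g\in K(y)$. If $g$ is constant, then $1/f=0$, which is absurd. Otherwise Proposition~\ref{prop-rationalsolresidue}(1) forces every residue of $1/f$ to vanish, contradicting $r_i\neq 0$. (One can also see this directly: the derivative of a rational function has zero residue at every pole.)

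Second, suppose $1/f=c\,h'/h$ with $c\in K^{\alg}$ and $h\in K^{\alg}(y)^\times$. Then $c\neq 0$, since $1/f\neq 0$. So $\tfrac{1}{c}\cdot\tfrac{1}{f}$ is a logarithmic derivative over $K^{\alg}$, and by Proposition~\ref{prop-rationalsolresidue}(2), applied over $K^{\alg}$, all its residues $r_k/c$ must be integers. Each $r_k/c$ is a nonzero integer $n_k$, which gives $r_i/r_j=n_i/n_j\in\mathbb Q$ for all $i,j$. This contradicts the hypothesis that $r_i/r_j\notin\mathbb Q$ for some $i\neq j$. The direct argument is equally simple: writing $h=\lambda\prod_k (y-\gamma_k)^{m_k}$ shows that $h'/h$ has integer residues $m_k$, so $c\,h'/h$ has residues in $c\mathbb Z$.

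With both forms excluded, Theorem~\ref{Rosenlicht} gives that the generic type of $y'=f(y)$ is orthogonal to the constants. Here we take the base field to be $K$ itself, viewed as a differential field with trivial derivation, so that $C_K=K$.

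The only delicate step is this bookkeeping about the field in Theorem~\ref{Rosenlicht}: $c$ and $h$ are allowed to come from $C_K^{\alg}$, so Proposition~\ref{prop-rationalsolresidue}(2) must be applied over $K^{\alg}$. The residue condition in that proposition is already stated at points of $K^{\alg}$, so this causes no trouble. The hypothesis $d\geq 3$ is not needed for this argument beyond ensuring that there are at least two poles.
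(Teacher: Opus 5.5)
Your proposal is correct and follows essentially the same route as the paper. You reduce to Rosenlicht's criterion (Theorem~\ref{Rosenlicht}), rule out the exact-derivative form via Proposition~\ref{prop-rationalsolresidue}(1) because every residue $r_i=1/f'(\beta_i)$ is nonzero, and rule out the logarithmic-derivative form via Proposition~\ref{prop-rationalsolresidue}(2) because integrality of all $r_k/c$ would make every ratio $r_i/r_j$ rational.
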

\begin{proof}
By Theorem \ref{Rosenlicht},
it suffices to show that the differential equations $\frac{\partial g}{\partial y}=\frac{1}{f(y)}$ and $\frac{\partial h(y)}{\partial y}=\frac{c}{f}\cdot h(y)$ for all $c\in K^{\alg}\backslash\{0\}$
do not have nonzero solutions in $K^{\alg}(y)$. 
Assume the solutions of $f(y)=0$ are $\beta_1,\ldots, \beta_d$.
Since $f$ is square-free,  $\beta_i\neq\beta_j$ for $i\neq j$.
So $r_i:=\text{res}(1/f,\beta_i)=\frac{1}{f'(\beta_i)}$ and $\text{res}(c/f,\beta_i)=cr_i$.
 Since $r_i\neq0$, by Proposition \ref{prop-rationalsolresidue} (1),
 $\frac{\partial g}{\partial y}=\frac{1}{f(y)}$ does not have a rational function solution over $K$ (or equivalently over $K^{\alg}$).
 It remains to show there does not exist nonzero $c\in K^{\alg}$ such that $cr_i\in\mathbb Z$ for every $i=1,\ldots,d$. Suppose the contrary. Then for any pair $(i,j)$, $r_i/r_j\in\mathbb Q$, a contradiction to the hypothesis. Thus, by Proposition \ref{prop-rationalsolresidue} (2) and Theorem \ref{Rosenlicht}, the generic type of $y'=f(y)$ is orthogonal
to the constants.     
\end{proof}

\begin{lemma}\label{residue-irreduciblepoly}
Let $K$ be a field of constants 
of characteristic zero, and let
$f\in K[y]$ have degree $n\geq 3$.
Let $K_f$ be the subfield generated over $\mathbb Q$ by the
coefficients of $f$. If $f$ is irreducible over $K_f$, then the 
generic  type 
 of $\delta y=f(y)$ in $S_\delta(K)$ is
orthogonal to the constants.
\end{lemma}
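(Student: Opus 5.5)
The plan is to apply Lemma~\ref{internal-squarefree}. First, $f$ is square-free: it is irreducible over the characteristic-zero field $K_f$, so it is separable there, and its roots $\beta_1,\ldots,\beta_n$ in $K^{\alg}$ are distinct. Once we exhibit $i\neq j$ with $r_i/r_j\notin\mathbb Q$, where $r_i=1/f'(\beta_i)$, that lemma gives orthogonality to the constants. So we argue by contradiction and assume that every ratio $r_i/r_j$ is rational.

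The key point is Galois symmetry. Let $G=\operatorname{Gal}(K_f(\beta_1,\ldots,\beta_n)/K_f)$. Irreducibility of $f$ over $K_f$ makes $G$ act transitively on the roots. Since $f$ has coefficients in $K_f$, any $\tau\in G$ satisfies $\tau(r_i)=1/f'(\tau\beta_i)$, so $G$ permutes the residues in the same way it permutes the roots. Now fix $i$ and suppose $r_j=q_j r_i$ with $q_j\in\mathbb Q$ for every $j$. Applying $\tau$, which fixes $\mathbb Q$, shows that the multiset of ratios $\{r_k/r_{\tau(i)}\}$ equals $\{q_j\}$ for every $\tau$.

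To get the contradiction I would use the norm and trace.
\begin{itemize}
\item Since $\sum_k \beta_k^m/f'(\beta_k)=0$ for $0\le m\le n-2$, the case $m=0$ gives $\sum_k r_k=0$, so $\sum_j q_j=0$.
\item Choose $\tau$ with $\tau(\beta_i)=\beta_j$. Then $r_j=\tau(r_i)$, while also $r_j=q_jr_i$. Because $\tau$ has finite order $e$, iterating gives $r_i=\tau^e(r_i)=\bigl(\prod_{t<e}q_{\tau^t(i)}\bigr)r_i$, so the rational numbers $q$ along each $\tau$-orbit multiply to $1$.
\end{itemize}

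A cleaner route is to use absolute values. Taking $i$ with $|q|$ maximal is not directly available, since the $r$ are abstract, but it works on the rational ratios. Let $i$ be chosen so that $r_i$ has the maximal value of $|r_k/r_{i_0}|$ for a fixed $i_0$; note the ratios $r_k/r_{i_0}$ are rational. Let $M$ be this maximum. Applying suitable $\tau$ shows the multiset $\{r_k/r_i\}$ is independent of $i$, so every $r_i$ achieves the same maximum. Hence $|r_k/r_{i}|\le 1$ for all $k$, with equality for all $k$, and so $r_k=\pm r_i$.

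Then $r_k^2=r_i^2$ for all $k$, which makes $r_1^2$ a $G$-invariant element. The relation $\sum_k r_k\beta_k=0$ (the case $m=1$, valid since $n\ge3$) becomes $\sum \epsilon_k\beta_k=0$ with signs $\epsilon_k=\pm1$; together with $\sum \epsilon_k=0$ this says $n$ is even. The main obstacle is turning this into a contradiction in general. I would compare with the discriminant, $\prod_k f'(\beta_k)=\pm\operatorname{disc}(f)$, and exploit that $r_k=\epsilon_k r$ means $f'(\beta_k)=\epsilon_k/r$. Then $f'$ takes only two values on the roots, so $f'(y)^2-r^{-2}$ vanishes at all $n$ roots, and $f\mid (f')^2-r^{-2}$. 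Comparing degrees, $2n-2\ge n$, allows a quotient of degree $n-2$, so this needs care. Differentiating the identity $(f')^2-c=fh$ and using irreducibility (so $f$ is coprime to $f'$) should bound things; if this fails, I will instead use the $m=1,\dots,n-2$ relations $\sum\epsilon_k\beta_k^m=0$. These say that the two halves of the roots have equal power sums up to degree $n-2$, so by Newton's identities their elementary symmetric functions agree up to degree $\min(n-2,n/2)$. For $n\ge4$ that forces the two half-polynomials to coincide, contradicting distinctness of the roots. Since $n$ is even, this covers all cases.
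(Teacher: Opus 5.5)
Your proof is correct, and it finishes by a different route from the paper's. Both arguments reduce to Lemma~\ref{internal-squarefree} and use Galois transitivity to get $r_k=\pm r_1$.

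The paper does this step in one line. Since $\tau_i(r_1)=q r_1$ with $q\in\mathbb Q$ fixed by $\tau_i$, we get $r_1=\tau_i^N(r_1)=q^N r_1$, so $q=\pm1$. Your second bullet was one step from this: $\tau$ fixes $q_j$, hence $\tau^e(r_i)=q_j^e r_i$. The orbit product you wrote is not what the iteration gives. Your maximum-modulus argument on the $G$-invariant multiset $\{r_k/r_i\}_k$ also works, once you add one line. The maximum is then $1$ for every base index, so $|r_k/r_j|\le 1$ for all $j,k$, and swapping $j$ and $k$ gives equality.

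For the contradiction, let $I_1,I_2$ be the index sets where $r_k=r_1$ and $r_k=-r_1$, and let $P,Q$ be the products of $y-\beta_k$ over $I_1$ and $I_2$. The paper writes $1/f=r_1(P'/P-Q'/Q)$ and deduces $P'Q-PQ'=1/(r_1\lambda)\in L^\times$. It then runs a three-case leading-coefficient analysis: $Q=1$; $\deg P\neq\deg Q$; and $\deg P=\deg Q$, handled via $D=Q-P$.

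You instead use the vanishing $\sum_k\beta_k^m/f'(\beta_k)=0$ for $0\le m\le n-2$. The case $m=0$ forces $|I_1|=|I_2|$, which disposes of odd $n$ and of the paper's first two cases at once. Newton's identities in characteristic zero, together with $n/2\le n-2$, then force $P=Q$, contradicting disjointness of the roots.

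The two arguments are the same fact read at infinity. We have $P'/P-Q'/Q=\sum_{m\ge0}\bigl(p_m(I_1)-p_m(I_2)\bigr)y^{-m-1}$ with $p_m(I)=\sum_{k\in I}\beta_k^m$, and the paper's identity says this is $O(y^{-n})$. Your version is more uniform and avoids the case split, at the cost of quoting two classical identities; the paper's is entirely self-contained. When you write it up, drop the unfinished digression about $f\mid (f')^2-r^{-2}$, which you do not need.
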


 \begin{proof}
 Let $L\supset K_f$ be a splitting field of $f$, and write
\[
f(y)=\lambda\prod_{i=1}^n(y-\beta_i),
\qquad \lambda=\operatorname{lc}(f).
\]
Since $f$ is irreducible over $K_f$, its roots
are distinct and the residues of $1/f$ at these roots are
\[
r_i=\frac{1}{f'(\beta_i)}\in L^\times
\qquad (1\leq i\leq n),
\]
where $f'(y)=\frac{\partial f(y)}{\partial y}$.
 By Lemma \ref{internal-squarefree},
 it suffices to show that there exists $i\neq j$ such that $r_i/r_j\notin \mathbb Q$.
 
Suppose, towards a contradiction, that all these ratios lie in
$\mathbb Q$.
Since the Galois group $\operatorname{Aut}(L/K_f)$ acts transitively on the roots of $f$,
for each $i$, there exists $\tau_i\in \operatorname{Aut}(L/K_f)$ such that $\tau_i(\beta_1)=\beta_i$.
So \[r_i=\frac{1}{f'(\beta_i)}=\tau_i\big(\frac{1}{f'(\beta_1)}\big)=\tau_i(r_1)=q_{i1}r_1, \quad \text{for some } q_{i1}\in\mathbb Q.\]
Set $N:=|\operatorname{Aut}(L/K_f)|$. Then $r_1=\tau_i^N(r_1)=q_{i1}^Nr_1$ and $q_{i1}^N=1$.
As $q_{i1}\in\mathbb Q$, $q_{i1}=\pm 1$ and $r_i=\pm r_1$ for each $i$. 
Let $$I_1:=\{i:\,r_i=r_1\}, \qquad I_2:=\{i:\,r_i=-r_1\}.$$
So the partial fraction decomposition of $1/f$ over $L$ is $$\frac{1}{f}=\sum_{i\in I_1}\frac{r_1}{y-\beta_i}-\sum_{i\in I_2}\frac{r_1}{y-\beta_i}.$$
Let $P(y)=\prod_{i\in I_1}(y-\beta_i)\in L[y]$ and $Q(y)=\prod_{i\in I_2}(y-\beta_i)\in L[y]$, where an empty product is $1$. Then $P(y)$ and $Q(y)$ are coprime with $f=\lambda P(y)Q(y)$.
So we have $$\frac{1}{f}=r_1\frac{\partial P(y)/\partial y}{P(y)}-r_1\frac{\partial Q(y)/\partial y}{Q(y)}=r_1\frac{\lambda}{f}\cdot \left(\frac{\partial P(y)}{\partial y} Q-P\frac{\partial Q(y)}{\partial y}\right).$$
So $\frac{\partial P(y)}{\partial y} Q-P\frac{\partial Q(y)}{\partial y}=1/(r_1\lambda)\in L$.

If $I_2=\varnothing$, then $Q=1$ and $\deg P=n\geq 3$,
so $P'$ cannot be of degree zero. Thus this case is impossible.
Set $n_1=\deg P$ and $n_2=\deg Q$. If $n_1\neq n_2$,
then $P'Q-PQ'$ has leading coefficient $n_1-n_2\neq 0$ and degree
\[
n_1+n_2-1=n-1\geq 2,
\]
again a contradiction.
It remains to consider $n_1=n_2=k$. Then $n=2k\geq 3$, so $k\geq 2$.
Put $D=Q(y)-P(y)$. As $P$ and $Q$ are monic, $\deg(D)<k$ whenever $D\neq 0$.
If $D=0$, then $P'Q-PQ'=0$,  contradicting $P'Q-PQ'=1/(r_1\lambda)\neq0$. 
Otherwise, let $d=\deg D<k$. We have
$P'Q-PQ'=P'D-PD'.$
Its leading coefficient is $(k-d)\operatorname{lc}(D)\neq 0$,
and its degree is $k+d-1\geq 1$.
It therefore cannot be a nonzero element in $L$.
Thus, we get a contradiction. This proves the claim that there always exists $i\neq j$ such that $r_i/r_j\notin\mathbb Q$.  
By Lemma \ref{internal-squarefree},
the generic type is orthogonal to the constants.
\end{proof}
\vskip5pt

 In the special case where $f$ is a cubic polynomial irreducible over its field of definition, Lemma \ref{internal-squarefree} could be strengthened  by proving a stronger result in terms of the residues of $1/f$.
 \begin{lemma} \label{residue-cubic}
 Let $K$ be a $\delta$-field and let $f\in C_K[y]$ be a cubic polynomial irreducible over its field of definition $K_f$. 
 Let $r_1,r_2,r_3$ be the residues of $1/f$ at the zeros of $f$. 
 Then 
 \begin{itemize}
     \item the residues of $dy/f(y)$ at distinct poles are not rational
multiples of one another; equivalently, $r_i/r_j\notin\mathbb Q$ for all $1\leq i\neq j\leq 3$.
     \item The generic type of $y'=f(y)$ is  strictly minimal.
 \end{itemize}
 \end{lemma}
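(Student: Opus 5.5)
The plan is to prove the first bullet by a Galois-theoretic argument, exactly parallel to the proof of Lemma~\ref{residue-irreduciblepoly} but exploiting that the degree is three. Then the second bullet follows immediately from Lemma~\ref{hrushovski2003-lm2.23}.

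Write $f=\lambda(y-\beta_1)(y-\beta_2)(y-\beta_3)$ with $\beta_i$ distinct in a splitting field $L$ of $f$ over $K_f$. Then $r_i=1/f'(\beta_i)\neq 0$, and $\sum_i r_i=0$, since the sum of the residues of $1/f$ at finite poles is minus the residue at infinity, which vanishes because $\deg f\geq 2$.

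Suppose towards a contradiction that $r_i/r_j=q\in\mathbb Q$ for some $i\neq j$, say $r_2=q r_1$. Since $f$ is irreducible, $\operatorname{Gal}(L/K_f)$ acts transitively on the roots, and its image in $S_3$ is $A_3$ or $S_3$. In either case it contains a $3$-cycle $\tau$, which we may take to satisfy $\beta_1\mapsto\beta_2\mapsto\beta_3\mapsto\beta_1$. Because $f$ has coefficients in $K_f$, we have $\tau(r_i)=r_{\tau(i)}$.

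Apply $\tau$ to $r_2=qr_1$ to get $r_3=qr_2=q^2r_1$, and apply it again to get $r_1=qr_3=q^3r_1$. Hence $q^3=1$, so $q=1$ because $q\in\mathbb Q$. Then $r_1=r_2=r_3$, and $\sum r_i=0$ forces $3r_1=0$, contradicting $r_1\neq 0$.

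For a general pair $(i,j)$ we relabel; any pair can be moved to $(1,2)$ up to reversing orientation. Using $\tau^{-1}$ in place of $\tau$ handles the reversed case identically. The equivalence with the statement about $dy/f(y)$ holds because the residues of the $1$-form $\omega=dy/f(y)$ on $\mathbb P^1$ at its finite poles are exactly the $r_i$. Moreover $\omega$ has a zero of order $n-2=1$ at infinity, so it has no pole there.

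For the second bullet, the set $\{y:\delta y=f(y),\ f(y)\neq 0\}$ is precisely $\Xi^*(\mathbb P^1,\omega)$, after noting that $\omega(a)(\delta a)=1$ means $\delta a=f(a)$. The form $\omega$ has three nonzero residues, pairwise not rational multiples of each other by the first bullet. Since $\omega$ is defined over the constant field $K_f^{\alg}$, Lemma~\ref{hrushovski2003-lm2.23} gives strict minimality over $K_f^{\alg}$. This is exactly the sense of ``strictly minimal generic type'' fixed just before Lemma~\ref{lemma-strictlyminimal}.

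The only delicate point I expect is the claim that the Galois group contains a $3$-cycle. This holds because a transitive subgroup of $S_3$ has order divisible by $3$. I also need to take care that $\tau$ acts compatibly on the residues, which is immediate from $r_i=1/f'(\beta_i)$ with $f'\in K_f[y]$.
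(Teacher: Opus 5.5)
Your argument is correct, but it follows a different route from the paper.

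\textbf{The paper's route.} The paper's proof is a direct computation. From $r_1/r_2=q$ it obtains $-(\alpha_2-\alpha_3)/(\alpha_1-\alpha_3)=q$. It combines this with $\alpha_1+\alpha_2+\alpha_3=-c$ to write $\alpha_2$ as an affine function of $\alpha_3$ with coefficients in $\mathbb{Q}(c)$. Substituting into $f(\alpha_2)-q_1^3f(\alpha_3)=0$ gives a linear equation for $\alpha_3$ over $K_f$. Irreducibility forces its coefficients to vanish, so $b=c^2/3$ and $d=c^3/27$, i.e.\ $f=(y+c/3)^3$, a contradiction.

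\textbf{Your route.} You use a $3$-cycle in $\operatorname{Gal}(L/K_f)$ to get $q^3=1$, hence $q=1$. You then rule out $r_1=r_2=r_3$ with the identity $\sum_i 1/f'(\beta_i)=0$. This is shorter and more conceptual. It is essentially the Galois argument of Lemma~\ref{residue-irreduciblepoly}, sharpened from $q^N=1$ to $q^3=1$, with the degree count replaced by the vanishing of the residue at infinity.

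\textbf{What each buys.} Your argument extends verbatim to irreducible $f$ of any odd prime degree $p$: a $p$-cycle gives $q^p=1$, so $q=1$, and $p r_1=0$ is impossible. Some such restriction is genuinely needed. For $f=y^4+1$, which is irreducible over $\mathbb{Q}$, one has $r_i=-\beta_i/4$, so antipodal roots give residue ratio $-1$. The paper's computation needs neither Galois theory nor the residue theorem, at the cost of being tailored to cubics.

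The second bullet is deduced from Lemma~\ref{hrushovski2003-lm2.23} in the same way in both.
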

 \begin{proof}
 Scaling by the leading coefficient, we may assume that \(f\) is monic.
 Let $f=y^3+cy^2+by+d=(y-\alpha_1)(y-\alpha_2)(y-\alpha_3)$, where $\alpha_i\in K_f^{\text{alg}}$.
 Then $r_i:=\text{res}(1/f,\alpha_i)=1/f'(\alpha_i)$.
 Suppose on the contrary that there exists $i\neq j$ such that $r_i/r_j\in\mathbb Q$.
 Without loss of generality, suppose $r_1/r_2=-\frac{\alpha_2-\alpha_3}{\alpha_1-\alpha_3}=q\in\mathbb Q$.
 Since we also have $\alpha_1+\alpha_2+\alpha_3=-c$, $\alpha_2=q_1\alpha_3+c(q_1-1)/3$, where $q_1=(1+2q)/(1-q)\in\mathbb Q$ (here $q\neq1$, for otherwise, $\alpha_3=-c/3\in \mathbb Q(c)$).
 By taking the powers of $\alpha_2$ up to degree 3 and adding them together with corresponding coefficients of $f$, we get 
 \begin{eqnarray}
     0 &=&\alpha_2^3+c\alpha_2^2+b\alpha_2+d-q_1^3f(\alpha_3) \nonumber \\
 &=& q_1(q_1^2-1)(-b+c^2/3)\cdot\alpha_3\nonumber \\
 & & +[(q_1-1)^3c^3/27+(q_1-1)^2c^3/9+(q_1-1)bc/3+d-dq_1^3]. \nonumber
 \end{eqnarray}
Thus, $\alpha_3$ satisfies a linear polynomial, which must be 
the zero polynomial, by irreducibility of $f$. 
Note that $q_1\neq0$, or $1$, or $-1$, for otherwise, $\alpha_2=-c/3\in\mathbb Q(c)$, or $\alpha_2=\alpha_3$, or $\alpha_1=-c/3\in\mathbb Q(c)$ respectively, contradicting the irreducibility of $f$. So $b=c^2/3$.
Substituting $b=c^2/3$ into the degree zero term of the above equation, we get $(1-q_1^3)(d-c^3/27)=0$.
As $q_1\in\mathbb Q$ and $q_1\neq1$,  $d=c^3/27$.
So we have $f=y^3+cy^2+c^2y/3+c^3/27=(y+c/3)^3$, a contradiction to the irreducibility of $f$. Thus, no two distinct residues $r_i$, $r_j$ are rational multiples of each other. By Lemma \ref{hrushovski2003-lm2.23}, the generic type of $y'=f(y)$  is strictly minimal.
 \end{proof} 

We now consider the differential equation  $y'=y^3+a$ for $a\in C_{\mathbb{U}}\backslash\{0\}$ and  show that any transcendental solution to this differential equation does not satisfy any nonzero difference polynomial equation in a certain sense.

\begin{lemma}
\label{cor-pgl3}
Let $K$ be a $\sigma$-$\delta$-field of characteristic zero,
and let \(f(y)=y^3+a\) with $a\in K^\delta\backslash\{0\}$. Set
$f^\sigma(y):=y^3+\sigma(a)$, and consider
\[
V:=\Xi^*\left(\mathbb P^1,\frac{dy}{f(y)}\right),
\qquad
V^\sigma:=\Xi^*\left(\mathbb P^1,
                    \frac{dy}{f^\sigma(y)}\right).
\]
 Then 
\begin{enumerate}
    \item Both $V$ and $V^\sigma$ are strictly minimal
    over the constant field $k:=\mathbb Q(a,\sigma(a))$,
    relative to $\mathrm{DCF}_0$.
    \item $V$ and $V^\sigma$ are non-orthogonal if and
    only if $\sigma(a)=\pm a$.
\end{enumerate}
\end{lemma}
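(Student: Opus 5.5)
For part (1), the plan is to apply Lemma~\ref{hrushovski2003-lm2.23} to the form $\omega = dy/f(y)$ with $f(y)=y^3+a$. Since $a\neq 0$, $f$ is square-free, with roots $\beta\zeta^i$ for $i=0,1,2$, where $\beta^3=-a$ and $\zeta$ is a primitive cube root of unity. The residues are
\[
r_i = \frac{1}{f'(\beta\zeta^i)} = \frac{1}{3\beta^2\zeta^{2i}} = \frac{\zeta^{-2i}}{3\beta^2}.
\]
Thus the ratios $r_i/r_j$ are $\zeta^{\pm 1}$ or $\zeta^{\pm 2}$, which are nonreal cube roots of unity and hence not rational. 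At infinity the form $dy/y^3$ has no residue, since $\omega$ has a zero of order $1$ at $\infty$. So $\omega$ has three nonzero residues, pairwise not rational multiples of one another, and Lemma~\ref{hrushovski2003-lm2.23} gives strict minimality over $k^{\alg}$. To get strict minimality over $k$ itself, note that the residue hypothesis of Lemma~\ref{hrushovski2003-lm2.23} is computed over the algebraic closure. I would use it over $k^{\alg}$ and observe that the convention stated before Lemma~\ref{lemma-strictlyminimal} only requires strict minimality over some subfield over which the set is defined. Alternatively, I would remark that finitely many definable sets and no nontrivial equivalence relations over $k^{\alg}$ pass down to $k$. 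The same argument applies verbatim to $V^\sigma$, since $\sigma(a)\neq 0$.

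For part (2), I would invoke Lemma~\ref{lemma-strictlyminimal} with $f_1=f$, $f_2=f^\sigma$, and $n=3$, both polynomials being monic. Nonorthogonality holds if and only if there are constants $a_0\in C^\times$ and $b$ with $a_0^{2}=1$ and $f^\sigma(a_0y+b)=a_0 f(y)$. The formula for $b$ gives $b=\tfrac13(a_0\cdot 0-0)=0$, because both quadratic coefficients vanish. Hence $a_0=\pm1$, and the condition reads $a_0^3y^3+\sigma(a)=a_0y^3+a_0a$, that is, $\sigma(a)=a_0 a=\pm a$. Conversely, if $\sigma(a)=\pm a$, then $y\mapsto \pm y$ is a M\"obius map carrying $V$ onto $V^\sigma$, giving nonorthogonality by Lemma~\ref{lm-PGL2}. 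For the minus sign, check that $(-y)'=-y^3-a=(-y)^3+(-a)$, as required.

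The main obstacle I expect is the field-of-definition subtlety in (1): Lemma~\ref{hrushovski2003-lm2.23} is stated for a constant field $K$, presumably algebraically closed, whereas the claim is over $k=\mathbb Q(a,\sigma(a))$. I would handle this by checking that the Hrushovski--Itai argument only uses the residues of $\omega$, which are computed geometrically. Failing that, I would phrase strict minimality over $k^{\alg}$, which suffices for all later applications via Lemma~\ref{lm-strictminimal}.
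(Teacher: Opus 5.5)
Your proposal is correct and follows essentially the same route as the paper. You compute the three residues $1/(3\beta^2\zeta^{2i})$, note that their ratios are nonreal cube roots of unity, and invoke Lemma~\ref{hrushovski2003-lm2.23}; you then apply Lemma~\ref{lemma-strictlyminimal} over $k^{\alg}$ with $b=0$ and $a_0^2=1$ to get $\sigma(a)=\pm a$. Your extra remark is also valid and makes explicit a step the paper passes over: strict minimality over $k^{\alg}$ descends to $k$, because a smaller parameter set has fewer definable sets and fewer definable equivalence relations.
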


\begin{proof}
Choose $\beta\in\mathbb U$ with $\beta^3=-a$, and let
$\omega$ be a primitive cube root of unity.
Since $a\neq0$, 
\(\beta, \beta\omega\) and \(\beta\omega^2\)
are all the simple poles of $dy/f(y)$ and it has no other poles. Its residues are
\[
r_j
:=
\operatorname{res}\left(\frac{1}{f(y)}, \beta\omega^j\right)
=
\frac{1}{3\beta^2\omega^{2j}},
\quad j=0,1,2.
\]
For distinct $i,j\in\{0,1,2\}$, we have
\(
\frac{r_i}{r_j}
=
\omega^{2(j-i)}
\in\{\omega,\omega^2\}
\subseteq\mathbb U\setminus\mathbb Q.
\)
Thus no two residues are rational multiples of one another.
By Lemma~\ref{hrushovski2003-lm2.23}, $V$ is strictly
minimal over $k$.
Since \(
\sigma(a)\neq0\) and \(\delta(\sigma(a))=0,
\) the same argument applies to $V^\sigma$.

Applying Lemma~\ref{lemma-strictlyminimal} over $k^{\alg}$ to $V$ and $V^\sigma$, we have $V$ and $V^\sigma$ are non-orthogonal if and only if 
there exists $\varepsilon\in\{1,-1\}$ 
such that
\(
f^\sigma(\varepsilon y)
=
\varepsilon
f(y),
\)
equivalently,
\(
\sigma(a)=\varepsilon a.
\)
\end{proof}

\begin{theorem}
Let \(K\) be an algebraically closed
\(\sigma\)-\(\delta\)-field of characteristic zero, and let
\(a\in K^\delta\smallsetminus\{0\}\).
Let \(L\) be a \(\sigma\)-\(\delta\)-field extending \(K\),
and let \(\xi\) be a solution of
\(
\delta y=y^3+a
\)
that is transcendental over \(L\).
Then exactly one of the following holds:
\begin{enumerate}
    \item \(\xi\) is \(\sigma\)-transcendental over \(L\);
    \item there exist \(n\in\mathbb N_{>0}\) and
    \(\varepsilon\in\{1,-1\}\) such that
    \[
    \sigma^n(\xi)=\varepsilon\xi.
    \]
    In this case, necessarily
    \(\sigma^n(a)=\varepsilon a\).
\end{enumerate}
\end{theorem}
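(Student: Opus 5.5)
The plan is to reduce to Theorem~\ref{thm-strictmin} applied over the algebraic closure of \(L\), with \(f(y)=y^3+a\), and then read off the constraints on the affine map it produces. Set \(L':=L^{\operatorname{alg}}\), computed inside \(\mathbb{U}\). Then \(L'\) is an algebraically closed \(\sigma\)-\(\delta\)-field containing \(K\), and \(a\in C_{L'}\smallsetminus\{0\}\). Algebraic (in)dependence of \(\xi,\sigma(\xi),\ldots,\sigma^k(\xi)\) is the same over \(L\) and over \(L'\). Hence \(\xi\) is transcendental over \(L'\), and \(\xi\) is \(\sigma\)-transcendental over \(L\) if and only if it is so over \(L'\). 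So it suffices to prove the statement with \(L'\) in place of \(L\).

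Next I would check the hypotheses of Theorem~\ref{thm-strictmin} over \(L'\). The polynomial \(f(y)=y^3+a\) is monic of degree \(n=3\), its coefficients lie in \(C_{L'}\), and \(f_{n-1}=f_2=0\). By Lemma~\ref{cor-pgl3}(1), \(V=\Xi^*(\mathbb{P}^1,dy/f(y))\) is strictly minimal over the constant field \(\mathbb{Q}(a,\sigma(a))\subseteq L'\). By the convention stated before Lemma~\ref{lemma-strictlyminimal}, the generic type of \(V_f^*\) over \(L'\) is therefore strictly minimal, and it is totally disintegrated by Lemma~\ref{lm-strictminimal} and the remark following it. Since \(\xi\) solves \(\delta y=f(y)\) and is transcendental over \(L'\), Theorem~\ref{thm-strictmin} applies. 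It gives either that \(\xi\) is \(\sigma\)-transcendental over \(L'\), which is case (1), or that there are \(s\geq1\), \(c\in C_{L'}^\times\) and \(b\in C_{L'}\) satisfying the following:
\[
\sigma^s(\xi)=c\xi+b,\qquad c^{2}=1,\qquad b=\frac{c f_2-\sigma^s(f_2)}{3}=0,\qquad f^{\sigma^s}(y)=c\,f\!\left(\frac{y}{c}\right).
\]

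In the second alternative, set \(\varepsilon:=c\in\{1,-1\}\) and \(n:=s\). Then \(\sigma^n(\xi)=\varepsilon\xi\). Since \(\varepsilon^3=\varepsilon\) and \(\varepsilon^2=1\), the last identity becomes
\[
y^3+\sigma^n(a)=\varepsilon\bigl(\varepsilon^{-3}y^3+a\bigr)=y^3+\varepsilon a,
\]
so \(\sigma^n(a)=\varepsilon a\), as required. For exclusivity, note that if \(\sigma^n(\xi)=\varepsilon\xi\), then \(\xi\) and \(\sigma^n(\xi)\) are algebraically dependent over \(L\), so \(\xi\) is not \(\sigma\)-transcendental over \(L\). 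Hence exactly one of (1) and (2) holds.

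The main obstacle is making sure Theorem~\ref{thm-strictmin} really applies over the possibly large field \(L'\) rather than over \(K\). This is where the convention that strict minimality need only hold over \emph{some} subfield of definition is used. It is also where one must note that total disintegration of the generic type passes from \(\mathbb{Q}(a,\sigma(a))^{\operatorname{alg}}\) up to \(L'\), via Corollary~\ref{cor:trivial-field-of-definition-of-dependence} as in the remark after Lemma~\ref{lm-strictminimal}. I would also double-check that \(\sigma\) preserves \(L'\): it does, because \(L'\) is the algebraic closure of a \(\sigma\)-invariant field inside \(\mathbb{U}\).
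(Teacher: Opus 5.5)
Your argument is correct, but it takes a different route from the paper's proof of this theorem. It is instead the route the paper uses for the subsequent Corollary~\ref{cor:irreducible-cubic-difference}, where Theorem~\ref{thm-strictmin} is applied directly over $M=L^{\mathrm{alg}}$.

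For this theorem the paper does not apply Theorem~\ref{thm-strictmin} over the large field. Instead it descends to $K$ first:
\begin{enumerate}
\item Minimality of the generic type $p_0$ gives $\xi\ind^\delta_K M$.
\item Theorem~\ref{thm-trivialminimal} upgrades this to $\xi\ind^{\sigma\text{-}\delta}_K M$.
\item Proposition~\ref{prop-nonforking-ideal} then gives $I_{\sigma\text{-}\delta}(\xi/M)=I_{\sigma\text{-}\delta}(\xi/K)$, so $\sigma$-algebraicity over $L$ implies $\sigma$-algebraicity over $K$.
\end{enumerate}
Only then does it apply Theorem~\ref{thm-strictmin} over $K$. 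That yields $\varepsilon$ and $b\in K^\delta$, and comparing coefficients gives $b=0$ and $\sigma^n(a)=\varepsilon a$, exactly as in your final step.

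The paper's route is chosen to illustrate the main preservation theorem. It only needs strict minimality and total disintegration over the fixed base $K$. It also displays the general phenomenon that, for a trivial minimal type, any $\sigma$-relation over an arbitrary extension is already visible over $K$.

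Your route bypasses Theorem~\ref{thm-trivialminimal} entirely. In exchange it relies on two things, both of which you correctly flag and both of which the paper itself uses in the corollary:
\begin{enumerate}
\item the base-insensitive convention for when a generic type is called ``strictly minimal'';
\item the transfer of total disintegration to the larger base $L'$ via Corollary~\ref{cor:trivial-field-of-definition-of-dependence}.
\end{enumerate}
Since the output forces $c=\pm1$ and $b=0$, the relation you obtain is defined over $\mathbb{Q}$. So nothing is lost, and descent to $K$ comes for free as a byproduct. You also verify that the two alternatives are mutually exclusive, which the paper leaves implicit.
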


\begin{proof}
Set \(M=L^{\mathrm{alg}}\subset\mathbb U\). Then \(M\) is an algebraically closed \(\sigma\)-\(\delta\)-field, and \(\xi\) remains transcendental over \(M\). For each \(i\geq 0\), let
\[
W_i=\mathbb V\bigl(\delta y-y^3-\sigma^i(a)\bigr),
\]
and let \(p_i\in S_{\delta}(K)\) be its generic type over \(K\).
By Lemma~\ref{cor-pgl3}, each \(p_i\) is strictly minimal relative to $\operatorname{DCF}_0$. Hence, by
Lemma~\ref{lm-strictminimal} and the remark following it, each \(p_i\) is totally disintegrated.

Since \(\xi\) is transcendental over \(M\),
\(\operatorname{tp}_\delta(\xi/M)\) is a  nonforking extension of $p_0$ over $M$. Minimality of \(p_0\) therefore gives
\(\xi\ind_K^\delta M.
\)
Applying Theorem~\ref{thm-trivialminimal} over  \(K=K^{\alg}\subseteq M\), we obtain
\(
\xi\ind_K^{\sigma\text{-}\delta}M.
\)
By Proposition~\ref{prop-nonforking-ideal},
\(I_{\sigma\text{-}\delta}(\xi/M)
= I_{\sigma\text{-}\delta}(\xi/K).
\)
In particular, for every \(r\geq 0\), the entries of \(
\bigl(\xi,\sigma(\xi),\ldots,\sigma^r(\xi)\bigr)\)
are algebraically independent over \(M\) if and only if they are algebraically independent over \(K\).
Thus \(\xi\) is \(\sigma\)-transcendental over \(M\) if and
only if it is \(\sigma\)-transcendental over \(K\).

Suppose that \(\xi\) is not \(\sigma\)-transcendental over
\(L\). Then it is \(\sigma\)-algebraic over \(M\), and hence, is \(\sigma\)-algebraic over \(K\).
Apply Theorem~\ref{thm-strictmin} to \(f(y)=y^3+a\).
There exist \(n\geq 1\), \(\varepsilon\in\{1,-1\}\), and
\(b\in K^\delta\) such that
\[
\sigma^n(\xi)=\varepsilon\xi+b
\]
and
\[
y^3+\sigma^n(a)
=
\varepsilon f\!\left(\frac{y-b}{\varepsilon}\right)
=
(y-b)^3+\varepsilon a.
\]
Comparing coefficients of \(y^2\) and the constant terms on both sides  gives \(b=0\) and
\(\sigma^n(a)=\varepsilon a\).
Consequently, 
\(
\sigma^n(\xi)=\varepsilon\xi,
\)
as required.
\end{proof}

More generally, we have the following result:

\begin{cor}\label{cor:irreducible-cubic-difference}
Let $K$ be an algebraically closed $\sigma$-$\delta$-field of
characteristic zero, and let
\[
f(y)=y^3+a_2y^2+a_1y+a_0\in C_K[y]
\]
be irreducible over $\mathbb Q(a_0,a_1,a_2)$.
Let $L\supseteq K$ be any $\sigma$-$\delta$-field  and let $\xi$
be a solution of $\delta y=f(y)$ with $\xi\notin L^{\mathrm{alg}}$.
Then either $\xi$ is $\sigma$-transcendental over $L$, or there exist
$s\geq 1$, $\varepsilon\in\{1,-1\}$, and $b=\frac{\varepsilon a_2-\sigma^s(a_2)}{3} \in C_K$ such that
\[
\sigma^s(\xi)=\varepsilon\xi+b.
\]
In the second case, necessarily,
$f^{\sigma^s}(y)=\varepsilon f\bigl(\varepsilon(y-b)\bigr)$.
\end{cor}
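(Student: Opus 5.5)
We follow the proof of the preceding theorem for $y^3+a$, replacing Lemma~\ref{cor-pgl3} by Lemma~\ref{residue-cubic}. Set $M:=L^{\alg}$, an algebraically closed $\sigma$-$\delta$-field containing $K$, over which $\xi$ is transcendental. For $i\in\mathbb Z$ let $p_i\in S_\delta(K)$ be the generic type of $\delta y=f^{\sigma^i}(y)$. Since $\sigma$ is a field automorphism fixing $\mathbb Q$, $f^{\sigma^i}$ is irreducible over $\mathbb Q(\sigma^i(a_0),\sigma^i(a_1),\sigma^i(a_2))$: indeed $\sigma^i$ carries the coefficient field of $f$ isomorphically onto that of $f^{\sigma^i}$. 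Hence Lemma~\ref{residue-cubic} shows that each $p_i$ is strictly minimal, so by Lemma~\ref{lm-strictminimal} and the subsequent remark each $p_i$ is totally disintegrated, in particular trivial and minimal.

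\textbf{Reduction from $L$ to $K$.} Because $\xi\notin M$, $\operatorname{tp}_\delta(\xi/M)$ is the nonforking extension of $p_0$, so $\xi\ind^\delta_K M$. By Theorem~\ref{thm-trivialminimal}, $\xi\ind^{\sigma\text{-}\delta}_K M$. By Proposition~\ref{prop-nonforking-ideal}, $I_{\sigma\text{-}\delta}(\xi/M)=I_{\sigma\text{-}\delta}(\xi/K)$. Consequently $\xi$ is $\sigma$-transcendental over $L$ (equivalently, over $M$) if and only if it is $\sigma$-transcendental over $K$.

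\textbf{The $\sigma$-algebraic case.} Suppose $\xi$ is $\sigma$-algebraic over $K$. Theorem~\ref{thm-strictmin} applies because $f$ is monic of degree $3$ with constant coefficients and strictly minimal generic type. It yields $s\geq1$, $a\in C_K^\times$ with $a^2=1$ (so $a=\varepsilon\in\{\pm1\}$), and $b=\frac{\varepsilon a_2-\sigma^s(a_2)}{3}\in C_K$, such that
\[
\sigma^s(\xi)=\varepsilon\xi+b,\qquad f^{\sigma^s}(y)=\varepsilon f\bigl((y-b)/\varepsilon\bigr)=\varepsilon f\bigl(\varepsilon(y-b)\bigr).
\]
The last equality holds because $1/\varepsilon=\varepsilon$. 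The formula for $b$ is read off from Theorem~\ref{thm-strictmin} with $f_{n-1}=a_2$ and $n=3$.

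\textbf{Expected obstacle.} The only delicate point is the hypothesis bookkeeping: Theorem~\ref{thm-strictmin} and the reduction step need strict minimality of the generic types of all the twisted equations $f^{\sigma^i}$, not just of $f$. This is why we check that irreducibility over the coefficient field transfers under $\sigma^i$. Lemma~\ref{residue-cubic} itself only needs $f^{\sigma^i}\in C_K[y]$, which holds because $\sigma$ commutes with $\delta$ and preserves $K$.
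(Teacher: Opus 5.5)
Your proof is correct, but it handles the passage from $L$ to $K$ differently from the paper.

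\textbf{Your route.} You descend first. Using Theorem~\ref{thm-trivialminimal} and Proposition~\ref{prop-nonforking-ideal}, you show that $\sigma$-algebraicity over $L$ forces $\sigma$-algebraicity over $K$. You then apply Theorem~\ref{thm-strictmin} over $K$, which gives $b\in C_K$ directly. This mirrors the proof of the preceding theorem for $y^3+a$.

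\textbf{The paper's route.} The paper never invokes Theorem~\ref{thm-trivialminimal} here. It applies Theorem~\ref{thm-strictmin} directly over $M=L^{\alg}$. This is legitimate because strict minimality of $V_f^*$ is witnessed over a subfield of $K\subseteq M$, so the generic type over $M$ still qualifies. That yields $b\in C_M$. Comparing $y^2$-coefficients, $\sigma^s(a_2)=\varepsilon a_2-3b$, then shows $b\in C_K$.

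\textbf{Comparison.} The paper's argument is more economical: it uses only the explicit form produced by Theorem~\ref{thm-strictmin}. Since the relation $\sigma^s(\xi)=\varepsilon\xi+b$ turns out to have coefficients in $K$, your descent is recovered as a byproduct. Your argument makes the role of the preservation theorem explicit and keeps the treatment uniform with the $y^3+a$ case.

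\textbf{A redundancy.} Your check that irreducibility passes to $f^{\sigma^i}$ is correct but not needed. Strict minimality of the twisted equations follows by transport of structure along the $\delta$-automorphism $\sigma^i$, as the proof of Theorem~\ref{thm-strictmin} already notes. Your reduction step needs only that $p_0$ is minimal and trivial. Theorem~\ref{thm-trivialminimal} handles the transforms internally via Lemma~\ref{lem:aut-preserves-minimality-and-triviality}.
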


\begin{proof}
Set $M=L^{\mathrm{alg}}\subset\mathbb U$. Then $M$ is an algebraically closed $\sigma$-$\delta$-field
containing $K$, and $\xi$ is transcendental over $M$.
By Lemma~\ref{residue-cubic}, the nonconstant solution locus
of $\delta y=f(y)$ has a strictly minimal generic type.
Its generic type over $M$ is a nonforking extension and is again
totally disintegrated by  Corollary~\ref{cor:trivial-field-of-definition-of-dependence}.

Suppose that $\xi$ is $\sigma$-algebraic over $L$.
It is then $\sigma$-algebraic over $M$, so
Theorem~\ref{thm-strictmin}, applied over $M$, yields
$s\geq 1$, $\varepsilon\in\{1,-1\}$, and $b\in C_M$ such that
\[
\sigma^s(\xi)=\varepsilon\xi+b,
\qquad
f^{\sigma^s}(y)
=\varepsilon f\bigl(\varepsilon(y-b)\bigr).
\]
Comparing coefficients of $y^2$ gives
\[
\sigma^s(a_2)=\varepsilon a_2-3b.
\]
Hence $b=(\varepsilon a_2-\sigma^s(a_2))/3\in C_K$. \end{proof}

\section{Counterpoint for non-trivial types}
\label{sec:non-trivial-types}

In this section we examine some examples of non-trivial 
minimal types relative to \(\DCF\) and consider their
minimality relative to \(\DCFA\).  We refer 
to Lascar rank, or what is often called 
\(\operatorname{SU}\)-rank for simple theories.
See \cite[Chapter 5]{Wagner-simple} for details.
For later use, we recall its characterization in $\DCFA$.

\begin{remark}
\label{rk:SU-rank-DCFA}
Fix a completion $T$ of $\DCFA$ and a monster model
$\mathbb U$ of $T$.
By \cite[Proposition~3.1(v)]{LS-DCFAm}, $T$ is
supersimple, so every complete type in finitely many
variables has ordinal-valued $\operatorname{SU}$-rank
\cite[Proposition~6.9(ii)]{KimPillay1997}.
In this section, $\operatorname{SU}(a/K)$ denotes the rank of the
complete first-order type $\operatorname{tp}(a/K)$
relative to $\DCFA$.

For a small $\sigma$-$\Delta$-subfield $K$ of $\mathbb U$
and a finite tuple $a$, write
\(
K\langle a\rangle_{\sigma\text{-}\Delta}
=
K\bigl\langle\sigma^j(a)
:\ j\in\mathbb Z
\bigr\rangle_{\Delta}.
\)
By \cite[Proposition~3.1(v)]{LS-DCFAm}, for every small
$\sigma$-$\Delta$-field $L$ extending $K$,
\[
a\ind_K^{\sigma\text{-}\Delta}L
\quad\Longleftrightarrow\quad
K\langle a\rangle_{\sigma\text{-}\Delta}
\text{ and }L
\text{ are algebraically disjoint over }K.
\]
Consequently, $\operatorname{SU}(a/K)\geq0$, and
\[
\operatorname{SU}(a/K)\geq\alpha+1
\]
if and only if there is a small
$\sigma$-$\Delta$-field $L\supseteq K$ such that
$\operatorname{SU}(a/L)\geq\alpha$ and
$K\langle a\rangle_{\sigma\text{-}\Delta}$ and $L$
are not algebraically disjoint over $K$.
For a nonzero limit ordinal $\lambda$,
$\operatorname{SU}(a/K)\geq\lambda$ if and only if
$\operatorname{SU}(a/K)\geq\alpha$ for every
$\alpha<\lambda$.
\end{remark}

We start with some general results.  To state the
first of these we need the definition of 
the \(\sigma\)-order.  For the 
remainder of this section, \(K \subseteq \mathbb{U}\)
is a small inversive \(\sigma\)-\(\Delta\)-subfield of 
\(\mathbb{U}\) and \(a \in \mathbb{U}^x\) is a 
tuple from \(\mathbb{U}\).

\begin{definition}
\label{def:sigma-degree}
We define the \(\sigma\)-order 
of \(a\) over \(K\), \(\operatorname{ord}_\sigma(a/K)\),
to be 
\begin{itemize}
    \item \(0\) if \(a\) is algebraic over \(K\),
    \item \(\ell \in \mathbb{Z}_+\) if \(\ell\) is 
    minimal with \(K \langle a \rangle_{\sigma\text{-}\Delta} 
\subseteq K \langle a, \ldots, \sigma^{\ell-1} a\rangle_\Delta^{\operatorname{alg}}\), and
\item \(\infty\) otherwise.
\end{itemize}
For a partial type \(\Sigma\) defined over 
\(K\) we define \(\ord_\sigma(\Sigma) := \sup \{ \ord_\sigma (a/K) : a \in \Sigma(\mathbb{U}) \} \). 
\end{definition}

\begin{lemma}
\label{lem:fork-minimal-finite}
If \(\operatorname{tp}_\Delta(a/K)\) is 
minimal and \(L \subseteq \mathbb{U}\) is 
a small inversive \(\sigma\)-\(\Delta\)-extension of \(K\)
with \(a \nind^{\sigma\text{-}\Delta}_K L\), then 
\(\ord_\sigma(a/L) < \infty\).
\end{lemma}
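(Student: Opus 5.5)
The plan is to reduce the $\sigma$-$\Delta$-dependence to a $\Delta$-dependence among finitely many transforms $\sigma^i(a)$ over $L$, and then use minimality of each $\operatorname{tp}_\Delta(\sigma^i(a)/K)$ (Lemma~\ref{lem:aut-preserves-minimality-and-triviality}) to force all later transforms to be algebraic over the earlier ones together with $L$. We may replace $L$ by $L^{\operatorname{alg}}$: this changes neither the dependence hypothesis nor $\ord_\sigma(a/L)$, since algebraic closure over $L$ and over $L^{\operatorname{alg}}$ coincide.

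\textbf{Step 1 (finite witness).} As in the proof of Theorem~\ref{thm-trivialminimal}, $a\nind^{\sigma\text{-}\Delta}_K L$ means $K\langle a\rangle_{\sigma\text{-}\Delta}$ and $L$ are algebraically dependent over $K$. A single polynomial relation witnesses this, so for some $N$ the tuple $(a,\sigma(a),\ldots,\sigma^N(a))$ is $\nind^\Delta_K L$. Here we shift by a power of $\sigma$, using $\sigma(K)=K$ and $\sigma(L)=L$.

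\textbf{Step 2 (some transform becomes algebraic).} Choose $k\le N$ least such that $(a,\ldots,\sigma^k(a))\nind^\Delta_K L$. If $k=0$, then $a\nind^\Delta_K L$, so by minimality $a\in L^{\operatorname{alg}}$ and $\ord_\sigma(a/L)=0$. Otherwise set $F:=K\langle a,\ldots,\sigma^{k-1}(a)\rangle_\Delta$, so that $F\ind^\Delta_K L$. By transitivity and symmetry, $\sigma^k(a)\nind^\Delta_{F} L$, hence $\sigma^k(a)\nind^\Delta_{K} \langle F\cup L\rangle_\Delta$. Since $\operatorname{tp}_\Delta(\sigma^k(a)/K)$ is minimal, $\sigma^k(a)$ is algebraic over $L\langle a,\ldots,\sigma^{k-1}(a)\rangle_\Delta$.

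\textbf{Step 3 (propagation).} Applying $\sigma^j$ for $j\ge 0$, and using $\sigma(L)=L$, we get
\[
\sigma^{k+j}(a)\in L\langle\sigma^j(a),\ldots,\sigma^{k+j-1}(a)\rangle_\Delta^{\operatorname{alg}}.
\]
Induction on $j$ then shows every $\sigma^{n}(a)$ with $n\ge0$ lies in $L\langle a,\ldots,\sigma^{k-1}(a)\rangle_\Delta^{\operatorname{alg}}$.

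For negative indices, first run Step 2 with indices reversed. Since the witnessing tuple can be shifted, apply $\sigma^{-N}$ to the tuple from Step 1 and choose the largest index rather than the least. Concretely, take $k'$ least with $(\sigma^{N-k'}(a),\ldots,\sigma^N(a))\nind^\Delta_K L$. The same argument gives $\sigma^{N-k'}(a)$ algebraic over $L\langle\sigma^{N-k'+1}(a),\ldots,\sigma^N(a)\rangle_\Delta$. Shifting downward, every $\sigma^{-n}(a)$ is algebraic over finitely many consecutive transforms, and these are in turn algebraic over a fixed finite window.

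Combining both directions, $L\langle a\rangle_{\sigma\text{-}\Delta}$ is contained in the algebraic closure of $L\langle\sigma^{-M}(a),\ldots,\sigma^{M'}(a)\rangle_\Delta$ for some finite window. Shifting again by $\sigma^{M}$ gives $\ord_\sigma(a/L)<\infty$.

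The main obstacle is exactly this two-sided issue, since the definition of $\ord_\sigma$ requires containment of the whole inversive closure. I expect the reversed-window argument to handle it. The cleanest implementation is to note that transcendence degree over $L$ of $L\langle\sigma^{i}(a):|i|\le n\rangle_\Delta$ is eventually bounded by that of a fixed window. Then the inversive field is algebraic over a finite window, which after a $\sigma$-shift has the required form $L\langle a,\ldots,\sigma^{\ell-1}(a)\rangle_\Delta^{\operatorname{alg}}$.
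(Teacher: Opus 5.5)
Your proposal is correct. Steps 1--2 and the forward propagation are the same as in the paper's proof; the difference lies in how you handle the negative transforms.

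The paper keeps the single window $a,\ldots,\sigma^{N-1}(a)$, with $N$ minimal, and uses an exchange argument. It first shows $\sigma^{N-1}(a)\notin L\langle a,\ldots,\sigma^{N-2}(a)\rangle_\Delta^{\operatorname{alg}}$: otherwise, after shifting, $K\langle a,\ldots,\sigma^{N}(a)\rangle_\Delta^{\operatorname{alg}}=K\langle a,\ldots,\sigma^{N-1}(a)\rangle_\Delta^{\operatorname{alg}}$ would be independent from $L$. Since $\sigma^{N-1}(a)$ is algebraic over $L\langle\sigma^{-1}(a),a,\ldots,\sigma^{N-2}(a)\rangle_\Delta$, symmetry of forking and minimality of $\operatorname{tp}_\Delta(\sigma^{-1}(a)/K)$ then give $\sigma^{-1}(a)\in L\langle a,\ldots,\sigma^{N-1}(a)\rangle_\Delta^{\operatorname{alg}}$.

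You instead rerun the minimal-witness argument from the right end of the window. After shifting with $\sigma(L)=L$, this yields the backward recursion $\sigma^{j}(a)\in L\langle\sigma^{j+1}(a),\ldots,\sigma^{j+k'}(a)\rangle_\Delta^{\operatorname{alg}}$ for all $j$. Downward induction then places every $\sigma^{-n}(a)$ in $L\langle a,\ldots,\sigma^{k'-1}(a)\rangle_\Delta^{\operatorname{alg}}$. This is more symmetric and avoids both the auxiliary non-algebraicity claim and the exchange step.

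The end of your proof can be tightened. No final shift by $\sigma^{M}$ and no transcendence-degree bookkeeping are needed. The two one-sided recursions already give $L\langle a\rangle_{\sigma\text{-}\Delta}\subseteq L\langle a,\ldots,\sigma^{\max(k,k')-1}(a)\rangle_\Delta^{\operatorname{alg}}$. If you take $N=k$ minimal in Step 1, then $k'\le k$. So you recover the paper's sharper bound $\ord_\sigma(a/L)\le N$ with the specific window $a,\ldots,\sigma^{N-1}(a)$, which is the form reused later in Proposition~\ref{prop:minimal-sigma-degree}. The transcendence-degree reformulation in your last paragraph would also require finiteness of $\operatorname{tr.deg}$ of $L\langle a\rangle_\Delta$ over $L$, a fact your main argument never uses.
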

\begin{proof}
By definition of \(\ind^{\sigma\text{-}\Delta}\), we have that 
\( (\sigma^j(a))_{j \in \mathbb{Z}} \nind^\Delta_K L\).  
By finite character of dependence, there are \(M,N \in \mathbb{N}\)
so that \((\sigma^j(a))_{-M \leq j \leq N} \nind_K^\Delta L\).  Applying
\(\sigma^M\) and using the fact that \(K\) and \(L\) are 
inversive, we may assume that \(M = 0\).  Minimize  \(N\). If \(N = 0\), then \(a \in L^{\operatorname{alg}}\) which implies because \(L\) is inversive that \(\sigma^i(a)\) is algebraic over \(L\) for every \(i \in \mathbb{Z}\). That is, \(\ord_\sigma(a/L) = 0\).  Consider now the case that \(N > 0\).
We have that \(a, \ldots, \sigma^{N-1}(a) \ind^\Delta_K L\)  so 
that by transitivity \(\sigma^N(a) \nind^\Delta_{K \langle a, \ldots, \sigma^{N-1}(a) \rangle_\Delta} L \langle a, \ldots, \sigma^{N-1}(a) \rangle_\Delta \).  Since \(\operatorname{tp}_\Delta(a/K)\) 
is minimal, so is \(\operatorname{tp}_\Delta(\sigma^N(a)/K)\).
Hence, this dependence gives that 
\(\sigma^N(a)\) is algebraic over 
\(L \langle a, \ldots, \sigma^{N-1}(a) \rangle_\Delta\).  
Iterating \(\sigma\), we see that \[L \langle \{ \sigma^j(a) 
: j \in \mathbb{N} \} \rangle_\Delta 
\subseteq L \langle a, \ldots, \sigma^{N-1}(a) \rangle_\Delta^{\operatorname{alg}}.\]

 Applying $\sigma^{-1}$ to the above gives $\sigma^{N-1}(a)\in L\langle\sigma^{-1}(a),a,\ldots,\sigma^{N-2}(a)\rangle_{\Delta}^{\alg}$. We claim that $\sigma^{N-1}(a)\notin L\langle a,\ldots,\sigma^{N-2}(a)\rangle_{\Delta}^{\alg}$.
 Otherwise, since $a,\ldots,\sigma^{N-1}(a)\ind^\Delta_K L$, we would get $\sigma^{N-1}(a)\in K\langle a,\ldots,\sigma^{N-2}(a)\rangle_{\Delta}^{\alg}$.  Applying $\sigma$, we get  $\sigma^{N}(a)\in K\langle \sigma(a),\ldots,\sigma^{N-1}(a)\rangle_{\Delta}^{\alg}$, whence $ K\langle a,\ldots,\sigma^N(a)\rangle_{\Delta}^{\alg}=K\langle a,\ldots,\sigma^{N-1}(a)\rangle_{\Delta}^{\alg}$ would be independent from $L$ over $K$, a contradiction to $a,\ldots,\sigma^{N}(a)\nind^\Delta_K L$. This proves the claim.
 Thus, we have $\sigma^{N-1}(a)\nind^\Delta_{L\langle a,\ldots,\sigma^{N-2}(a)\rangle_\Delta} \sigma^{-1}(a)$.
 By symmetry and minimality of $\operatorname{tp}_\Delta(\sigma^{-1}(a)/K)$, we obtain 
 $\sigma^{-1}(a)\in L\langle a,\ldots,\sigma^{N-1}(a)\rangle_{\Delta}^{\alg}$.
Iterating \(\sigma^{-1}\),  we see that \(L \langle a \rangle_{\sigma\text{-}\Delta} = L \langle \{ \sigma^j(a) 
: j \in \mathbb{Z} \} \rangle_\Delta 
\subseteq L \langle a, \ldots, \sigma^{N-1}(a) \rangle_\Delta^{\operatorname{alg}}\). 
That is, \(\ord_\sigma(a/L) \leq N\).  
\end{proof}

\begin{prop}
\label{prop:minimal-sigma-degree}
If \(\operatorname{tp}_\Delta(a/K)\) is minimal, 
then \(\operatorname{SU}(a/K) \leq \ord_\sigma(a/K)\).
\end{prop}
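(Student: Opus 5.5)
We argue by induction on \(n := \ord_\sigma(a/K)\), proving for every \(n \in \mathbb{N}\) that whenever \(\operatorname{tp}_\Delta(a/K)\) is minimal and \(\ord_\sigma(a/K) \leq n\), we have \(\operatorname{SU}(a/K) \leq n\). When \(\ord_\sigma(a/K) = \infty\) there is nothing to prove. We may enlarge \(K\) to an algebraically closed inversive field whenever convenient; algebraic disjointness and the rank are insensitive to this. The base case \(n = 0\) says that \(a\) is algebraic over \(K\), so no extension \(L\) can witness dependence, and Remark~\ref{rk:SU-rank-DCFA} gives \(\operatorname{SU}(a/K) = 0\).

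For the inductive step, suppose \(\ord_\sigma(a/K) = n \geq 1\) and, towards a contradiction, \(\operatorname{SU}(a/K) \geq n+1\). By Remark~\ref{rk:SU-rank-DCFA} there is a small \(\sigma\)-\(\Delta\)-field \(L \supseteq K\) with \(a \nind^{\sigma\text{-}\Delta}_K L\) and \(\operatorname{SU}(a/L) \geq n\). We may take \(L\) inversive and algebraically closed. Lemma~\ref{lem:fork-minimal-finite} already shows \(\ord_\sigma(a/L) < \infty\). The key claim is \(\ord_\sigma(a/L) < n\), i.e.\ \(\ord_\sigma(a/L) \leq n-1\), after which we need to apply the inductive hypothesis to \(\operatorname{tp}(a/L)\).

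To get the strict drop in order, we compare transcendence degrees. Since \(K\langle a\rangle_{\sigma\text{-}\Delta}\) is algebraic over \(F := K\langle a,\ldots,\sigma^{n-1}(a)\rangle_\Delta\), the dependence \(a \nind^{\sigma\text{-}\Delta}_K L\) means that \(F\) and \(L\) are algebraically dependent over \(K\), i.e.\ \((a,\ldots,\sigma^{n-1}(a)) \nind^\Delta_K L\). Because \(\ord_\sigma(a/K) = n\), the tuple \(a,\ldots,\sigma^{n-1}(a)\) is \(\ind^\Delta\)-independent over \(K\): otherwise some \(\sigma^j(a)\) with \(j<n\) would be algebraic over the earlier ones, and iterating \(\sigma^{\pm1}\) as in the proof of Lemma~\ref{lem:fork-minimal-finite} would give \(\ord_\sigma(a/K) < n\). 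Each \(\sigma^j(a)\) realizes a minimal type by Lemma~\ref{lem:aut-preserves-minimality-and-triviality}. A forking chain then yields an index \(j<n\) such that \(\sigma^j(a)\) is algebraic over \(L\langle a,\ldots,\sigma^{j-1}(a)\rangle_\Delta\). Hence \(L\langle a,\ldots,\sigma^{n-1}(a)\rangle_\Delta\) is algebraic over the field generated over \(L\) by only \(n-1\) of these transforms, each of which has a minimal \(\Delta\)-type over \(L\) or is algebraic over it.

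The main obstacle is turning this into \(\ord_\sigma(a/L) \leq n-1\), because \(\ord_\sigma\) is defined via an initial segment \(a,\ldots,\sigma^{\ell-1}(a)\), not an arbitrary subset. The first attempt is to take the least \(j\) as above and reuse the shifting argument of Lemma~\ref{lem:fork-minimal-finite}: apply \(\sigma^{-j}\) and use inversivity of \(L\) to get \(a \in L\langle \sigma^{-j}(a),\ldots,\sigma^{-1}(a)\rangle_\Delta^{\alg}\). Then, exchanging via minimality and symmetry, we aim to show that \(L\langle a\rangle_{\sigma\text{-}\Delta}\) is algebraic over the field generated by \(n-1\) consecutive transforms. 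If the combinatorics fail, the fallback is to prove a more robust statement by the same induction, namely \(\operatorname{SU}(a/K) \leq \operatorname{tr.deg}\)-type dimension, counting the number of \(\Delta\)-generic minimal transforms needed. This quantity visibly drops by at least one under forking, which also finishes the induction.

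Once \(\ord_\sigma(a/L) \leq n-1\) is established, the type \(\operatorname{tp}_\Delta(a/L)\) is either algebraic (order \(0\)) or still minimal, since it is a nonalgebraic extension of a minimal type. The induction hypothesis then gives \(\operatorname{SU}(a/L) \leq n-1\), contradicting \(\operatorname{SU}(a/L) \geq n\).
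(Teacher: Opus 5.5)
Your plan is the paper's own: induct on the $\sigma$-order, take a forking extension $L$ with $\operatorname{SU}(a/L)\geq n$, pick the least $j$ with $(a,\ldots,\sigma^j(a))\nind^\Delta_K L$, and show $\ord_\sigma(a/L)\leq j\leq n-1$. But you leave this last step as an aim (``we aim to show'', ``if the combinatorics fail''). As written, the argument is therefore incomplete at its only substantive point.

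The obstacle you describe comes from discarding information. The least $j$ gives $\sigma^j(a)\in L\langle a,\ldots,\sigma^{j-1}(a)\rangle_\Delta^{\alg}$, which is already an initial-segment relation, and you weakened it to ``algebraic over some $n-1$ transforms''. From the initial-segment form, the argument of Lemma~\ref{lem:fork-minimal-finite} closes the step:
\begin{itemize}
\item If $j=0$, then $a\in L^{\alg}$ and $\ord_\sigma(a/L)=0$.
\item If $j\geq 1$, applying nonnegative powers of $\sigma$ (with $L$ inversive) puts every $\sigma^i(a)$, $i\geq 0$, into $F:=L\langle a,\ldots,\sigma^{j-1}(a)\rangle_\Delta^{\alg}$.
\item For negative indices you need an exchange step that your sketch does not supply. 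Applying $\sigma^{-1}$ gives $\sigma^{j-1}(a)\in L\langle\sigma^{-1}(a),a,\ldots,\sigma^{j-2}(a)\rangle_\Delta^{\alg}$. On the other hand, $\sigma^{j-1}(a)\notin L\langle a,\ldots,\sigma^{j-2}(a)\rangle_\Delta^{\alg}$. This holds because $(a,\ldots,\sigma^{j-1}(a))\ind^\Delta_K L$ and, by your own observation that $a,\ldots,\sigma^{n-1}(a)$ is $K$-independent, $\sigma^{j-1}(a)\notin K\langle a,\ldots,\sigma^{j-2}(a)\rangle_\Delta^{\alg}$.
\item Symmetry of $\ind^\Delta$ and minimality of $\operatorname{tp}_\Delta(\sigma^{-1}(a)/K)$ then give $\sigma^{-1}(a)\in F$. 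Iterating $\sigma^{-1}$ yields $L\langle a\rangle_{\sigma\text{-}\Delta}\subseteq F$, i.e.\ $\ord_\sigma(a/L)\leq j\leq n-1$, after which your final paragraph applies.
\end{itemize}
Your rewriting $a\in L\langle\sigma^{-j}(a),\ldots,\sigma^{-1}(a)\rangle_\Delta^{\alg}$ is just a $\sigma^{-j}$-shift of the same relation and does not avoid this exchange.

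Your fallback is in fact a valid, slightly different route, but only once it is made precise. Each $\sigma^i(a)$ has a minimal $\Delta$-type over $K$ (Lemma~\ref{lem:aut-preserves-minimality-and-triviality}), and over $L$ each is either algebraic or has a minimal type. So the operator $X\mapsto\acl^\Delta(K\cup X)$, and likewise over $L$, satisfies exchange on the orbit $\{\sigma^i(a):i\in\mathbb{Z}\}$. Hence the dimension $d(a/K)$ of the orbit is well defined, and trivially $d(a/K)\leq\ord_\sigma(a/K)$. Your least-index argument, applied to a $K$-basis $B$ of the orbit, shows that $B$ becomes dependent over $L$. Thus $d(a/L)\leq d(a/K)-1$ whenever $a\nind^{\sigma\text{-}\Delta}_K L$, and induction on $d$ gives $\operatorname{SU}(a/K)\leq d(a/K)\leq\ord_\sigma(a/K)$. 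This route never needs to show that the orbit is algebraic over an initial segment over $L$. However, your phrase ``tr.deg-type dimension \ldots visibly drops'' does none of the required work: you would have to define $d$, verify exchange, and run the induction on $d$ rather than on $\ord_\sigma$.
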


Note that in Proposition~\ref{prop:minimal-sigma-degree}, if 
\(\ord_\sigma(a/K) = \infty\), then the inequality 
\(\operatorname{SU}(a/K) \leq \ord_\sigma(a/K)\) holds 
trivially:  by convention \(\infty\) is greater than 
every ordinal.

\begin{proof}
Taking into account the observation made just before this proof, 
it suffices to show that for natural numbers \(N\)
if \(\ord_\sigma(a/K) \leq N\), then 
\(\operatorname{SU}(a/K) \leq N\). We do this 
by induction on \(N\).  The case of \(N = 0\)
is trivial as \(\ord_\sigma(a/K) \leq 0\) implies by 
definition that \(a\) is model-theoretically algebraic over
\(K\) relative to \(\DCFA\) while the hypothesis on \(K\) 
that it is an inversive \(\sigma\)-\(\Delta\)-field
implies that this closure is the ordinary algebraic 
closure which would then imply that even relative to 
\(\DCF\) the type \(\operatorname{tp}_\Delta(a/K)\) is 
algebraic, and, hence, not minimal.
For \(N+1\), if \(\operatorname{SU}(a/K) \geq N+2\),
then by definition of the Lascar rank we can find a 
small inversive \(\sigma\)-\(\Delta\)-extension 
\(L \subseteq \mathbb{U}\) of \(K\) so that 
\(a \nind_K^{\sigma\text{-}\Delta} L\) and 
\(\operatorname{SU}(a/L) \geq N+1\).  By hypothesis, 
\(\ord_\sigma(a/K) \leq N+1\).  
That is \(K \langle a \rangle_{\sigma\text{-}\Delta} 
\subseteq K \langle a, \ldots, \sigma^N(a) \rangle_\Delta^{\operatorname{alg}}\). 
From the definition of \(\ind^{\sigma\text{-}\Delta}\), we would have 
\((\sigma^j(a))_{j\in \mathbb{Z}} \nind_K^\Delta L\).  
As the full sequence is algebraic over \((\sigma^j(a))_{j=0}^{N}\), 
we would have \(a, \ldots, \sigma^{N} a \nind_K^\Delta L\).  
Taking \(M \leq N\) minimal with \(a, \ldots, \sigma^M a \nind_K^\Delta L\), we would have by 
transitivity that \(\sigma^M a \nind_{K \langle a, \ldots, 
\sigma^{M-1}(a) \rangle_\Delta} L \langle a, \ldots, 
\sigma^{M-1}(a) \rangle_\Delta \).  As 
\(\operatorname{tp}_\Delta(a/K)\), and hence also \(\operatorname{tp}_\Delta(\sigma^M(a)/K)\), 
is minimal, this dependence yields that \(\sigma^M(a)\) is 
algebraic over \(L \langle a, \ldots, \sigma^{M-1}(a) \rangle_\Delta\).  Iterating \(\sigma\) and \(\sigma^{-1}\) as in the proof of Lemma \ref{lem:fork-minimal-finite}, this 
yields that \(L \langle a \rangle_{\sigma\text{-}\Delta} 
\subseteq L \langle a, \ldots, \sigma^{M-1}(a) \rangle_\Delta^{\operatorname{alg}} \subseteq  L \langle a, \ldots, \sigma^{N-1}(a) \rangle_\Delta^{\operatorname{alg}}\).  
That is, \(\ord_\sigma(a/L) \leq N\).  Hence, 
by induction, \(\operatorname{SU}(a/L) \leq N\), 
contrary to our hypothesis.  With this we conclude.
\end{proof}

\begin{cor}
\label{cor:at-most-oemga}
If \(\operatorname{tp}_\Delta(a/K)\) is minimal, then 
\(\operatorname{SU}(a/K) \leq \omega\).
\end{cor}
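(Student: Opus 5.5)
We prove that if \(\operatorname{tp}_\Delta(a/K)\) is minimal then \(\operatorname{SU}(a/K)\le\omega\). The plan is to combine Lemma~\ref{lem:fork-minimal-finite} with Proposition~\ref{prop:minimal-sigma-degree}, using the successor clause of Remark~\ref{rk:SU-rank-DCFA}. Suppose towards a contradiction that \(\operatorname{SU}(a/K)\ge\omega+1\). By Remark~\ref{rk:SU-rank-DCFA} there is then a small \(\sigma\)-\(\Delta\)-field \(L\supseteq K\) with \(a\nind^{\sigma\text{-}\Delta}_K L\) and \(\operatorname{SU}(a/L)\ge\omega\). We may take \(L\) inversive, since replacing \(L\) by the inversive closure inside \(\mathbb U\) preserves the dependence. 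It can only lower \(\operatorname{SU}(a/L)\) if the enlargement forks, so the cleanest route is to assume \(L\) is a \(\sigma\)-\(\Delta\)-field as in the definition of rank. Either way, we then have a small inversive extension witnessing forking.

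Lemma~\ref{lem:fork-minimal-finite} now applies, because \(\operatorname{tp}_\Delta(a/K)\) is minimal and \(a\nind^{\sigma\text{-}\Delta}_K L\). It gives \(N:=\ord_\sigma(a/L)<\infty\).

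To bound \(\operatorname{SU}(a/L)\) we want to apply Proposition~\ref{prop:minimal-sigma-degree} over \(L\), and this needs \(\operatorname{tp}_\Delta(a/L)\) to be minimal or \(a\) to be algebraic over \(L\). If \(a\in L^{\alg}\), then \(\operatorname{SU}(a/L)=0\), which is a contradiction. Otherwise \(a\) is not algebraic over \(L\), so by minimality of \(\operatorname{tp}_\Delta(a/K)\) the type \(\operatorname{tp}_\Delta(a/L)\) is the unique non-algebraic extension of \(\operatorname{tp}_\Delta(a/K)\) to \(L\). Minimality then passes to it directly from the definition: any further extension \(L'\supseteq L\) is also an extension of \(K\), and uniqueness of the non-algebraic extension over \(L'\) is inherited. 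Hence Proposition~\ref{prop:minimal-sigma-degree} gives \(\operatorname{SU}(a/L)\le N<\omega\), contradicting \(\operatorname{SU}(a/L)\ge\omega\).

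The only delicate point is checking that minimality of the \(\Delta\)-type transfers from \(K\) to the larger base \(L\) in the non-algebraic case, together with the inversivity bookkeeping for \(L\). Both are routine from the definitions, so no real obstacle is expected.
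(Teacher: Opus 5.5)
Your proof is correct and is the paper's argument: assume \(\operatorname{SU}(a/K)>\omega\), take a forking extension \(L\) with \(\operatorname{SU}(a/L)\geq\omega\), obtain \(\ord_\sigma(a/L)<\infty\) from Lemma~\ref{lem:fork-minimal-finite}, and contradict this with Proposition~\ref{prop:minimal-sigma-degree} over \(L\). Your explicit check that \(\operatorname{tp}_\Delta(a/L)\) is again minimal unless \(a\in L^{\alg}\) supplies a step the paper leaves implicit. The inversivity detour is unnecessary, since \(\sigma\)-\(\Delta\)-fields are inversive by Convention~\ref{conv:inversive-difference}; in any case the inversive closure of \(L\) lies in \(\operatorname{dcl}(L)\) and would not change \(\operatorname{SU}(a/L)\).
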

\begin{proof}
Suppose to the contrary that \(\operatorname{SU}(a/K) > \omega\).
Then by definition of the Lascar rank
we can find a small inversive \(\sigma\)-\(\Delta\)-extension 
\(L \subseteq \mathbb{U}\) of \(K\) so that 
\(a \nind^{\sigma\text{-}\Delta}_K L\) and \(\operatorname{SU}(a/L) 
\geq \omega\).  By Lemma~\ref{lem:fork-minimal-finite}, 
\(\ord_\sigma(a/L) < \infty\).  But then 
Proposition~\ref{prop:minimal-sigma-degree} yields
\(\operatorname{SU}(a/L) \leq \ord_\sigma(a/L) < \omega\), 
contrary to our hypothesis.
\end{proof}

\begin{prop}
\label{prop:groups-as-infinite-rank}
Let \(G\) be a definable, connected, minimal group relative 
to \(\DCF\).  Suppose that for some \(\ell \in \mathbb{Z}_+\)
we have a definable surjective homomorphism \(f:G \to G^{\sigma^\ell}\).  Then relative to \(\DCFA\) we have
\(\operatorname{SU}(G) \geq \omega\).
\end{prop}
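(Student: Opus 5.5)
Throughout, $G$ is defined over the small algebraically closed inversive $\sigma$-$\Delta$-field $K$ (and so is $f$). I will produce, for every $N$, a definable subgroup of $G$ relative to $\DCFA$ whose generic type has $\operatorname{SU}$-rank at least $N$. The natural candidates are
\[
H_N:=\{g\in G : \sigma^{\ell N}(g)=f^{\diamond^{\sigma^\ell}N-1}(g)\},
\]
where $f^{\diamond^{\sigma^\ell}N-1}:G\to G^{\sigma^{\ell N}}$ is the twisted iterate from Definition~\ref{def:twisted-composition}. It is a surjective definable homomorphism because each $f^{\sigma^{\ell j}}:G^{\sigma^{\ell j}}\to G^{\sigma^{\ell(j+1)}}$ is one. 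Since both sides of the defining equation are homomorphisms into $G^{\sigma^{\ell N}}$, $H_N$ is a subgroup. The main claim is that a generic $g$ of $H_N$ has $g,\sigma(g),\ldots,\sigma^{\ell N-1}(g)$ independent generics of $G,G^\sigma,\ldots,G^{\sigma^{\ell N-1}}$ relative to $\DCF$. I will then show that this forces $\operatorname{SU}(g/K)\ge N$, and since $H_N\le G$ I conclude $\operatorname{SU}(G)\ge\omega$.

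\emph{Step 1 (existence of such points).} I build a $\sigma$-$\Delta$-field by hand. Take $b_0,\ldots,b_{\ell N-1}$ independent generics over $K$ of $G,G^\sigma,\ldots,G^{\sigma^{\ell N-1}}$ relative to $\DCF$. Define $b_{j+\ell N}$ for all $j\in\mathbb{Z}$ by the recursion $b_{j+\ell N}=F^{\sigma^{j}}(b_j)$, with $F:=f^{\diamond^{\sigma^\ell}N-1}$.

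Going backwards requires that $F^{\sigma^j}(b_j)$ determine $b_j$ up to algebraicity. This holds because $f$ is surjective between minimal connected groups of equal rank, so it has finite kernel, which lies in $\acl(K)$. Genericity then transfers: images of generics under finite-to-one surjective definable homomorphisms are generic.

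Set $\sigma(b_j)=b_{j+1}$, extending $\sigma$ on $K$. This is compatible because the type of $(b_{j+1},\ldots,b_{j+\ell N})$ over $K$ is the $\sigma$-transform of the type of $(b_j,\ldots,b_{j+\ell N-1})$, using stationarity of generic types of connected groups. It then extends to the algebraic closure. By the universality of $\mathbb U$ I embed the result over $K$, which gives $g:=b_0\in H_N$ with $\ord_\sigma(g/K)=\ell N$.

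\emph{Step 2 (rank lower bound).} I prove by induction on $k\le N$ that $\operatorname{SU}(g/K)\ge k$, using the forking characterization in Remark~\ref{rk:SU-rank-DCFA}. Let $L:=\acl^{\sigma\text{-}\Delta}(K,h)$, where $h\in G$ is chosen to realize the analogous configuration of level $N-1$ for $H_{N-1}$, i.e.\ satisfying $\sigma^{\ell (N-1)}(h)=f^{\diamond^{\sigma^\ell}N-2}(h)$. I choose $h$ related to $g$ by a group-theoretic relation, for instance $h=g\cdot c$ with $c$ a suitable correction, so that $g\nind_K^{\sigma\text{-}\Delta}L$ while over $L$ the element $g$ is still a generic of a coset of a group of type $H_{N-1}$-like, of $\sigma$-order $\ell(N-1)$.

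A more robust alternative, which I would try first, uses the quotient. Compose with the homomorphism $\pi:H_N\to G^{\sigma^{\ell}}\times\cdots$ given by $g\mapsto \sigma^{\ell}(g)\cdot f(g)^{-1}$. Its image lies in $G^{\sigma^\ell}$ and, by the twisted-iterate identity, satisfies the defining equation of $H_{N-1}^{\sigma^\ell}$. Taking $L=\acl(K,\pi(g))$ then gives forking, since $\pi(g)$ is not algebraic over $K$. It also gives that $\operatorname{tp}(g/L)$ is a generic of a coset of $\ker\pi=H_1$, so the rank arithmetic follows the Lascar inequalities for groups: $\operatorname{SU}(g/K)\ge \operatorname{SU}(\pi(g)/K)+\operatorname{SU}(g/K\pi(g))$ holds in supersimple theories for finite ranks.

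\emph{Main obstacle.} I expect the hardest step to be verifying that $\pi(g)$ really is a generic point of the level-$(N-1)$ group, i.e.\ that $\pi$ is surjective up to finite index and that its kernel is exactly the order-$\ell$ group $H_1$. This requires the commutation identity $f^{\sigma^{\ell}}\circ f^{\diamond^{\sigma^\ell}j}=f^{\diamond^{\sigma^\ell}j+1}$ and careful handling of the finite kernel of $f$. If this becomes messy, I would fall back on the direct inductive witness above: exhibit an $L$ over which $g$ has $\sigma$-order $\ell(N-1)$ but is still nonalgebraic and of the same shape, and invoke induction.
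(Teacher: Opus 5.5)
There is a genuine gap in Step 2, and it comes from your choice of subgroups. Put $\Phi:=\sigma^{-\ell}\circ f$. Since $G$ is commutative, $\Phi$ is a $\DCFA$-definable endomorphism of $G$, and $\Phi^N=\sigma^{-\ell N}\circ f^{\diamond^{\sigma^\ell}N-1}$. So your $H_N$ is $\ker(\mathrm{id}-\Phi^N)$, the group of $N$-periodic points of $\Phi$.

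You never justify that $\ord_\sigma(g/K)=\ell N$ forces $\operatorname{SU}(g/K)\ge N$; Proposition~\ref{prop:minimal-sigma-degree} gives only the opposite inequality. Your fallback breaks exactly at the step you flagged. We have $\pi(g)=\sigma^\ell\bigl((\mathrm{id}-\Phi)(g)\bigr)$, and $\mathrm{id}-\Phi$ commutes with $\Phi^N$, so $\pi$ maps $H_N$ into $H_N^{\sigma^\ell}$, not into $H_{N-1}^{\sigma^\ell}$. For instance, with $\ell=1$, $f=\mathrm{id}$ and $G=G^\sigma$, the membership $\sigma(g)-g\in H_{N-1}$ would say $2g=\sigma(g)+\sigma^{N-1}(g)$, which fails for $g$ of $\sigma$-order $N$.

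This is not just a missing verification: periodic-point groups can have rank below $N$. Take $G=E^\sharp$, with $E$ a non-isotrivial elliptic curve, $\operatorname{End}(E)=\mathbb{Z}$, defined over a $\sigma$-fixed field, and take $\ell=1$, $f=\mathrm{id}$. Then $3H_3\subseteq\ker(\sigma-1)+\ker(\sigma^2+\sigma+1)$, and each summand has $\operatorname{SU}$-rank at most $1$:
\begin{itemize}
\item the first because its elements have $\sigma$-order at most $1$;
\item the second because a forking extension to $L$ of a non-algebraic $g$ in it forces $\sigma(g)\in\acl^\Delta(L,g)$.
\end{itemize}
In the second case, one-basedness of $E^\sharp$ together with $\operatorname{End}(E)=\mathbb{Z}$ gives $m\sigma(g)=ng+c$ with $c\in L$ and $(m,n)\neq(0,0)$. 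Applying $\sigma$ and using $\sigma^2(g)=-g-\sigma(g)$ yields $(m^2+mn+n^2)g\in L$, which is absurd. Hence $\operatorname{SU}(H_3)\le 2<3$.

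The missing idea is to filter by iterated kernels of $\psi:=\mathrm{id}-\Phi$ rather than by kernels of $\mathrm{id}-\Phi^N$. The paper sets $H_N:=\ker(\psi^N)$; then $\psi$ (your $\pi$, up to $\sigma^\ell$) does send $H_{N+1}$ into $H_N$, with kernel $H_1$. You then need two facts: $H_1$ is infinite, and $\psi$ is surjective on $G$. The paper gets both from the geometric axioms for $(\mathbb{U},\Delta,\sigma^\ell)$, applied to the Kolchin locus of $(c,f(c)+\sigma^\ell(b))$ with $c$ generic in $G$ over a field containing $b$. Your Step 1 construction, run for $N=1$ with $x\mapsto f(x)+\sigma^\ell(b)$ in place of $f$, would also do. Surjectivity makes $\psi^N\colon H_{N+1}\to H_1$ onto with kernel $H_N$, so $H_{N+1}/H_N\cong H_1$ is infinite. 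The Lascar inequalities for definable groups then give $\operatorname{SU}(H_{N+1})\ge\operatorname{SU}(H_N)+1$, whence $\operatorname{SU}(G)\ge\omega$.
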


\begin{proof}
Since a minimal group is commutative~\cite{Reineke}, 
we use additive notation.
Define the $\DCFA$-definable endomorphism
\[
\psi:G\longrightarrow G,
\qquad
\psi(x):=x-\sigma^{-\ell}(f(x)).
\]
For each $N\geq0$, let $H_N:=\ker(\psi^N)$, where
$\psi^0=\operatorname{id}_G$ and $H_0=\{0\}$.
Thus,
\(
H_1
=\{x\in G:\sigma^\ell(x)=f(x)\}.
\)

Let us now show how to use the geometric axioms to 
see that \(\psi\) is surjective and that 
\(H_1\) is infinite.

By \cite[Proposition~3.4(v)]{LS-DCFAm},
$(\mathbb U,\Delta,\sigma^\ell)$ is a model of $\DCFA$.
 Fix $b\in G(\mathbb U)$ and an arbitrary finite set
$E\subseteq G(\mathbb U)$.
Choose a small algebraically closed
$\sigma^\ell$-$\Delta$-subfield $K\subseteq\mathbb U$
containing $b$, the coordinates of $E$,
and the parameters defining $G$ and $f$.
Since $G$ is connected, it has a unique generic
differential type $p$ over $K$.
By saturation, choose $c\in G(\mathbb U)$ realizing $p$,
and set $d:=f(c)+\sigma^\ell(b)$.
Then $d$ realizes the unique generic differential
type $p^{\sigma^\ell}$ of $G^{\sigma^\ell}$ over $K$.

Let $Z$ be the Kolchin locus of $c$ over $K$, and let
$W$ be the Kolchin locus of $(c,d)$ over $K$.
Since $d$ realizes $p^{\sigma^\ell}$, its Kolchin locus is
$Z^{\sigma^\ell}$. Hence
\(
W\subseteq Z\times Z^{\sigma^\ell},
\)
and both coordinate projections are $\Delta$-dominant.
Consider the $K$-definable set
\[
D:=\{(x,y)\in G\times G^{\sigma^\ell}:
      x\notin E,\ y=f(x)+\sigma^\ell(b)\}.
\]
Note that $(c,d)\in D$, since $c$ is generic
over $K$ and $E\subseteq G(K)$ is finite.
By quantifier elimination in $\DCF$, $D$ is
Kolchin-constructible. Because it contains the generic
point $(c,d)$ of $W$, there is a nonempty
$\Delta$-open subset $O\subseteq W$ contained in $D$.

Applying \cite[Fact~2.1]{LS-DCFAm} to
$W\subseteq Z\times Z^{\sigma^\ell}$ and its open subset $O$,
we obtain $x\in Z(\mathbb U)$ such that
\(
(x,\sigma^\ell(x))\in O\subseteq D.
\)
Therefore $x\in G(\mathbb U)\setminus E$ and
\(
\sigma^\ell(x)=f(x)+\sigma^\ell(b).
\)
Applying $\sigma^{-\ell}$ gives
\(
\psi(x)=x-\sigma^{-\ell}(f(x))=b.
\)

Taking $E=\varnothing$ and allowing arbitrary $b$
proves that $\psi$ is surjective.
Taking $b=0$ and allowing arbitrary finite $E$
shows that $H_1=\ker\psi$ is infinite.

For every $N\geq0$, the restriction
$\psi^N:H_{N+1}\longrightarrow H_1$
is a surjective homomorphism with kernel $H_N$.
Indeed, if $x\in H_{N+1}$, then
$\psi(\psi^N(x))=0$, so $\psi^N(x)\in H_1$.
Conversely, given $z\in H_1$, surjectivity of $\psi^N$
provides $x\in G$ with $\psi^N(x)=z$, and then
$\psi^{N+1}(x)=\psi(z)=0$.
The kernel of this restriction is precisely $H_N$.
Therefore,
\(
H_{N+1}/H_N\cong H_1.
\)
In particular, $[H_{N+1}:H_N]$ is infinite.

Thus $G$ contains an increasing chain of
$\DCFA$-definable subgroups
\(
\{0\}=H_0<H_1<H_2<\cdots,
\)
each of infinite index in the next. 
By the Lascar inequalities
\cite[Theorem~5.1.6(1)]{Wagner-simple},
\(
\operatorname{SU}(H_{N+1})
\geq
\operatorname{SU}(H_N)
+\operatorname{SU}(H_{N+1}/H_N)
\geq
\operatorname{SU}(H_N)+1.\)
It follows that $\operatorname{SU}(G)\geq\omega$.
\end{proof}

For a complete $\Delta$-type $p \in S_{x,\Delta}(K)$ over $K$, 
viewed as a partial
type in the language of $\DCFA$, write
\[
\operatorname{SU}(p)
:=
\sup\bigl\{
\operatorname{SU}(q):
q\in S_{x,\sigma\text{-}\Delta}(K),\ p\subseteq q
\bigr\}.
\]
Here, $S_{x,\sigma\text{-}\Delta}(K)$ denotes the space of complete
types over $K$ in the full $\sigma$-$\Delta$-language replacing our earlier convention to consider only quantifier-free types.

\begin{cor}
\label{cor:rank-omega-type}
Let $K$ be a small $\sigma$-$\Delta$-field.
Suppose that $p=\operatorname{tp}_\Delta(a/K)$
is minimal and is the generic type of a connected
$K$-definable group $G$ relative to $\DCF$.
Suppose further that there is a $\DCF$-definable
surjective homomorphism
\(
f:G\longrightarrow G^{\sigma^\ell}
\)
for some $\ell\in\mathbb Z_{>0}$.
Then $\operatorname{SU}(p)=\omega$ relative to $\DCFA$.
\end{cor}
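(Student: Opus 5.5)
The statement has two halves: $\operatorname{SU}(p)\leq\omega$ and $\operatorname{SU}(p)\geq\omega$. I will prove them separately.

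\emph{Upper bound.} Let $q\in S_{x,\sigma\text{-}\Delta}(K)$ be any completion of $p$, realized by some $b$. Then $\operatorname{tp}_\Delta(b/K)=p$ is minimal. Corollary~\ref{cor:at-most-oemga} gives $\operatorname{SU}(b/K)\leq\omega$, so $\operatorname{SU}(q)\leq\omega$. Taking the supremum over all completions gives $\operatorname{SU}(p)\leq\omega$. Corollary~\ref{cor:at-most-oemga} only requires $K$ to be an inversive $\sigma$-$\Delta$-field, which is our standing convention, so this step is routine.

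\emph{Lower bound.} Proposition~\ref{prop:groups-as-infinite-rank} applies directly: $G$ is connected and minimal (its generic type $p$ is minimal), and $f\colon G\to G^{\sigma^\ell}$ is a definable surjective homomorphism. So $\operatorname{SU}(G)\geq\omega$ relative to $\DCFA$. I then need to transfer this rank from the set $G$ to a completion of $p$. Since $\operatorname{SU}(G)$ is the supremum of $\operatorname{SU}(q')$ over types $q'$ over $K$ containing the formula $x\in G$, for each $N$ there is $c\in G$ with $\operatorname{SU}(c/K)\geq N$.

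Next I have to see that such a $c$ realizes $p$ in the $\Delta$-reduct. The realizations of $G$ whose $\Delta$-type over $K$ is not the generic $p$ are the points of $G$ lying in a proper $K$-definable ($\DCF$) subset. By minimality these are algebraic over $K$ (more precisely over $K^{\operatorname{alg}}$), so they have $\operatorname{SU}$-rank $0$. Hence any $c$ with $\operatorname{SU}(c/K)\geq 1$ has $\operatorname{tp}_\Delta(c/K)=p$, and $\operatorname{tp}(c/K)$ is a completion of $p$. This yields completions of $p$ of every finite rank, and so $\operatorname{SU}(p)\geq\omega$. Alternatively, a single completion of rank exactly $\omega$ exists by compactness, but the supremum definition makes this unnecessary.

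\emph{Main obstacle.} The only delicate point is this transfer. Proposition~\ref{prop:groups-as-infinite-rank} works over some set of parameters, possibly larger than $K$, and over arbitrary algebraically closed fields. If its proof gives the rank bound over a larger field $L$, I will use that every nonalgebraic point of $G$ over $L$ has $\Delta$-type the nonforking extension of $p$. Passing from $L$ back to $K$ can only increase $\operatorname{SU}$-rank, since restricting the base to a smaller set does not decrease rank. Concretely, the chain $H_N$ gives elements of $\operatorname{SU}$-rank at least $N$ over the parameters of the $H_N$. These elements are nonalgebraic, so their $\Delta$-types over $K$ equal $p$, which closes the argument.
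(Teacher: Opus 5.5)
Your proof is correct and follows the paper's structure. The upper bound comes from Corollary~\ref{cor:at-most-oemga} applied to each completion. The lower bound comes from Proposition~\ref{prop:groups-as-infinite-rank} together with the fact that every point of $G$ outside $K^{\operatorname{alg}}$ has $\Delta$-type $p$. That fact rests on connectedness of $G$, which makes $G$ strongly minimal; minimality of $p$ alone does not give it.

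The only difference is the transfer step. The paper uses that a $\DCFA$-generic type of $G$ has $\operatorname{SU}$-rank $\operatorname{SU}(G)$. You instead read off completions of $p$ of arbitrarily large finite rank from the definition of $\operatorname{SU}(G)$ as a supremum, which avoids the theory of generics in supersimple groups.
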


\begin{proof}
Since $G$ is connected and the 
$\Delta$-type $p$ is minimal, $G$ is strongly minimal relative
to $\DCF$. Thus $p$ is the unique nonalgebraic
$\Delta$-type over $K$  on $G$.

Let $q$ be a generic type of $G$ over $K$ relative
to $\DCFA$. Its $\Delta$-restriction is nonalgebraic:
otherwise, $q$ would contain a $\Delta$-formula
over $K$ with finitely many solutions and would
itself be algebraic. Consequently, $q$ extends $p$.

Generic types of a definable group in a supersimple
theory have the same $\operatorname{SU}$-rank as
the group. Hence Proposition~\ref{prop:groups-as-infinite-rank}
gives
\[
\operatorname{SU}(p)
\geq \operatorname{SU}(q)
=\operatorname{SU}(G)
\geq\omega.
\]
On the other hand, let $q$ be any complete
$\sigma$-$\Delta$-type over $K$ extending $p$,
and  $b\models q$. Since
$\operatorname{tp}_\Delta(b/K)=p$ is minimal,
Corollary~\ref{cor:at-most-oemga} gives
$\operatorname{SU}(q)=\operatorname{SU}(b/K)\leq\omega$.
Taking the supremum over all such completions $q$,
we obtain $\operatorname{SU}(p)\leq\omega$.
Therefore $\operatorname{SU}(p)=\omega$.
\end{proof}

\begin{remark}
The equality $\operatorname{SU}(p)=\omega$ concerns
$p$ as a partial $\sigma$-$\Delta$-type.
It does not assert that every complete
$\sigma$-$\Delta$-type extending $p$ has rank $\omega$.
In particular, it does not necessarily imply
$\operatorname{SU}(\operatorname{tp}_{\sigma\text{-}\Delta}(a/K))=\omega$
for the particular realization $a$ used to specify $p$.
However, the proof does show that some completion of $p$
has rank $\omega$.
\end{remark}

\begin{cor}
\label{cor:generic-constant-as-sigma}
Let \(p \in S_\Delta(K)\) be the generic type of the 
field of constants.  That is \(p\) is axiomatized by 
\(\delta x = 0\) for \(\delta \in \Delta\) and 
\(g(x)\neq 0\) for every nonzero $g\in K[x]$.  Then \(p\) is
minimal relative to \(\DCF\)  and \(\operatorname{SU}(p) = \omega\) 
relative to \(\DCFA\). 
\end{cor}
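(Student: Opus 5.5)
The plan is to deduce this from Corollary~\ref{cor:rank-omega-type}, with \(G=\mathsf{C}=\mathbb{G}_a(\mathsf C)\) the additive group of constants, which is the natural candidate.

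\emph{Minimality relative to \(\DCF\).} The type \(p\) is complete: it is generated by \(\delta x=0\) together with the inequations \(g(x)\neq 0\). By quantifier elimination, every \(\Delta\)-polynomial in \(x\) reduces modulo \(\delta x=0\) to an ordinary polynomial in \(x\). For an algebraically closed \(L\supseteq K\), a nonalgebraic extension of \(p\) to \(L\) must therefore satisfy \(g(x)\neq0\) for every nonzero \(g\in L[x]\), so it is unique. Hence \(p\) is minimal. Equivalently, one can note that \(\mathsf C\) is strongly minimal, as in the example after Definition~\ref{def:strongly-minimal}, and that \(p\) is its generic type. If \(K\) is not algebraically closed, this argument is run over \(K^{\alg}\) and then transferred using uniqueness.

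\emph{Verifying the group hypotheses.} Set \(G:=\mathsf C\), defined by \(\delta_i x=0\) for all \(i\). It is a \(\emptyset\)-definable subgroup of \((\mathbb U,+)\). It is connected, because it is strongly minimal and a strongly minimal group has no proper definable subgroups of finite index; indeed any such subgroup would be infinite and cofinite, and then its finitely many cosets would give a contradiction. Since \(G\) is defined over \(\mathbb Q\), we have \(G^{\sigma}=G\). Take \(\ell=1\) and \(f=\operatorname{id}\colon G\to G^{\sigma}\), which is a \(\DCF\)-definable surjective homomorphism. The generic type of \(G\) over \(K\) is exactly \(p\).

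\emph{Conclusion.} Corollary~\ref{cor:rank-omega-type} now gives \(\operatorname{SU}(p)=\omega\) relative to \(\DCFA\). The only point needing care is that Corollary~\ref{cor:rank-omega-type} asks for \(p=\operatorname{tp}_\Delta(a/K)\) with \(a\) a realization; we take \(a\) to be any constant transcendental over \(K\). It is also worth noting why the rank is \(\omega\) here: in the proof of Proposition~\ref{prop:groups-as-infinite-rank}, \(\psi(x)=x-\sigma^{-1}(x)\), so \(H_1\) is the fixed field of \(\sigma\) inside \(\mathsf C\) and \(H_N\) is the iterated kernel of \(\psi\). This is consistent with the introductory example that \(\sigma(x)=x\) cuts out an infinite, co-infinite subset of \(\mathsf C\).
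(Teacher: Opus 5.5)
Your proposal is correct and matches the paper's proof, which is exactly the one-line application of Corollary~\ref{cor:rank-omega-type} to the additive group of constants with \(\ell = 1\) and \(f = \operatorname{id}\). Your extra checks (completeness and minimality of \(p\), connectedness of \(\mathsf{C}\), and \(G^{\sigma} = G\)) make explicit the hypotheses that the paper leaves implicit.
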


\begin{proof}
Apply Corollary~\ref{cor:rank-omega-type} to the 
group \(G = (\{ x \in \mathbb{U} : \delta x = 0 
\text{ for all } \delta \in \Delta \}, +)\) and the definable homomorphism 
\(f:G \to G\) being the identity map given by \(x \mapsto x\).
\end{proof}

For the next proposition, the reader should  
consult~\cite{Marker-manin-kernels} for the theory of 
Manin kernels.  As this theory is worked out in the 
literature only in the case of \(m=1\), we restrict to 
$\Delta=\{\delta\}$.

\begin{theorem}
 \label{prop:Manin-kernel-isogenous}
Let $K$ be an algebraically closed $\sigma$-$\delta$-field.
Suppose that $A$ is a simple
abelian variety over $K$ that does not descend to
the constants; that is, $A$ is not isomorphic over
$\mathbb U$ to an abelian variety defined over
$\mathsf C$.
Let $p$ be the generic differential type of
the Manin kernel $A^\sharp$ over $K$.

Then  relative to $\DCFA$, either $p$ remains minimal
or $\operatorname{SU}(p)=\omega$.
The first case occurs if and only if there is no
positive integer $\ell$ and isogeny
\(f:A\longrightarrow A^{\sigma^\ell}\).
 In that case, $p$ has a unique complete quantifier-free
$\sigma$-$\delta$-extension over $K$, this extension
is minimal, and every full completion of $p$ has
$\operatorname{SU}$-rank one.
\end{theorem}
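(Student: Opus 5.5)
We split into two cases according to whether there exist \(\ell>0\) and an isogeny \(f:A\to A^{\sigma^\ell}\), and use two facts about Manin kernels from~\cite{Marker-manin-kernels}. First, \(A^\sharp\) is a connected, minimal (indeed strongly minimal when \(A\) does not descend to \(\mathsf C\)) definable subgroup of \(A(\mathbb U)\). Second, for simple \(A,B\) not descending to the constants, the generic types of \(A^\sharp\) and \(B^\sharp\) are nonorthogonal if and only if \(A\) and \(B\) are isogenous; moreover, any such isogeny restricts to a definable surjective homomorphism \(A^\sharp\to B^\sharp\), by functoriality of \(\sharp\) and connectedness. We also note that \((A^{\sigma^\ell})^\sharp=(A^\sharp)^{\sigma^\ell}\), since \(\sigma\) commutes with \(\delta\), and that \(p^{\sigma^\ell}\) is the generic type of \((A^\sigma{}^\ell)^\sharp\).

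\emph{Isogeny case.} Suppose an isogeny \(f:A\to A^{\sigma^\ell}\) exists. Its restriction \(f|_{A^\sharp}:A^\sharp\to (A^\sharp)^{\sigma^\ell}\) is a \(\DCF\)-definable homomorphism with finite kernel. Its image is an infinite definable subgroup of the connected minimal group \((A^\sharp)^{\sigma^\ell}\), so it is all of it and the map is surjective. The hypotheses of Corollary~\ref{cor:rank-omega-type} therefore hold with \(G=A^\sharp\), and we get \(\operatorname{SU}(p)=\omega\). Here \(p\) is the minimal generic type, and it is minimal by strong minimality of \(A^\sharp\).

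\emph{No-isogeny case.} Suppose no isogeny \(A\to A^{\sigma^\ell}\) exists for any \(\ell>0\). By the nonorthogonality criterion, \(p\perp p^{\sigma^\ell}\) for every \(\ell>0\). Here \(A^{\sigma^\ell}\) is again simple and does not descend, since \(\sigma^\ell\) is an automorphism of \(\mathbb U\) as a \(\delta\)-field. Proposition~\ref{prop:automatic-sigma-hypertranscendence} then applies directly. It gives that \(p\) has a unique complete quantifier-free \(\sigma\)-\(\delta\)-extension over \(K\) and that this extension is minimal. For a full completion \(q\), we argue as in the proof of that proposition that any realization \(a\) with \(a\nind^{\sigma\text{-}\delta}_K L\) is algebraic over \(L\). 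By Remark~\ref{rk:SU-rank-DCFA} and Lemma~\ref{lem:still-non-algebraic}, this yields \(\operatorname{SU}(q)=1\). This establishes the ``if'' direction of the biconditional.

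\emph{Converse and the dichotomy.} The cases are mutually exclusive, since rank \(\omega\) is not rank \(1\). The isogeny case gives rank \(\omega\), and the no-isogeny case gives minimality. So minimality holds exactly when there is no isogeny, which proves the ``only if'' direction as well.

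The main obstacle is justifying the nonorthogonality criterion, namely that nonorthogonality of the Manin kernel generic types forces an isogeny \(A\to A^{\sigma^\ell}\) and not merely some definable correspondence. My plan is to use the standard fact that a finite-to-finite definable correspondence between the modular minimal groups \(A^\sharp\) and \(B^\sharp\) yields, via a coset of a connected definable subgroup of \(A^\sharp\times B^\sharp\) (by one-basedness and local modularity), a definable isogeny \(A^\sharp\to B^\sharp\). Kolchin-density of \(A^\sharp\) in \(A\) then lifts this to an algebraic isogeny \(A\to B\); this is the content of the relevant results in~\cite{Marker-manin-kernels}, which I will cite. A secondary point is checking the surjectivity and connectedness claims for \(f|_{A^\sharp}\), which follow from strong minimality.
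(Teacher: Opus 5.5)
Your proposal is correct and follows essentially the same route as the paper. Like you, the paper cites strong minimality of $A^\sharp$ and the criterion ``$p\not\perp p^{\sigma^\ell}$ iff $A$ and $A^{\sigma^\ell}$ are isogenous'' (it uses \cite[Facts~5.1 and~5.6]{Casale-Freitag-Nagloo-I}), then applies Proposition~\ref{prop:automatic-sigma-hypertranscendence} together with the same nonforking argument for full completions in the no-isogeny case, and restricts the isogeny to $A^\sharp$ and invokes Corollary~\ref{cor:rank-omega-type} in the isogeny case. The differences are cosmetic: the paper obtains $f(A^\sharp)\subseteq (A^{\sigma^\ell})^\sharp$ from $A^\sharp$ being the smallest definable subgroup containing the torsion rather than from functoriality, and notes via Chow's theorem that $f$ is defined over $K$.
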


\begin{proof}
By \cite[Fact~5.1]{Casale-Freitag-Nagloo-I},
$A^\sharp$ is strongly minimal.
In particular, $p$ is minimal and stationary.
For each $\ell>0$, the generic $\Delta$-type of
$(A^{\sigma^\ell})^\sharp=
(A^\sharp)^{\sigma^\ell}$
is $p^{\sigma^\ell}$.
By \cite[Fact~5.6]{Casale-Freitag-Nagloo-I},
for every $\ell>0$,
\[
p\not\perp p^{\sigma^\ell}
\quad\Longleftrightarrow\quad
A\text{ and }A^{\sigma^\ell}\text{ are isogenous}.
\]

Suppose first that no such isogeny exists. 
Then $p\perp p^{\sigma^\ell}$ for every $\ell>0$.
By Proposition~\ref{prop:automatic-sigma-hypertranscendence}, $p$ has a unique complete quantifier-free
$\sigma$-$\delta$-extension, and this extension
is minimal.  It remains to show that every full completion of $p$
has $\operatorname{SU}$-rank one.
Let $q$ be such a completion, let $a\models q$, and
let $E\supseteq K$ be a small parameter set such that
$\operatorname{tp}_{\sigma\text{-}\delta}(a/E)$
is nonalgebraic. Set
\(
L:=\operatorname{acl}^{\sigma\text{-}\delta}(E).
\)
By \cite[Proposition~3.1(i)]{LS-DCFAm}, $L$ is an
algebraically closed $\sigma$-$\delta$-field.
Moreover, $a$ is nonalgebraic over $L$.
The proof of
Proposition~\ref{prop:automatic-sigma-hypertranscendence}
therefore gives
\(
a\ind^{\sigma\text{-}\delta}_K L,
\)
and hence $a\ind^{\sigma\text{-}\delta}_K E$.
Thus every nonalgebraic extension of $q$ is
nonforking, so every forking extension of $q$ is
algebraic. Since $q$ itself is nonalgebraic,
$\operatorname{SU}(q)=1$.

Suppose now that there is an isogeny
$f:A\to A^{\sigma^\ell}$ for some $\ell>0$, which necessarily will be defined over \(K\)~\cite{Chow-AV}. 
We show that $f$ restricts to a definable surjective
homomorphism $A^\sharp\to (A^{\sigma^\ell})^\sharp$.

Since   $f$ sends $\operatorname{Tor}(A)$, the set of torsion points of $A$, to $\operatorname{Tor}(A^{\sigma^\ell})$,  
$\operatorname{Tor}(A)\subseteq f^{-1}((A^{\sigma^\ell})^\sharp)$.
As the Manin kernel $A^\sharp$ is the  smallest $\Delta$-definable subgroup containing $\operatorname{Tor}(A)$\cite[Lemma~3.3(i)]{BBP-semiabelian}, \(A^\sharp\subseteq f^{-1}((A^{\sigma^\ell})^\sharp)\),  and thus
\(f(A^\sharp)\subseteq (A^{\sigma^\ell})^\sharp.
\)
 Since $f$ has finite kernel and $A^\sharp$ is
infinite, $f(A^\sharp)$ is an infinite $\Delta$-definable subgroup of $(A^{\sigma^\ell})^\sharp$.
Strong minimality of $(A^{\sigma^\ell})^\sharp$ therefore gives $f(A^\sharp)=(A^{\sigma^\ell})^\sharp$.

We have therefore obtained a $K$-definable
surjective homomorphism
\[
f|_{A^\sharp}:
A^\sharp\longrightarrow (A^{\sigma^\ell})^\sharp=
 (A^\sharp)^{\sigma^\ell}.
\]
The strongly minimal group $A^\sharp$ is connected:
a proper definable subgroup of finite index would
be infinite and coinfinite.
Corollary~\ref{cor:rank-omega-type} now gives
$\operatorname{SU}(p)=\omega$.

\end{proof}

\bibliographystyle{siam}
\bibliography{ode-as-diff-eq}

@article {ADH-JAMS,
    AUTHOR = {Adamczewski, Boris and Dreyfus, Thomas and Hardouin,
              Charlotte},
     TITLE = {Hypertranscendence and linear difference equations},
   JOURNAL = {J. Amer. Math. Soc.},
  FJOURNAL = {Journal of the American Mathematical Society},
    VOLUME = {34},
      YEAR = {2021},
    NUMBER = {2},
     PAGES = {475--503},
      ISSN = {0894-0347,1088-6834},
   MRCLASS = {12H05 (11J81 39A06)},
  MRNUMBER = {4280865},
MRREVIEWER = {Russell\ Jay\ Hendel},
       DOI = {10.1090/jams/960},
       URL = {https://doi.org/10.1090/jams/960},
}

@article {Chow-AV,
    AUTHOR = {Chow, Wei-Liang},
     TITLE = {Abelian varieties over function fields},
   JOURNAL = {Trans. Amer. Math. Soc.},
  FJOURNAL = {Transactions of the American Mathematical Society},
    VOLUME = {78},
      YEAR = {1955},
     PAGES = {253--275},
      ISSN = {0002-9947,1088-6850},
   MRCLASS = {14.0X},
  MRNUMBER = {67527},
MRREVIEWER = {P.\ Samuel},
       DOI = {10.2307/1993062},
       URL = {https://doi.org/10.2307/1993062},
}

@article{Nagloo–Pillay-2017,
  title={On algebraic relations between solutions of a generic {P}ainlev{\'{e}} equation},
  author={Nagloo, Joel and Pillay, Anand},
  journal={Journal f{\"u}r die reine und angewandte Mathematik (Crelles Journal)},
  volume={726},
  pages={1--27},
  year={2017},
  publisher={De Gruyter}
}

@article {Pillay-foundational,
    AUTHOR = {Pillay, Anand},
     TITLE = {Some foundational questions concerning differential algebraic
              groups},
   JOURNAL = {Pacific J. Math.},
  FJOURNAL = {Pacific Journal of Mathematics},
    VOLUME = {179},
      YEAR = {1997},
    NUMBER = {1},
     PAGES = {179--200},
      ISSN = {0030-8730,1945-5844},
   MRCLASS = {12H05},
  MRNUMBER = {1452531},
MRREVIEWER = {Alexandru\ Buium},
       DOI = {10.2140/pjm.1997.179.179},
       URL = {https://doi.org/10.2140/pjm.1997.179.179},
}

@article {Reineke,
    AUTHOR = {Reineke, Joachim},
     TITLE = {Minimale {G}ruppen},
   JOURNAL = {Z. Math. Logik Grundlagen Math.},
  FJOURNAL = {Zeitschrift f\"ur Mathematische Logik und Grundlagen der
              Mathematik},
    VOLUME = {21},
      YEAR = {1975},
    NUMBER = {4},
     PAGES = {357--359},
      ISSN = {0044-3050},
   MRCLASS = {02H15 (02G20 20K99)},
  MRNUMBER = {379179},
MRREVIEWER = {U.\ Felgner},
       DOI = {10.1002/malq.19750210145},
       URL = {https://doi.org/10.1002/malq.19750210145},
}

@article {Sh-dcf,
    AUTHOR = {Shelah, Saharon},
     TITLE = {Differentially closed fields},
   JOURNAL = {Israel J. Math.},
  FJOURNAL = {Israel Journal of Mathematics},
    VOLUME = {16},
      YEAR = {1973},
     PAGES = {314--328},
      ISSN = {0021-2172},
   MRCLASS = {02H15 (12H05)},
  MRNUMBER = {344116},
MRREVIEWER = {Carol\ Wood},
       DOI = {10.1007/BF02756711},
       URL = {https://doi.org/10.1007/BF02756711},
}

@article {Kolchin-constrained,
    AUTHOR = {Kolchin, E. R.},
     TITLE = {Constrained extensions of differential fields},
   JOURNAL = {Advances in Math.},
  FJOURNAL = {Advances in Mathematics},
    VOLUME = {12},
      YEAR = {1974},
     PAGES = {141--170},
      ISSN = {0001-8708},
   MRCLASS = {12H05 (02H15)},
  MRNUMBER = {340227},
MRREVIEWER = {Jerald\ Kovacic},
       DOI = {10.1016/S0001-8708(74)80001-0},
       URL = {https://doi.org/10.1016/S0001-8708(74)80001-0},
}

@article {Chatzidakis-generic,
    AUTHOR = {Chatzidakis, Zo\'e},
     TITLE = {Generic automorphisms of separably closed fields},
   JOURNAL = {Illinois J. Math.},
  FJOURNAL = {Illinois Journal of Mathematics},
    VOLUME = {45},
      YEAR = {2001},
    NUMBER = {3},
     PAGES = {693--733},
      ISSN = {0019-2082,1945-6581},
   MRCLASS = {03C60 (03C45 12H10 12L12)},
  MRNUMBER = {1879231},
MRREVIEWER = {G.\ Cherlin},
       URL = {http://projecteuclid.org/euclid.ijm/1258138147},
}

@article {Robinson-DCF,
    AUTHOR = {Robinson, Abraham},
     TITLE = {On the concept of a differentially closed field},
   JOURNAL = {Bull. Res. Council Israel Sect. F},
  FJOURNAL = {Bulletin of the Research Council of Israel. Section F},
    VOLUME = {8F},
      YEAR = {1959},
     PAGES = {113--128},
   MRCLASS = {02.50 (12.80)},
  MRNUMBER = {125016},
MRREVIEWER = {R.\ C.\ Lyndon},
}

@article {McGrail-JSL,
    AUTHOR = {McGrail, Tracey},
     TITLE = {The model theory of differential fields with finitely many
              commuting derivations},
   JOURNAL = {J. Symbolic Logic},
  FJOURNAL = {The Journal of Symbolic Logic},
    VOLUME = {65},
      YEAR = {2000},
    NUMBER = {2},
     PAGES = {885--913},
      ISSN = {0022-4812,1943-5886},
   MRCLASS = {03C60 (03C10 03C45 12H05)},
  MRNUMBER = {1771092},
MRREVIEWER = {G.\ Cherlin},
       DOI = {10.2307/2586576},
       URL = {https://doi.org/10.2307/2586576},
}

@phdthesis{Blum-thesis,
    AUTHOR = {Blum, Lenore Carol},
     TITLE = {Generalized algebraic theories: A model theoretic
              approach},
      school = {Massachusetts Institute of Technology},
 PUBLISHER = {ProQuest LLC, Ann Arbor, MI},
      YEAR = {1968},
   MRCLASS = {99-05},
  MRNUMBER = {2939925},
       URL =
              {\url{http://gateway.proquest.com/openurl?url_ver=Z39.88-2004&rft_val_fmt=info:ofi/fmt:kev:mtx:dissertation&res_dat=xri:pqdiss&rft_dat=xri:pqdiss:0221003}},
}

@article {Yaffe,
    AUTHOR = {Yaffe, Yoav},
     TITLE = {Model completion of {L}ie differential fields},
   JOURNAL = {Ann. Pure Appl. Logic},
  FJOURNAL = {Annals of Pure and Applied Logic},
    VOLUME = {107},
      YEAR = {2001},
    NUMBER = {1-3},
     PAGES = {49--86},
      ISSN = {0168-0072,1873-2461},
   MRCLASS = {03C60 (03C10 12H05)},
  MRNUMBER = {1807840},
       DOI = {10.1016/S0168-0072(00)00025-7},
       URL = {https://doi.org/10.1016/S0168-0072(00)00025-7},
}

@book {Böger-diplom,
    AUTHOR = {B\"{o}ger, Jens},
     TITLE = {Modelltheorie von {K}\"{o}rpern mit paarweise kommutierenden {D}erivationen},
      NOTE = {Diplomarbeit},
    PUBLISHER = {Universit\"{a}t Freiburg},
      YEAR = {1996},
     PAGES = {58 pages},
       URL =  {\url{https://logik.mathematik.uni-freiburg.de/forschung/preprints/Diplom/boeger-dipl.ps}},
}

@book {Bustamante-thesis,
AUTHOR = {Bustamante Medina, Ronald},
TITLE = {Th\'{e}orie des mod\`{e}les des corps diff\'{e}rentiellement clos avec un automorphisme g\'{e}n\'{e}rique},
YEAR = {2005},
NOTE = {Th\`{e}se de doctorat},
PUBLISHER = {Universit\'{e} Paris 7},
PAGES  = {88 pages},
}

@article{Baldwin-Harrington,
title = {Trivial pursuit: Remarks on the main gap},
journal = {Annals of Pure and Applied Logic},
volume = {34},
number = {3},
pages = {209--230},
year = {1987},
issn = {0168-0072},
doi = {10.1016/0168-0072(87)90001-7},
url = {https://www.sciencedirect.com/science/article/pii/0168007287900017},
author = {John T. Baldwin and Leo Harrington}
}

@unpublished{Hrushovski-Sokolović,
    author = {Hrushovski, Ehud and Sokolovi{\'{c}}, {\v{Z}}eljko},
    title = { Minimal subsets of differentially closed fields}, 
    YEAR = {1994} ,
    note = {unpublished manuscript}
}

@article {Pillay-Ziegler-arcs,
    AUTHOR = {Pillay, Anand and Ziegler, Martin},
     TITLE = {Jet spaces of varieties over differential and difference
              fields},
   JOURNAL = {Selecta Math. (N.S.)},
  FJOURNAL = {Selecta Mathematica. New Series},
    VOLUME = {9},
      YEAR = {2003},
    NUMBER = {4},
     PAGES = {579--599},
      ISSN = {1022-1824,1420-9020},
   MRCLASS = {12H05},
  MRNUMBER = {2031753},
MRREVIEWER = {Mykola\ Ya.\ Komarnitski\u i},
       DOI = {10.1007/s00029-003-0339-1},
       URL = {https://doi.org/10.1007/s00029-003-0339-1},
}

@article {HrIt,
    AUTHOR = {Hrushovski, E. and Itai, M.},
     TITLE = {On model complete differential fields},
   JOURNAL = {Trans. Amer. Math. Soc.},
  FJOURNAL = {Transactions of the American Mathematical Society},
    VOLUME = {355},
      YEAR = {2003},
    NUMBER = {11},
     PAGES = {4267--4296},
      ISSN = {0002-9947,1088-6850},
   MRCLASS = {03C60 (12H05)},
  MRNUMBER = {1990753},
MRREVIEWER = {Wai\ Yan\ Pong},
       DOI = {10.1090/S0002-9947-03-03264-1},
       URL = {https://doi.org/10.1090/S0002-9947-03-03264-1},
}

@unpublished{Marker-sm-slides,
    author = {Marker, David},
    title = {Strong minimal sets in differentially closed fields:
Equations of {P}oizat and {L}ienard type},
    note = {\url{https://homepages.math.uic.edu/~marker/IPM-np.pdf}},
    YEAR = {2022},}

@InCollection{Marker-dcf,
 Author = {Marker, David},
 Title = {Model theory of differential fields},
 BookTitle = {Model theory, algebra, and geometry},
 ISBN = {0-521-78068-3},
 Pages = {53--63},
 Year = {2000},
 Publisher = {Cambridge: Cambridge University Press},
 Language = {English},
 URL = {www.msri.org/communications/books/Book39/},
 zbMATH = {1531889},
 Zbl = {0961.03033}
}

@article {Rosenlicht,
    AUTHOR = {Rosenlicht, Maxwell},
     TITLE = {The nonminimality of the differential closure},
   JOURNAL = {Pacific J. Math.},
  FJOURNAL = {Pacific Journal of Mathematics},
    VOLUME = {52},
      YEAR = {1974},
     PAGES = {529--537},
      ISSN = {0030-8730,1945-5844},
   MRCLASS = {12H05 (02H15)},
  MRNUMBER = {352068},
MRREVIEWER = {G.\ Cherlin},
       URL = {http://projecteuclid.org/euclid.pjm/1102911980},
}

@article {vanderput2001,
    AUTHOR = {van der Put, Marius},
     TITLE = {Grothendieck's conjecture for the {R}isch equation
              {$y'=ay+b$}},
   JOURNAL = {Indag. Math. (N.S.)},
  FJOURNAL = {Koninklijke Nederlandse Akademie van Wetenschappen.
              Indagationes Mathematicae. New Series},
    VOLUME = {12},
      YEAR = {2001},
    NUMBER = {1},
     PAGES = {113--124},
      ISSN = {0019-3577,1872-6100},
   MRCLASS = {12H99 (11R58 14H05)},
  MRNUMBER = {1908143},
MRREVIEWER = {Lucia\ Di Vizio},
       DOI = {10.1016/S0019-3577(01)80009-4},
       URL = {https://doi.org/10.1016/S0019-3577(01)80009-4},
}

@article {bostan2024,
    AUTHOR = {Bostan, Alin and Caruso, Xavier and Roques, Julien},
     TITLE = {Algebraic solutions of linear differential equations: an
              arithmetic approach},
   JOURNAL = {Bull. Amer. Math. Soc. (N.S.)},
  FJOURNAL = {American Mathematical Society. Bulletin. New Series},
    VOLUME = {61},
      YEAR = {2024},
    NUMBER = {4},
     PAGES = {609--658},
      ISSN = {0273-0979,1088-9485},
   MRCLASS = {11-02 (05A15 12H25 33C20 34A30 34M15 68W30)},
  MRNUMBER = {4803603},
MRREVIEWER = {Alexander\ B.\ Levin},
       DOI = {10.1090/bull/1835},
       URL = {https://doi.org/10.1090/bull/1835},
}

@article {Cherlin1985,
    AUTHOR = {Cherlin, G. and Harrington, L. and Lachlan, A. H.},
     TITLE = {{$\aleph_0$}-categorical, {$\aleph_0$}-stable structures},
   JOURNAL = {Ann. Pure Appl. Logic},
  FJOURNAL = {Annals of Pure and Applied Logic},
    VOLUME = {28},
      YEAR = {1985},
    NUMBER = {2},
     PAGES = {103--135},
      ISSN = {0168-0072,1873-2461},
   MRCLASS = {03C35 (03C45)},
  MRNUMBER = {779159},
MRREVIEWER = {B.\ I.\ Zil\cprime ber},
       DOI = {10.1016/0168-0072(85)90023-5},
       URL = {https://doi.org/10.1016/0168-0072(85)90023-5},
}

@book {Suer2007,
    AUTHOR = {Suer, Sonat},
     TITLE = {Model theory of differentially closed fields with several
              commuting derivations},
      NOTE = {Thesis (Ph.D.)--University of Illinois at Urbana-Champaign},
 PUBLISHER = {ProQuest LLC, Ann Arbor, MI},
      YEAR = {2007},
     PAGES = {55},
      ISBN = {978-0549-34391-2},
   MRCLASS = {99-05},
  MRNUMBER = {2711111},
       URL =
              {http://gateway.proquest.com/openurl?url_ver=Z39.88-2004&rft_val_fmt=info:ofi/fmt:kev:mtx:dissertation&res_dat=xri:pqdiss&rft_dat=xri:pqdiss:3290394},
}

@article {FS2018,
    AUTHOR = {Freitag, James and Scanlon, Thomas},
     TITLE = {Strong minimality and the {$j$}-function},
   JOURNAL = {J. Eur. Math. Soc. (JEMS)},
  FJOURNAL = {Journal of the European Mathematical Society (JEMS)},
    VOLUME = {20},
      YEAR = {2018},
    NUMBER = {1},
     PAGES = {119--136},
      ISSN = {1435-9855,1435-9863},
   MRCLASS = {03C60 (03C45 12H05 33E30)},
  MRNUMBER = {3743238},
MRREVIEWER = {G.\ Cherlin},
       DOI = {10.4171/JEMS/761},
       URL = {https://doi.org/10.4171/JEMS/761},
}

@unpublished{GR-dcfam,
    author = {Guzy, Nicolas and Rivi\`{e}re, C\'{e}dric},
    title = {On existentially closed partial differential fields with an automorphism},
    YEAR = {2007},
    note = {unpublished manuscript}
}

@article {LS-DCFAm,
    AUTHOR = {Le\'on S\'anchez, Omar},
     TITLE = {On the model companion of partial differential fields with an
              automorphism},
   JOURNAL = {Israel J. Math.},
  FJOURNAL = {Israel Journal of Mathematics},
    VOLUME = {212},
      YEAR = {2016},
    NUMBER = {1},
     PAGES = {419--442},
      ISSN = {0021-2172,1565-8511},
   MRCLASS = {03C60 (12H05)},
  MRNUMBER = {3504332},
MRREVIEWER = {David\ A.\ Pierce},
       DOI = {10.1007/s11856-016-1292-y},
       URL = {https://doi.org/10.1007/s11856-016-1292-y},
}

@article {Hr-MM,
    AUTHOR = {Hrushovski, Ehud},
     TITLE = {The {M}anin-{M}umford conjecture and the model theory of
              difference fields},
   JOURNAL = {Ann. Pure Appl. Logic},
  FJOURNAL = {Annals of Pure and Applied Logic},
    VOLUME = {112},
      YEAR = {2001},
    NUMBER = {1},
     PAGES = {43--115},
      ISSN = {0168-0072,1873-2461},
   MRCLASS = {03C60 (11G10 11U09 12L12)},
  MRNUMBER = {1854232},
MRREVIEWER = {C.\ Toffalori},
       DOI = {10.1016/S0168-0072(01)00096-3},
       URL = {https://doi.org/10.1016/S0168-0072(01)00096-3},
}

@book{Wagner-simple,
 author = {Wagner, Frank O.},
 title = {Simple theories},
 fseries = {Mathematics and its Applications (Dordrecht)},
 series = {Math. Appl., Dordr.},
 issn = {0921-3791},
 volume = {503},
 isbn = {0-7923-6221-7},
 year = {2000},
 publisher = {Dordrecht: Kluwer Academic Publishers},
 language = {English},
 zbMATH = {1425669},
 Zbl = {0948.03032}
}

@article{Casale-Freitag-Nagloo-I,
 author = {Casale, Guy and Freitag, James and Nagloo, Joel},
 title = {Ax-{Lindemann}-{Weierstrass} with derivatives and the genus 0 {Fuchsian} groups},
 fjournal = {Annals of Mathematics. Second Series},
 journal = {Ann. Math. (2)},
 issn = {0003-486X},
 volume = {192},
 number = {3},
 pages = {721--765},
 year = {2020},
 language = {English},
 doi = {10.4007/annals.2020.192.3.2},
 zbMATH = {7285353},
 Zbl = {1465.11094}
}

@incollection{Marker-manin-kernels,
 author = {Marker, David},
 title = {Manin kernels},
 booktitle = {Connections between model theory and algebraic and analytic geometry},
 isbn = {88-7999-312-7},
 pages = {1--21},
 year = {2000},
 publisher = {Rome: Aracne},
 language = {English},
 zbMATH = {1679769},
 Zbl = {1100.14522}
}

@article{KimPillay1997,
  title={Simple theories},
  author={Kim, Byunghan and Pillay, Anand},
  journal={Annals of Pure and Applied Logic},
  volume={88},
  number={2-3},
  pages={149--164},
  year={1997},
  publisher={Elsevier}
}

@article{LS-Zilber-dichotomy,
 author = {Le{\'o}n S{\'a}nchez, Omar},
 title = {Zilber dichotomy for {{\(\mathrm{DCF}_{0, m}\)}}},
 fjournal = {Fundamenta Mathematicae},
 journal = {Fundam. Math.},
 issn = {0016-2736},
 volume = {268},
 number = {3},
 pages = {285--293},
 year = {2025},
 language = {English},
 doi = {10.4064/fm241107-14-1},
 zbMATH = {8028702}
}

@article{BBP-semiabelian,
  title={Semiabelian varieties over separably closed fields, maximal divisible subgroups, and exact sequences},
  author={Benoist, Franck and Bouscaren, Elisabeth and Pillay, Anand},
  journal={Journal of the Institute of Mathematics of Jussieu},
  volume={15},
  number={1},
  pages={29--69},
  year={2016},
  publisher={Cambridge University Press}
}
\end{document}